\newtheorem{theorem}{Theorem}[section]
\newtheorem{proposition}{Proposition}[section]
\newtheorem{corollary}{Corollary}[section]
\newtheorem{conjecture}{Conjecture}[section]
\theoremstyle{definition}
\newtheorem{definition}{Definition}[section]
\newtheorem{remark}{Remark}[section]
\numberwithin{equation}{section}
\begin{document}

\title{$3$-Principalization over $S_3$-fields}

\author{S. Aouissi}
\author{M. Talbi}
\author{D. C. Mayer}
\author{M. C. Ismaili}

\thanks{Research of the third author supported by the Austrian Science Fund (FWF): projects J0497-PHY and P26008-N25}

\subjclass[2010]{Primary 11R37, 11R29, 11R32, 11R20, 11R16, secondary 20D15, 20E22, 20F05, 20--04}
\keywords{Maximal unramified pro-$3$-extension, capitulation, Galois action, pure metacyclic fields of degree $6$, pure cubic fields;
finite \(3\)-groups, descendant trees, presentations.}

\date{Saturday, 06 March 2021}

\begin{abstract}
Let $p\equiv 1\,(\mathrm{mod}\,9)$ be a prime number
and $\zeta_3$ be a primitive cube root of unity.
Then $\mathrm{k}=\mathbb{Q}(\sqrt[3]{p},\zeta_3)$ is a pure metacyclic field with group
$\mathrm{Gal}(\mathrm{k}/\mathbb{Q})\simeq S_3$.
In the case that $\mathrm{k}$ possesses a $3$-class group $C_{\mathrm{k},3}$ of type $(9,3)$,
the capitulation of $3$-ideal classes of $\mathrm{k}$ in its unramified cyclic cubic extensions is determined,
and conclusions concerning the maximal unramified pro-$3$-extension $\mathrm{k}_3^{(\infty)}$ of $\mathrm{k}$ are drawn.
\end{abstract}

\maketitle


\section{Introduction}
\label{s:Introduction}

\noindent
Let $\mathrm{k}$ be a number field, and $L$ be an unramified abelian extension of $\mathrm{k}$.
We say that an ideal $\mathcal{I}$ of $\mathrm{k}$ or its class capitulates in $L$
if $\mathcal{I}$ becomes principal by extending it to an ideal of $L$.

\paragraph{}
Generally, it is difficult to determine the set of ideals of $\mathrm{k}$ which capitulate in $L$.
Among the first mathematicians who were concerned with this kind of problem is Kronecker,
who in 1882 studied the capitulation problem for imaginary quadratic fields.
   In 1902, Hilbert \cite{Hilbert} conjectured that
\textit{\lq\lq any ideal of a number field $\mathrm{k}$ capitulates in the Hilbert class field $\mathrm{k}^{(1)}$ of $\mathrm{k}$\rq\rq}.
This conjecture was demonstrated in 1930 by Furtw\"angler \cite{Furtwangler} after Artin had reduced it to a problem of group theory.
   Hilbert's famous Theorem 94 states that
\textit{\lq\lq if $L/\mathrm{k}$ is an unramified cyclic extension of prime degree $p$, then there exists an ideal of $\mathrm{k}$ of order $p$ which capitulates in $L$\rq\rq}.
The number of ideal classes of $\mathrm{k}$ that capitulate in such an extension $L$ is equal to $[L:\mathrm{k}][ E_{\mathrm{k}}:\mathcal{N}_{L/\mathrm{k}}(E_{L})]$,
where $E_{\mathrm{k}}$ and $E_{L}$ are respectively the groups of the units of $\mathrm{k}$ and $L$,
and $\mathcal{N}_{L/\mathrm{k}}$ is the relative norm of the extension $L/\mathrm{k}$.
   Around 1971, F. Terada, in his famous Tannaka-Terada theorem \cite{Tannaka-Terada} asserted that
\textit{\lq\lq if $\mathrm{k}/\mathrm{k}_0$ is a cyclic extension,
then all the ambiguous classes of $\mathrm{k}$, relative to $\mathrm{k}_0$, capitulate in the genus field of $\mathrm{k}$\rq\rq}.
   A generalization of Hilbert's Theorem 94 has been conjectured by Miyake \cite{Miyake} as follows:
\textit{\lq\lq the degree of an unramified abelian extension $L/\mathrm{k}$ divides the number of classes of $\mathrm{k}$ that capitulate in $L$\rq\rq}.
This conjecture was proved by Suzuki \cite{Suzuki1} in 1991,
who also succeeded, after 7 years of research, in proving a theorem \cite{Suzuki2} which generalizes all previous results as follows:
\textit{\lq\lq if $\mathrm{k}/\mathrm{k}_0$ is a cyclic extension, and $L$ is an unramified extension of $\mathrm{k}$ abelian over $\mathrm{k}_0$,
the degree $[L:\mathrm{k}]$ divides the number of classes of $\mathrm{k}$ invariant under $\mathrm{Gal}(\mathrm{k}/\mathrm{k}_0)$\rq\rq}.

\paragraph{}
Several mathematicians were interested in the study of the problem of capitulation for particular cases of number fields.
We cite, for example,
the study of capitulation in the intermediate extensions of $\mathrm{k}^{(1)}/\mathrm{k}$.
When the $p$-class group of $\mathrm{k}$ is of type $(p,p)$, this study was made by S. M. Chang and R. Foote.
For other studies we cite the works
\cite{Ayadi}, \cite{Azizi}, \cite{Is}, \cite{Kisilevesky}.
Here we contribute to this study with results on
the problem of capitulation of the $3$-ideal classes of the field $\mathrm{k}=\mathbb{Q}(\sqrt[3]{p},\zeta_3)$
in the intermediate extensions of $\mathrm{k}_3^{(1)}/\mathrm{k}$,
where $p$ is a prime such that $p\equiv 1\,(\mathrm{mod}\,9)$
and $\mathrm{k}_3^{(1)}$ denotes the \textit{Hilbert} $3$-class field of $\mathrm{k}$.

\paragraph{}
When the $3$-class group $C_{\mathrm{k},3}$ is of type $(9,3)$,
the extension $\mathrm{k}_3^{(1)}/\mathrm{k}$ admits eight intermediate fields.
Figure \ref{FF1} schematizes the situation.
 

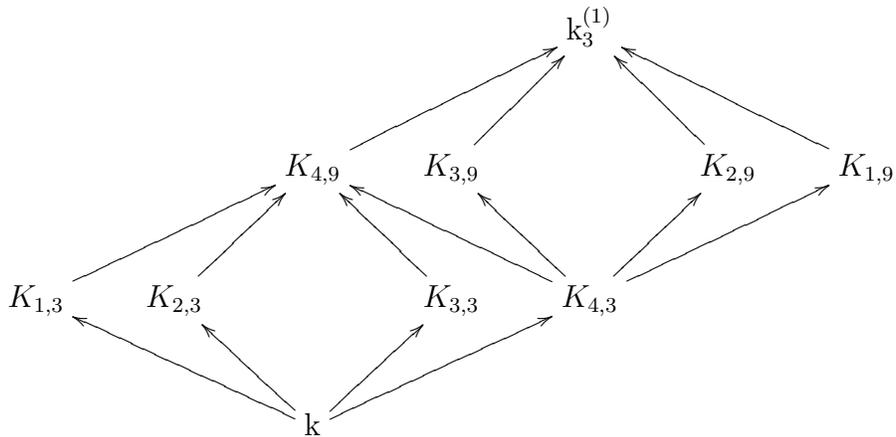
\begin{figure}[ht]
\caption{The unramified cubic and nonic sub-extensions of $ \mathrm{k}_3^{(1)}/\mathrm{k} $}
\label{FF1}
\[
\xymatrix@R=1,1cm{  
       {}&{}&{}&{} &{}&{}\\
       {}&{}&{}&{} & \mathrm{k_3^{(1)}}\ar@{<-}[rrd] \ar@{<-}[rd] \ar@{<-}[ld] \ar@{<-}[lld]&{}&{}\\       
       {}&{}& K_{4,9}\ar@{<-}[rrd] \ar@{<-}[rd] \ar@{<-}[ld] \ar@{<-}[lld] & K_{3,9} \ar@{<-}[rd] &{}& K_{2,9} \ar@{<-}[ld] & K_{1,9} \ar@{<-}[lld]\\
       K_{1,3} \ar@{<-}[rrd]& K_{2,3} \ar@{<-}[rd]&{}& K_{3,3} \ar@{<-}[ld] & K_{4,3} \ar@{<-}[lld]&{} \\
       {}&{}&  \mathrm{k} &{}&{}\\
       {}& {}& & {}& {}\\
  }
\]
\end{figure}

In section \ref{s:Capit}, we propose to determine the family $(K_{i,j})$ of all intermediate sub-fields of $\mathrm{k}\subseteq K_{i,j}\subseteq\mathrm{k}_3^{(1)}$,
where $1\leq i\leq 4$ and $j\in\lbrace 3,9\rbrace$, the object of Theorem \ref{thm:interm},
and we then study, in Theorem \ref{Capitulation93} of section \ref{s:Capit93}, the capitulation of $3$-ideal classes of $\mathrm{k}$ in the fields $K_{i,j}$.
Section \ref{s:Tower} is devoted to the identification of the
maximal unramified pro-$3$-extension $\mathrm{k}_3^{(\infty)}$ of $\mathrm{k}=\mathbb{Q}(\sqrt[3]{p},\zeta_3)$,
and to the proof that the dominating proportion (at least $94\%$) of the fields $\mathrm{k}$ has
a metabelian $3$-class field tower $\mathrm{k}_3^{(\infty)}$ with exactly two stages.

All our theoretical results are based on exhaustive computer results.
The usual notation is as follows:
    
\begin{itemize}
 \item The letter $p$ designates a prime number congruent to $1$ modulo $3$;
 \item $\Gamma=\mathbb{Q}(\sqrt[3]{d})$: a pure cubic field, where $d\ge 2$ is a cube-free integer;
 \item $\mathrm{k}_0=\mathbb{Q}(\zeta_3)$: the cyclotomic field, where $\zeta_3=e^{2i\pi/3}$ ;
 \item $\mathrm{k}=\Gamma(\zeta_3)$: the normal closure of $\Gamma$;
 \item $\Gamma^{\prime}$ and $\Gamma^{\prime\prime}$: the two conjugate cubic fields of $\Gamma$, contained in $\mathrm{k}$;
 \item $u=[E_{\mathrm{k}}:E_0]$: the index of the sub-group $E_0$ generated by the units of intermediate fields of the extension $\mathrm{k}/\mathbb{Q}$
 in the group of units $E_{\mathrm{k}}$ of $\mathrm{k}$;
 \item $\langle\tau\rangle=\operatorname{Gal}\left(\mathrm{k}/\Gamma\right)$, $\tau^2=id$, $\tau(\zeta_3)=\zeta_3^2$ and $\tau(\sqrt[3]{d})=\sqrt[3]{d}$;
 \item $\langle\sigma\rangle=\operatorname{Gal}\left(\mathrm{k}/\mathrm{k}_0\right)$, $\sigma^3=id$, $\sigma(\zeta_3)=\zeta_3$ and $\sigma(\sqrt[3]{d})=\zeta_3\sqrt[3]{d}$;
 \item For an algebraic number field $L$:
 \begin{itemize}
 \item $C_{L,3}$: the $3$-class group of $L$;
 \item $L_3^{(1)}$: the Hilbert $3$-class field of $L$;
 \item $\lbrack\mathcal{I}\rbrack$: the class of a fractional ideal $\mathcal{I}$ in the class group of $L$.
 \end{itemize}
\end{itemize}
 
\section{Unramified cubic and nonic sub-extensions of  $ \mathrm{k}_3^{(1)}/\mathrm{k}$}
\label{s:Capit}
\subsection{Preliminaries}
\label{s:Intro}
\paragraph{}
In his thesis \cite{Is}, M. C. Ismaili established that
the $3$-class group $C_{\mathrm{k},3}$ of $\mathrm{k}=\mathbb{Q}(\sqrt[3]{d},\zeta_3)$ is of type $(3,3)$
if and only if $3$ divides exactly the class number of $\Gamma$ and $u=3$,
where $u$ is the units index defined in the notations,
and he determined all the integers $d$ which satisfy this property by distinguishing three types of fields $\mathrm{k}$, namely type I, II, and III. 

\paragraph{}
Here, we are interested in the case where the $3$-class group $C_{\mathrm{k},3}$ is of type $(3,9)$.
Assuming that the number of classes of $\Gamma$ is divisible exactly by $9$,
Theorem \ref{39} describes the structure of $C_{\mathrm{k},3}$ as follows:

\begin{theorem}
\label{39}
Let $\Gamma$ be a pure cubic field,
\(\mathrm{k}\) be its normal closure, $C_{\mathrm{k},3}$ (resp. $C_{\Gamma,3}$) the $3$-class group of $\mathrm{k}$ (resp. $\Gamma$),
and \(u\) be the units index defined in the notations, then:
$$C_{\mathrm{k},3}\simeq\mathbb{Z}/9\mathbb{Z}\times\mathbb{Z}/3\mathbb{Z}
\quad\Longleftrightarrow\quad\lbrack C_{\Gamma,3}\simeq\mathbb{Z}/9\mathbb{Z}\quad \text{ and } \quad u=1\rbrack.$$
\end{theorem}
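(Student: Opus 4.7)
The plan is to combine the $\tau$-eigenspace decomposition of $C_{\mathrm{k},3}$ with the classical class number relation for $S_3$-fields, and then settle the remaining structural question by a $3$-rank count. Because $\tau\in\operatorname{Gal}(\mathrm{k}/\Gamma)$ has order~$2$ and $C_{\mathrm{k},3}$ is a $3$-group, $2$ acts invertibly on $C_{\mathrm{k},3}$ and yields a direct sum decomposition
\[
C_{\mathrm{k},3} \;=\; C_{\mathrm{k},3}^{+}\oplus C_{\mathrm{k},3}^{-},
\qquad C_{\mathrm{k},3}^{\pm}=\{c\in C_{\mathrm{k},3}\mid\tau c=\pm c\}.
\]
The relations $N_{\mathrm{k}/\Gamma}\circ j_{\mathrm{k}/\Gamma}=[\mathrm{k}:\Gamma]=2$ and $j_{\mathrm{k}/\Gamma}\circ N_{\mathrm{k}/\Gamma}=1+\tau$, together with the invertibility of $2$ on the $3$-primary part, identify $C_{\mathrm{k},3}^{+}$ with $j_{\mathrm{k}/\Gamma}(C_{\Gamma,3})\simeq C_{\Gamma,3}$ and $C_{\mathrm{k},3}^{-}$ with $\ker(N_{\mathrm{k}/\Gamma})$.

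The second ingredient I would invoke is the classical Brauer--Kuroda type relation for $S_3$-extensions (used in Ismaili's thesis \cite{Is} for the $(3,3)$-case),
\[
h(\mathrm{k}) \;=\; \tfrac{u}{3}\,h(\Gamma)^{2},\qquad u\in\{1,3\},
\]
which, once one knows that $3\mid h(\Gamma)$, restricts to $3$-parts as $|C_{\mathrm{k},3}|=\tfrac{u}{3}\,|C_{\Gamma,3}|^{2}$.

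With these two tools in hand, both implications are short. For $(\Leftarrow)$, the hypotheses $|C_{\Gamma,3}|=9$ and $u=1$ give $|C_{\mathrm{k},3}|=27$; since $C_{\mathrm{k},3}^{+}\simeq C_{\Gamma,3}\simeq\mathbb{Z}/9\mathbb{Z}$ is already of order~$9$, the complementary summand $C_{\mathrm{k},3}^{-}$ has order~$3$, and the direct sum yields the desired type $(9,3)$. For $(\Rightarrow)$, reading $|C_{\mathrm{k},3}|=27$ and $\operatorname{rank}_{3}C_{\mathrm{k},3}=2$ off $(9,3)$, the class number formula forces $u\cdot|C_{\Gamma,3}|^{2}=81$, so within $u\in\{1,3\}$ the only option is $u=1$ and $|C_{\Gamma,3}|=9$. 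To exclude the possibility $C_{\Gamma,3}\simeq(3,3)$, a rank count suffices: otherwise $C_{\mathrm{k},3}^{+}$ would contribute $3$-rank~$2$, the non-trivial $C_{\mathrm{k},3}^{-}$ would contribute a further $3$-rank~$1$, and $\operatorname{rank}_{3}C_{\mathrm{k},3}\geq 3$ would contradict the structure $(9,3)$.

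The main obstacle is neither the eigenspace decomposition (automatic, since $\gcd(|\tau|,|C_{\mathrm{k},3}|)=1$) nor the final rank contradiction (purely combinatorial), but the careful bookkeeping of the analytic class number relation at the prime~$3$: pinning down the factor $u/3$, the exponent~$2$ on $|C_{\Gamma,3}|$, and the boundedness $u\in\{1,3\}$, and then verifying that the formula restricts cleanly to the $3$-part once $3\mid h(\Gamma)$ is known. I would handle this by direct reference to \cite{Is} and to the Barrucand--Cohn--Honda tradition for pure cubic fields, rather than re-deriving it.
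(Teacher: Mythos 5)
Your argument is correct, but note that the paper itself offers no proof of Theorem \ref{39}: it simply cites \cite[Lemma 2.5]{IJNT2}, so there is no in-line argument to compare against. Your reconstruction uses exactly the ingredients on which that reference (and the rest of this paper) relies: the decomposition $C_{\mathrm{k},3}=C_{\mathrm{k},3}^{+}\oplus C_{\mathrm{k},3}^{-}$ coming from the order-two action of $\tau$ on an odd-order group, the identification $C_{\mathrm{k},3}^{+}\simeq C_{\Gamma,3}$ (this is Gerth's Lemma 2.2 in \cite{Ge1975}, quoted later in the paper's proof of Theorem \ref{thm:interm}), and the class number relation $h_{\mathrm{k}}=\tfrac{u}{3}h_{\Gamma}^{2}$ with $u\in\{1,3\}$ from \cite{Is}. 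Both implications then go through as you describe: the forward direction pins down $u=1$ and $\lvert C_{\Gamma,3}\rvert=9$ from $u\lvert C_{\Gamma,3}\rvert^{2}=81$, and the rank (or exponent) count eliminates $C_{\Gamma,3}\simeq(3,3)$; the converse is immediate from $\lvert C_{\mathrm{k},3}^{-}\rvert=3$. One small simplification: the restriction of the relation to $3$-parts, $\lvert C_{\mathrm{k},3}\rvert=\tfrac{u}{3}\lvert C_{\Gamma,3}\rvert^{2}$, is automatic by comparing $3$-adic valuations, since $u/3$ is a power of $3$; you do not need to know $3\mid h_{\Gamma}$ in advance (indeed $u=1$ forces it a posteriori). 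The identification of $C_{\mathrm{k},3}^{-}$ with $\ker(N_{\mathrm{k}/\Gamma})$ is correct but is not actually used in your argument.
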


\begin{proof}
See \cite[Lemma 2.5, p. 4]{IJNT2}.
\end{proof}
 
\paragraph{}
Further, we have classified all the integers $d$ for which the $3$-class group $C_{\mathrm{k},3}$ is of type $(3,9)$ in the following Theorem:

\begin{theorem}
\label{T39}
Let $\Gamma=\mathbb{Q}(\sqrt[3]{d})$ be a pure cubic field,
where $d\ge 2$ is a cube free integer,
and let $\mathrm{k}=\mathbb{Q}(\sqrt[3]{d},\zeta_3)$ be its normal closure.
Denote by $u$ the index of the subgroup generated by the units of the intermediate fields of the extension $\mathrm{k}/\mathbb{Q}$
in the unit group of $\mathrm{k}$.
\begin{itemize}
\item[1)]
If the field $\mathrm{k}$ has a $3$-class group of type $(9,3)$,
then $d=p^{e}$, where $p$ is a prime congruent to $1\,(\mathrm{mod}\,9)$ and $e=1$ or $2$.
\item[2)]
Conversely, if $p$ is a prime congruent to $1\,(\mathrm{mod}\,9)$, and if
$9$ divides exactly the class number of $\Gamma=\mathbb{Q}(\sqrt[3]{p})$ and $u=1$,
then the $3$-class group of $\mathrm{k}$ is of type $(3,9)$.
\end{itemize}
\end{theorem}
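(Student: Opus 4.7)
The plan is to reduce both implications to Theorem \ref{39}, which already identifies the condition \(C_{\mathrm{k},3}\simeq(9,3)\) with the conjunction ``\(C_{\Gamma,3}\simeq\mathbb{Z}/9\mathbb{Z}\) and \(u=1\)''. Under this reduction, part (1) becomes the purely cubic-field statement that a cyclic \(C_{\Gamma,3}\) of order \(9\) forces \(d=p^{e}\) with \(p\equiv 1\pmod 9\) prime and \(e\in\{1,2\}\), while part (2) reduces to showing that the stated hypotheses imply \(C_{\Gamma,3}\simeq\mathbb{Z}/9\mathbb{Z}\), whence Theorem \ref{39} can be applied in the other direction.

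For part (1), I would first invoke genus theory for \(\Gamma\), in the form of the ambiguous class number formula applied to the cyclic cubic Kummer extension \(\mathrm{k}/\mathrm{k}_0\), to obtain a lower bound for the \(3\)-rank of \(C_{\Gamma,3}\) in terms of the rational primes dividing \(d\) and their residues modulo \(3\). Cyclicity forces this rank to be exactly \(1\), which restricts \(d\) to a prime power \(p^{e}\) (with \(e\in\{1,2\}\) by cube-freeness); a short case analysis on \(p\bmod 3\) rules out \(p=3\) and \(p\equiv 2\pmod 3\), since in those situations the genus-theoretic count cannot accommodate a cyclic factor of order \(9\). The main obstacle is then to upgrade \(p\equiv 1\pmod 3\) to \(p\equiv 1\pmod 9\), i.e.\ to detect an element of order exactly \(9\) rather than just \(3\); I would handle this either by a Scholz-type reflection argument relating \(C_{\Gamma,3}\) to the \(3\)-class group of a suitable dual field, or by an explicit \(3\)-adic analysis of \(h(\Gamma)\), both of which lead to the conclusion that the presence of an order-\(9\) class forces \(p\) to split completely in \(\mathbb{Q}(\zeta_{9})\).

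Part (2) is then essentially immediate. The hypothesis that \(9\) exactly divides \(h(\Gamma)\) gives \(\lvert C_{\Gamma,3}\rvert=9\); moreover, for \(d=p\) prime, the same genus-theoretic bound used in part (1) yields \(3\)-rank equal to \(1\), so \(C_{\Gamma,3}\) must be cyclic and hence isomorphic to \(\mathbb{Z}/9\mathbb{Z}\). Combined with the assumption \(u=1\), Theorem \ref{39} then yields \(C_{\mathrm{k},3}\simeq\mathbb{Z}/9\mathbb{Z}\times\mathbb{Z}/3\mathbb{Z}\), completing the proof.
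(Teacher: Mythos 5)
Your opening reduction is the right first move, and it is consistent with how the paper itself frames the result: by Theorem \ref{39}, the condition $C_{\mathrm{k},3}\simeq(9,3)$ is equivalent to $C_{\Gamma,3}\simeq\mathbb{Z}/9\mathbb{Z}$ together with $u=1$, so everything hinges on classifying the radicands $d$ for which $C_{\Gamma,3}$ can be cyclic of order $9$. Note, however, that the paper does not prove Theorem \ref{T39} at all --- its ``proof'' is the citation \cite[Theorem 1.1]{IJNT2} --- so your sketch cannot be matched against an internal argument; it can only be judged on its own merits. On those merits, part (2) is essentially sound (granting the rank-one claim for $C_{\Gamma,3}$ when $d=p$, which itself needs the class number relation $h_{\mathrm{k}}=(u/3)h_\Gamma^2 h_{\mathrm{k}_0}$ and a module-theoretic bound, not just ``the same genus-theoretic bound''), but part (1) has real gaps at exactly the two places where the theorem's content lies.

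First, the step ``cyclicity forces the rank to be $1$, which restricts $d$ to a prime power'' does not follow from the ambiguous class number formula for $\mathrm{k}/\mathrm{k}_0$ alone. That formula gives $\#C_{\mathrm{k},3}^{(\sigma)}=3^{t-1}/\lbrack E_{\mathrm{k}_0}:E_{\mathrm{k}_0}\cap N_{\mathrm{k}/\mathrm{k}_0}(\mathrm{k}^\times)\rbrack$ with $t$ the number of ramified primes of $\mathrm{k}_0$, and for $C_{\mathrm{k},3}\simeq(9,3)$ the genus quotient has order $3$, which permits $t=3$ when the unit index equals $3$. A mixed radicand such as $d=q_1q_2$ with $q_1\equiv 1$ and $q_2\equiv 2\pmod 3$ (or a radicand with $3$ ramified, Dedekind's first kind) realizes $t=3$ and is therefore \emph{not} excluded by this count; eliminating such $d$ requires the finer rank formulas of Gerth \cite{Ge1975} for $C_{\mathrm{k},3}^{+}\simeq C_{\Gamma,3}$ and $C_{\mathrm{k},3}^{-}$ separately. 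Second, the upgrade from $p\equiv 1\pmod 3$ to $p\equiv 1\pmod 9$ is the heart of the matter: one must show that for $p\equiv 4,7\pmod 9$ the $3$-class group of $\Gamma$ has order exactly $3$, so it can never be $\mathbb{Z}/9\mathbb{Z}$. This is precisely the reflection-type result of Calegari--Emerton and Gerth \cite{Cal-Emer}, \cite{Ge2005} that the paper quotes immediately after Theorem \ref{T39}; naming ``a Scholz-type reflection argument or an explicit $3$-adic analysis'' points at the right family of tools but does not supply the argument. As it stands, part (1) of your proposal is a plan whose two load-bearing assertions are asserted rather than proved.
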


\begin{proof}
See \cite[Theorem 1.1, p. 2]{IJNT2}.
\end{proof}
 
\paragraph{}
This being the case, let $ \mathrm{k} $ be the special field $\mathbb{Q}(\sqrt[3]{p},\zeta_3)$, with $p\equiv 1\,(\mathrm{mod}\,9)$.
Calegari and Emerton \cite[Lemma 5.11]{Cal-Emer} proved that the rank of the $3$-class group of this field $\mathrm{k}$
is equal to 2 if $9$ divides the class number of $\mathbb{Q}(\sqrt[3]{p})$.
The converse of the Calegari-Emerton result is shown by Frank Gerth III in \cite[Thm. 1, p. 471]{Ge2005}. Assume that $ 9 $ divides exactly the class number of $\Gamma=\mathbb{Q}(\sqrt[3]{p})$, where $p\equiv 1\pmod 9$, then $C_{\mathrm{k},3}$ is of type $(9,3)$ if and only if $u=1$. The generators of $C_{\mathrm{k},3}$ when $C_{\mathrm{k},3}$ is of  type $(9,3)$ are determined as follows:
 \begin{theorem}
\label{prop:carrr39}
Let $\mathrm{k}=\mathbb{Q}(\sqrt[3]{p},\zeta_3)$, where $p$ is a prime such that $p\equiv 1\,(\mathrm{mod}\,9)$, $\Gamma=\mathbb{Q}(\sqrt[3]{p})$, $\mathrm{k}_0=\mathbb{Q}(\zeta_3)$,  $\mathrm{k}_3^{(1)}$  the $3$-Hilbert class field of  $\mathrm{k}$,  $C_{\mathrm{k},3}$ the $3$-class group of $\mathrm{k}$, we note also $C_{\mathrm{k},3}^{+}=\{ \mathcal{C} \in C_{\mathrm{k},3} \, \vert\,
\mathcal{C}^{\tau}=\mathcal{C}  \}$, and $C_{\mathrm{k},3}^{-}=\{ \mathcal{C} \in C_{\mathrm{k},3} \, \vert\,
\mathcal{C}^{\tau}=\mathcal{C}^{-1} \}$, where $\langle\tau\rangle=\mathrm{Gal}(\mathrm{k}/\Gamma)$.
\\
 Suppose that
$C_{\mathrm{k},3}$ is of type $ (9,3)$.  Let $\langle \mathcal{A} \rangle=C_{\mathrm{k},3}^{+}$,
where $\mathcal{A} \in C_{\mathrm{k},3}$ such that $\mathcal{A}^{9}=1$
 and $\mathcal{A}^{3} \neq 1$,  \(C_{\mathrm{k},3}^{-}=\langle\mathcal{B}\rangle\),  where $ \mathcal{B} \in C_{\mathrm{k},3}$ such that $\mathcal{B}^{3}=1$
 and $\mathcal{B} \neq 1$, $C_{\mathrm{k},3}^{(\sigma)}$ is 
 the ambiguous ideal class group of
  $\mathrm{k}\vert\mathrm{k}_{0}$, 
 and   \(C_{\mathrm{k},3}^{1-\sigma}=\{ \mathcal{A}^{1-\sigma}\, \vert\,
\mathcal{A} \in C_{\mathrm{k},3} \}\) is the principal genus  of $C_{\mathrm{k},3}$, where  $\langle\sigma\rangle=\mathrm{Gal}(\mathrm{k}/\mathrm{k}_0)$. Then:
 \begin{enumerate}
 \item  $C_{\mathrm{k},3}= C_{\mathrm{k},3}^{+} \times C_{\mathrm{k},3}^{-} =\langle \mathcal{A}, \mathcal{B}\rangle$.
\item The ambiguous class group \(C_{\mathrm{k},3}^{(\sigma)}\) is a sub-group of \(C_{\mathrm{k},3}^{+}\) of order $3$, with $\mathcal{A} \not\in C_{\mathrm{k},3}^{(\sigma)} $, and we have:
$$C_{\mathrm{k},3}^{(\sigma)}=\langle\mathcal{A}^{3}\rangle=\langle\mathcal{B}^{1-\sigma}\rangle.$$
 \item \(C_{\mathrm{k},3}^{-}=\langle(\mathcal{A}^{2})^{\sigma-1}\rangle\).
  \item The principal genus
\(C_{\mathrm{k},3}^{1-\sigma}= C_{\mathrm{k},3}^{(\sigma)} \times C_{\mathrm{k},3}^{-}
=\langle \mathcal{A}^3, \mathcal{B} \rangle \) is an elementary bi-cyclic $3$-group of type \((3,3)\),
  \end{enumerate}
\end{theorem}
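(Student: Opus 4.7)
The plan is to exploit two structural ingredients: the coprime action of the order-$2$ element $\tau$ on the $3$-group $C_{\mathrm{k},3}$, which yields a clean $\tau$-eigenspace decomposition, and the vanishing of the norm operator $1+\sigma+\sigma^{2}$ on $C_{\mathrm{k},3}$, which follows from $h(\mathrm{k}_{0}) = 1$ (so the composition $j\circ N_{\mathrm{k}/\mathrm{k}_{0}}$ is trivial on the $3$-part) and makes $C_{\mathrm{k},3}$ a cyclic $\mathbb{Z}_{3}[\zeta_{3}]$-module of length three.

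For Part (1), the idempotents $(1\pm\tau)/2$ are well defined on $C_{\mathrm{k},3}$ because $\gcd(2,27)=1$, giving the splitting $C_{\mathrm{k},3}=C_{\mathrm{k},3}^{+}\oplus C_{\mathrm{k},3}^{-}$. To pin down the summands I would use the lift $j\colon C_{\Gamma,3}\to C_{\mathrm{k},3}$: its image lies in $C_{\mathrm{k},3}^{+}$ (since $\tau$ fixes $\Gamma$), and $N_{\mathrm{k}/\Gamma}\circ j$ is multiplication by $2$, invertible on a $3$-group, so $j$ is injective on 3-parts. Combined with $C_{\Gamma,3}\simeq\mathbb{Z}/9\mathbb{Z}$ from Theorem~\ref{39}, this yields $|C_{\mathrm{k},3}^{+}|\ge 9$; nontriviality of $C_{\mathrm{k},3}^{-}$ (from the ambiguous class number computation below) then forces $|C_{\mathrm{k},3}^{+}|=9$ and $|C_{\mathrm{k},3}^{-}|=3$, producing generators $\mathcal{A}$ of order $9$ in $C^{+}$ and $\mathcal{B}$ of order $3$ in $C^{-}$.

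The heart of the proof is Part (2). I would apply Chevalley's ambiguous class number formula to $\mathrm{k}/\mathrm{k}_{0}$; the hypothesis $p\equiv 1\pmod{9}$ places $p$ in $(\mathcal{O}_{\mathrm{k}_{0},\pi}^{\times})^{3}$ by Kummer theory, so $\mathrm{k}/\mathrm{k}_{0}$ is unramified at $3$ and only the two primes above $p$ ramify (each with $e=3$); together with $h(\mathrm{k}_{0})=1$ and the unit-index analysis under $u=1$, this gives $|C_{\mathrm{k},3}^{(\sigma)}|=3$. Since $\tau\sigma\tau=\sigma^{-1}$, $\tau$ preserves $C_{\mathrm{k},3}^{(\sigma)}$ and acts there as $\pm 1$; the $\tau$-fixed ambiguous ideal $\mathfrak{P}\bar{\mathfrak{P}}=(\sqrt[3]{p})$ certifies trivial action, giving $C^{(\sigma)}\subseteq C^{+}$. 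The key algebraic computation then uses $(\mathcal{A}^{\sigma})^{\tau}=\mathcal{A}^{\sigma^{-1}}=\mathcal{A}^{\sigma^{2}}$ (from $\mathcal{A}\in C^{+}$ and the braid relation) combined with $1+\sigma+\sigma^{2}=0$, yielding $(\mathcal{A}^{\sigma})^{1+\tau}=\mathcal{A}^{\sigma}\mathcal{A}^{\sigma^{2}}=\mathcal{A}^{-1}$; $\tau$-decomposing $\mathcal{A}^{\sigma}$ forces its $+$-component to equal $\mathcal{A}^{4}$ (since $2\cdot 4\equiv -1\pmod 9$), so $\mathcal{A}^{\sigma}=\mathcal{A}^{4}\cdot z$ with $z\in C^{-}$. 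Cubing gives $(\mathcal{A}^{3})^{\sigma}=\mathcal{A}^{12}z^{3}=\mathcal{A}^{3}$ (as $z^{3}=1$), so $\mathcal{A}^{3}\in C^{(\sigma)}$ and generates it. An analogous computation with $\mathcal{B}^{3}=1$ and $\sigma^{2}=-1-\sigma$ shows $(\mathcal{B}^{1-\sigma})^{\sigma}=\mathcal{B}^{1-\sigma}$, and nontriviality follows from $\mathcal{B}\notin C^{+}$.

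Parts (3) and (4) are then direct: the formula $\mathcal{A}^{\sigma}=\mathcal{A}^{4}z$ gives $(\mathcal{A}^{2})^{\sigma-1}=\mathcal{A}^{6}\cdot z^{2}$, whose $C^{-}$-component $z^{2}$ generates $C_{\mathrm{k},3}^{-}$ (note $z\ne 1$, else $\mathcal{A}\in C^{(\sigma)}$ would contradict $|\mathcal{A}|=9$); and $C_{\mathrm{k},3}^{1-\sigma}$ has order $27/3=9$, contains the two order-$3$ subgroups $C^{(\sigma)}=\langle\mathcal{A}^{3}\rangle$ and $C^{-}=\langle\mathcal{B}\rangle$ that intersect trivially (since $C^{(\sigma)}\subseteq C^{+}$), so equals their direct sum and is of type $(3,3)$. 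The hardest step is the ambiguous class number calculation $|C_{\mathrm{k},3}^{(\sigma)}|=3$: this requires carefully handling the unit index $[E_{\mathrm{k}_{0}}:E_{\mathrm{k}_{0}}\cap N(\mathrm{k}^{\times})]$ under the hypothesis $u=1$, which is where the units-index assumption feeds in; once this is settled, the remaining assertions reduce to direct manipulations in $\mathbb{Z}_{3}[S_{3}]$.
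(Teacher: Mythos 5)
The paper does not prove this theorem in-house (it only cites \cite[Proposition 3.4]{boultim}), so I can only judge your argument on its own merits. Your overall toolkit is the right one: the coprime $\tau$-action giving $C_{\mathrm{k},3}=C^{+}\times C^{-}$, the vanishing of $1+\sigma+\sigma^{2}$ because $h(\mathrm{k}_0)=1$, and Chevalley's formula giving $\lvert C_{\mathrm{k},3}^{(\sigma)}\rvert=3$ (where the correct reason the unit index is $1$ is that $p\equiv 1\pmod 9$ makes $\zeta_3$ a cube modulo $\pi_i$, hence a local norm at the ramified primes; alternatively note $C^{(\sigma)}\neq 1$ automatically and the formula gives $3/q$). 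Parts (2) and (4) come out correctly along your lines.

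The genuine gap is in part (3), and your own formula exposes it. From $\mathcal{A}^{\sigma}=\mathcal{A}^{4}z$ you get $(\mathcal{A}^{2})^{\sigma-1}=\mathcal{A}^{6}z^{2}$, and $\mathcal{A}^{6}\neq 1$; so this element does \emph{not} lie in $C^{-}$ and therefore cannot generate it. Saying that ``its $C^{-}$-component generates $C^{-}$'' proves a different statement from assertion (3). Moreover this is not an artifact of your choice $a_{+}=\mathcal{A}^{4}$: the identity
\begin{equation*}
\bigl((\mathcal{A}^{2})^{\sigma-1}\bigr)^{1+\tau}=\mathcal{A}^{2(\sigma+\sigma^{2})-4}=\mathcal{A}^{-6}=\mathcal{A}^{3}\neq 1
\end{equation*}
is forced by $\mathcal{A}\in C^{+}$ of order $9$, $(\mathcal{A}^{\sigma})^{\tau}=\mathcal{A}^{\sigma^{2}}$ and $1+\sigma+\sigma^{2}=0$ alone, so the $+$-component of $(\mathcal{A}^{2})^{\sigma-1}$ is $\mathcal{A}^{6}$ no matter what. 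What \emph{is} true (and what the later proof of Theorem \ref{Capitulation93} actually uses) is that the twisted element $\bigl((\mathcal{A}^{2})^{\sigma-1}\bigr)^{\sigma}=\mathcal{A}^{\sigma(1-\sigma)}=\mathcal{A}^{1+2\sigma}=z^{2}$ lies in $C^{-}$ and generates it; you should either prove that statement or explicitly flag the discrepancy with (3) as written, rather than eliding it. Two smaller repairs: in part (1) the upper bound $\lvert C^{+}\rvert=9$ does not follow from ``nontriviality of $C^{-}$ via the ambiguous class number'' (since $C^{(\sigma)}\subseteq C^{+}$ it says nothing about $C^{-}$); use instead $c^{2}=c^{1+\tau}=j\bigl(N_{\mathrm{k}/\Gamma}(c)\bigr)$ for $c\in C^{+}$ to get $C^{+}=j(C_{\Gamma,3})$. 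In part (2) the ideal $(\sqrt[3]{p})$ is principal, so it certifies nothing about the $\tau$-action on $C^{(\sigma)}$; and your parenthetical reason for $z\neq 1$ is off ($z=1$ gives $\mathcal{A}^{\sigma}=\mathcal{A}^{4}$, not $\mathcal{A}^{\sigma}=\mathcal{A}$) --- the correct reason is that $z=1$ would force $1=\mathcal{A}^{1+\sigma+\sigma^{2}}=\mathcal{A}^{1+4+16}=\mathcal{A}^{3}$, a contradiction. Both containments in your part (2) argument do survive, since exhibiting the nontrivial element $\mathcal{A}^{3}$ inside the order-$3$ group $C^{(\sigma)}$ already yields $C^{(\sigma)}=\langle\mathcal{A}^{3}\rangle\subseteq C^{+}$.
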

\begin{proof}
See \cite[Proposition 3.4, pp. 9-10]{boultim}.
\end{proof}
 \begin{theorem} \label{thm:main} 
Let $\mathrm{k}=\mathbb{Q}(\sqrt[3]{ p}, \zeta_3 )$, where $p$ is a prime such that $p\equiv 1\,(\mathrm{mod}\,9)$. The prime $ 3$  decomposes in $ \emph{k} $ under form $3\mathcal{O}_{\mathrm{k}}=
\mathcal{P}^{2}\mathcal{Q}^{2}\mathcal{R}^{2}$, where $\mathcal{P},
\,\,\mathcal{Q}$ and $\mathcal{R}$ are prime ideals of
$\mathrm{k}$. Put \(h=\frac{h_{\mathrm{k}}}{27}\),
where \(h_{\mathrm{k}}\) is the class number of \(\mathrm{k}\).
Assume $9$ divides exactly the class number of $\mathbb{Q}(\sqrt[3]{ p})$ and $u=1$. If $3$ is not cubic residue modulo $p$, then:
\begin{enumerate}
\item the class \([\mathcal{R}^{h}]\) generates $C_{\mathrm{k},3}^{+}$;
\item  \(C_{\mathrm{k},3}\)  is generated by classes \([\mathcal{R}^{h}]\) and
\([\mathcal{R}^{h}][\mathcal{P}^{h}]^2\), and we have:
\[C_{\mathrm{k},3}=\langle[\mathcal{R}^{h}]\rangle\times\langle[\mathcal{R}^{h}][\mathcal{P}^{h}]^2\rangle.\]
\end{enumerate}
\end{theorem}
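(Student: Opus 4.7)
The strategy is to use Theorem \ref{prop:carrr39}, which gives the decomposition $C_{\mathrm{k},3}=C_{\mathrm{k},3}^+\times C_{\mathrm{k},3}^-\simeq\mathbb{Z}/9\mathbb{Z}\times\mathbb{Z}/3\mathbb{Z}$ and identifies $C_{\mathrm{k},3}^{(\sigma)}$ as the unique order-$3$ subgroup of $C_{\mathrm{k},3}^+$, and to identify $[\mathcal{R}^h]$ explicitly as a generator of $C_{\mathrm{k},3}^+$.

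First, I would locate $\mathcal{R}$ within the Galois orbit of primes above $3$. Since $3\mathcal{O}_{\mathrm{k}_0}=(\lambda)^2$ with $\lambda=1-\zeta_3$, the stated decomposition forces $\lambda\mathcal{O}_\mathrm{k}=\mathcal{P}\mathcal{Q}\mathcal{R}$, so $\langle\sigma\rangle$ permutes the three primes transitively. The full group $\mathrm{Gal}(\mathrm{k}/\mathbb{Q})\simeq S_3$ then acts as $S_3$ on this $3$-set, and the involution $\tau$ fixes exactly one of them; after relabeling I take $\tau(\mathcal{R})=\mathcal{R}$ with $\sigma$ cycling $\mathcal{R}\mapsto\mathcal{P}\mapsto\mathcal{Q}\mapsto\mathcal{R}$. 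Immediately $[\mathcal{R}^h]\in C_{\mathrm{k},3}^+$, and principality of $\lambda\mathcal{O}_\mathrm{k}$ gives $[\mathcal{R}^h]^{1+\sigma+\sigma^2}=1$.

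For part (1), since $C_{\mathrm{k},3}^+$ is cyclic of order $9$, it suffices to show $[\mathcal{R}^h]$ has order $9$. By Theorem \ref{prop:carrr39}(2), the elements of order at most $3$ in $C_{\mathrm{k},3}^+$ form exactly $C_{\mathrm{k},3}^{(\sigma)}$, so the task reduces to proving $[\mathcal{R}^h]$ is not $\sigma$-fixed, equivalently $[\mathcal{P}^h]\neq[\mathcal{R}^h]$ in $C_{\mathrm{k},3}$. To this end, I would descend to $\Gamma$: the condition $\tau\mathcal{R}=\mathcal{R}$ makes $\mathcal{R}$ ramified in $\mathrm{k}/\Gamma$ over a prime $\mathfrak{r}$ with $\mathfrak{r}\mathcal{O}_\mathrm{k}=\mathcal{R}^2$, whereas $\mathfrak{p}\mathcal{O}_\mathrm{k}=\mathcal{P}\mathcal{Q}$ splits, giving $3\mathcal{O}_\Gamma=\mathfrak{p}^2\mathfrak{r}$ and the relation $[\mathfrak{r}]=[\mathfrak{p}]^{-2}$ in $C_\Gamma$. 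An equality $[\mathcal{P}^h]=[\mathcal{R}^h]$ in $C_{\mathrm{k},3}$ would push under $N_{\mathrm{k}/\Gamma}$ to a restrictive relation on the class of $\mathfrak{p}$ in $C_{\Gamma,3}\simeq\mathbb{Z}/9\mathbb{Z}$. The hypothesis that $3$ is not a cubic residue modulo $p$ is precisely what rules this out, via the equivalent statement that $[\mathfrak{p}]$ has maximal order $9$ in $C_{\Gamma,3}$. I expect this equivalence to be the main obstacle of the proof: translating $\left(\tfrac{3}{p}\right)_3\neq 1$ into maximal order of $[\mathfrak{p}]$ will require either an explicit cubic-reciprocity computation in $\mathrm{k}_0$ or a Scholz-type reflection argument linking the $3$-ranks of $C_\Gamma$ and of an auxiliary cyclic cubic Kummer extension of $\mathrm{k}_0$.

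Part (2) is then a direct Galois-algebra calculation. Set $a=[\mathcal{R}^h]$, so $[\mathcal{P}^h]=a^\sigma$ and $[\mathcal{R}^h][\mathcal{P}^h]^2=a^{1+2\sigma}$. Applying $\tau$, using $\tau\sigma=\sigma^{-1}\tau$, $a^\tau=a$, and the identity $1+\sigma+\sigma^2\equiv 0$ on $C_{\mathrm{k},3}$ (valid since $C_{\mathrm{k}_0,3}$ is trivial), one checks $(a^{1+2\sigma})^\tau=a^{-(1+2\sigma)}$, placing $a^{1+2\sigma}$ in $C_{\mathrm{k},3}^-$. For non-triviality, decompose $a^\sigma=y+z$ with $y\in C_{\mathrm{k},3}^+$, $z\in C_{\mathrm{k},3}^-$; the relation $a+a^\sigma+a^{\sigma^2}=0$ together with $(a^\sigma)^\tau=a^{\sigma^2}$ forces $y=4a$ in $\mathbb{Z}/9\mathbb{Z}$, and therefore $a^{1+2\sigma}=2z$. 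If $z$ vanished, then $a^\sigma=4a\in C_{\mathrm{k},3}^+$ would satisfy $(a^\sigma)^\tau=a^\sigma$, i.e.\ $a^\sigma=a^{\sigma^2}$, whence $a=a^\sigma$, contradicting the non-ambiguity of $a$ established in part (1). Hence $a^{1+2\sigma}$ generates $C_{\mathrm{k},3}^-$, and $\langle a,\,a\cdot(a^\sigma)^2\rangle=C_{\mathrm{k},3}^+\times C_{\mathrm{k},3}^-=C_{\mathrm{k},3}$.
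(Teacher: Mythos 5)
First, a caveat: the paper does not contain its own proof of Theorem \ref{thm:main}; the ``proof'' is a citation of \cite[Theorem 3.5, pp.\ 10--11]{boultim}, so your attempt can only be judged on its own terms, not against an internal argument.

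On those terms, your proposal has a genuine gap, and it sits exactly where the arithmetic hypothesis enters. Your reductions are sound: $\mathcal{R}$ must be taken to be the unique $\tau$-fixed prime above $3$ (so $[\mathcal{R}^h]\in C_{\mathrm{k},3}^{+}$, $3\mathcal{O}_\Gamma=\mathfrak{p}^2\mathfrak{r}$, $\mathfrak{r}\mathcal{O}_{\mathrm{k}}=\mathcal{R}^2$, $\mathfrak{p}\mathcal{O}_{\mathrm{k}}=\mathcal{P}\mathcal{Q}$); Theorem \ref{prop:carrr39} correctly converts ``$[\mathcal{R}^h]$ has order $9$'' into ``$[\mathcal{R}^h]$ is not $\sigma$-fixed''; and applying $N_{\mathrm{k}/\Gamma}$ together with $[\mathfrak{r}]=[\mathfrak{p}]^{-2}$ and $\gcd(h,3)=1$ correctly reduces that to ``the $3$-part of $[\mathfrak{p}]$ generates $C_{\Gamma,3}\simeq\mathbb{Z}/9\mathbb{Z}$.'' But the implication from ``$3$ is not a cubic residue modulo $p$'' to that last statement is the entire content of part (1) --- it is the only point in your argument where the cubic-residue hypothesis can possibly act --- and you explicitly leave it unproved (``I expect this equivalence to be the main obstacle''). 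Naming the two standard routes (cubic reciprocity in $\mathrm{k}_0$, or a Scholz-type reflection/genus-theoretic computation) is not the same as carrying one of them out, and neither is a one-line exercise. Until that implication is established, part (1) is not proved, and part (2) --- which is otherwise a correct, self-contained Galois-module computation, conditional on (1) and on the convention $\sigma(\mathcal{R})=\mathcal{P}$ --- falls with it.

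A secondary point: the theorem names specific primes $\mathcal{P},\mathcal{Q},\mathcal{R}$, and statement (1) is simply false for two of the three possible choices of $\mathcal{R}$ (a class $[\mathcal{P}^h]$ with $\mathcal{P}^{\tau}=\mathcal{Q}\neq\mathcal{P}$ need not lie in $C_{\mathrm{k},3}^{+}$ at all). So ``after relabeling'' should be replaced by an explicit identification of $\mathcal{R}$ as the prime of $\mathrm{k}$ above the prime of $\Gamma$ that is unramified over $3$; this is presumably the convention of \cite{boultim}, and your argument does require it.
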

\begin{proof}
See \cite[Theorem 3.5, pp. 10-11]{boultim}.
\end{proof}





\subsection{Unramified sub-extensions of  $ \mathrm{k}_3^{(1)}/\mathrm{k}$  }
\label{s:Subfields}

Let $\mathrm{k}=\mathbb{Q}(\sqrt[3]{p},\zeta_3)$, where $p$ is a prime such that $p\equiv 1\,(\mathrm{mod}\,9)$, $\mathrm{k}_3^{(1)}$  the $3$-Hilbert class field of $\mathrm{k}_3^{(0)}=\mathrm{k}$, $\mathrm{k}_3^{(2)}$ the $3$-Hilbert class field of $\mathrm{k}_3^{(1)}$, and $G=\mathrm{Gal}(\mathrm{k}^{(2)}_3/\mathrm{k})$. Let $C_{\mathrm{k},3}$ be $3$-ideal class group of  $\mathrm{k}$, then by class field theory, 
$\mathrm{Gal}(\mathrm{k}^{(1)}_3/\mathrm{k}) \simeq C_{\mathrm{k},3}$.

 Assume that $ C_{\mathrm{k},3}$ is of type $(9,3)$. In his paper \cite{93Dan}, Daniel C. Mayer gives the presentations of the metabelian $3$-groups  $G=\langle x,y\rangle$, with two generators satisfying $x^9 \in G^\prime$ and $y^3 \in G^\prime$, where $G/G^\prime$ is of type $(9,3)$. If we note by $(H_{i,j})_{i}$ the family of all normal intermediate groups $G^\prime\subseteq H_{i,j} \subseteq G,$ with $1\leq i\leq 4, \, 
j  \in \lbrace 3, 9 \rbrace $, the $3$-group $G$ has four maximal normal subgroups of index $ 9 $ as follows:
\begin{eqnarray*}
H_{1,9}  =  \langle y,G^\prime\rangle, \
H_{2,9}  =  \langle x^3y,G^\prime\rangle, \
H_{3,9}  = \langle x^3y^{-1},G^\prime\rangle, \
H_{4,9}  =  \langle x^3,G^\prime\rangle, 
\end{eqnarray*}
and four maximal normal subgroups of index $ 3 $ as follows:
\begin{eqnarray*}
H_{1,3} = \langle x,G^\prime\rangle, \
H_{2,3} = \langle xy,G^\prime\rangle, \
H_{3,3} = \langle xy^{-1},G^\prime\rangle, \
H_{4,3} = \langle x^3,y,G^\prime\rangle.
\end{eqnarray*}
It should be noted that
$H_{4,3}= \prod H_{i,9}$,  the quotient group  $H_{4,3}/G^\prime=\langle x^3,y\rangle$ is bi-cyclic of type $(3,3)$,  and
$H_{1,9}=\bigcap H_{i,3}=G^3G^\prime$ coincides with the Frattini subgroup $ \Phi(G) $ of $G$. However, the group $H_{i,9}$ is only contained in $H_{4,3}$, for $1\leq i\leq 3$. These groups are shown in figure \ref{FF2}.

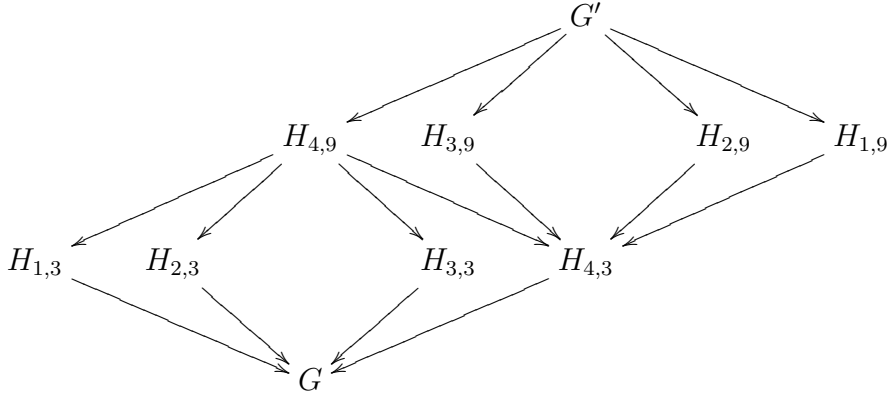
\begin{figure}
\caption{The group $G$ with $G/G^\prime$ of type $(9,3)$}
\label{FF2}
\[
\xymatrix@R=1cm{  
       {}&{}&{}&{} &{}&{}\\
       {}&{}&{}&{} & G^\prime \ar@{->}[rrd] \ar@{->}[rd] \ar@{->}[ld] \ar@{->}[lld]&{}&{}\\       
       {}&{}& H_{4,9}\ar@{->}[rrd] \ar@{->}[rd] \ar@{->}[ld] \ar@{->}[lld] & H_{3,9} \ar@{->}[rd] &{}& H_{2,9} \ar@{->}[ld] & H_{1,9} \ar@{->}[lld]\\
       H_{1,3} \ar@{->}[rrd]&  H_{2,3} \ar@{->}[rd]&{}&  H_{3,3} \ar@{->}[ld] &  H_{4,3} \ar@{->}[lld]&{} \\
       {}&{}&  G &{}&{}\\
       {}& {}& & {}& {}\\
  } \]
  \end{figure}


The aim of the following Theorem is to determine, via the Galois correspondence of $ \mathrm{k}_3^{(1)}/\mathrm{k}$, the family $(K_{i,j})$ of all fields $\mathrm{k}\subseteq K_{i,j}\subseteq \mathrm{k}_3^{(1)}$, where $1\leq i\leq 4, \, 
j \in \lbrace 3, 9 \rbrace$, satisfying $H_{i,j}= \mathrm{Gal}(\mathrm{k}_3^{(2)}/K_{i,j})$, 
 $H_{i,j}/G^\prime\simeq \mathcal{N}_{K_{i,j}/\mathrm{k}}(C_{K_{i,j},3})$, and $H_{i,j}/H_{i,j}^\prime\simeq \mathrm{Gal}((K_{i,j})_3^{(1)}/K_{i,j}) \simeq C_{K_{i,j},3}$.
 
\begin{theorem} \label{thm:interm}
 Let $\mathrm{k}=\mathbb{Q}(\sqrt[3]{p},\zeta_3)$, where $p$ is a prime such that $p\equiv 1\,(\mathrm{mod}\,9)$, $\mathrm{k}_3^{(1)}$  the 3-Hilbert class field of $\mathrm{k}$, and $C_{\mathrm{k},3}$ the $3$-ideal class group of $\mathrm{k}$. \\
 Assume
$C_{\mathrm{k},3}$ is of type $ (9,3)$. Let  $\langle \mathcal{A} \rangle=C_{\mathrm{k},3}^{+}$,
where $\mathcal{A} \in C_{\mathrm{k},3}$ such $\mathcal{A}^{9}=1$
 and $\mathcal{A}^{3} \neq 1$,  \(C_{\mathrm{k},3}^{-}=\langle\mathcal{B}\rangle\), where $ \mathcal{B} \in C_{\mathrm{k},3}$ such that $\mathcal{B}^{3}=1$
 and $\mathcal{B} \neq 1$, and let $C_{\mathrm{k},3}^{(\sigma)}$  the ambiguous ideal class group of
  $\mathrm{k}\vert\mathrm{k}_{0}$ . Then, the  extension
 \(\mathrm{k}_{3}^{(1)}/\mathrm{k}\) admits eight intermediate extensions as follows:
\begin{itemize}
 \item[1)] Four unramified cyclic extensions of degree $ 3 $ denoted $K_{i,3}, \, 1\leq i \leq 4$, given by: 
 \begin{itemize}
 \item The field
\(K_{1,3}\) corresponds by class field theory to
  $C_{\mathrm{k},3}^{+}=\langle \mathcal{A} \rangle $,
\item  The field \(K_{2,3}\) corresponds to
 $\langle \mathcal{A}\mathcal{B} \rangle=\langle \mathcal{A}^{\sigma} \rangle$,
  \item  The field \(K_{3,3}\) corresponds to  $\langle \mathcal{A}\mathcal{B}^2 \rangle=\langle \mathcal{A}^{\sigma^2} \rangle$,
,
\item  The field \(K_{4,3}\) corresponds  to
the principal genus
  $C_{\mathrm{k},3}^{1-\sigma}=\langle \mathcal{A}^3,\mathcal{B} \rangle$.\\
 Furthermore, 
\(K_{4,3}=\mathrm{k}(\sqrt[3]{\pi_1\pi_2^2})=\left(\mathrm{k}/\mathrm{k}_{0}\right)^{*}=\mathrm{k}\Gamma^{*}=
\mathrm{k}\left(\Gamma^{\sigma}\right)^{*}=\mathrm{k}\left(\Gamma^{\sigma^{2}}\right)^{*}\),\\ 
where $\left(\mathrm{k}/\mathrm{k}_{0}\right)^{*}$ is the relative genus field of $\mathrm{k}/\mathrm{k}_{0}$, $\langle\sigma\rangle= \mathrm{Gal}(\mathrm{k}/\mathrm{k}_{0})$, 
$F^{*}$ for a number field $F$ is the absolute genus field of $F$, and $\pi_1, \pi_2$ are primes in $\mathrm{k}_0$ such that $p=\pi_1\pi_2$.
\end{itemize}
\item[2)] Three unramified cyclic extensions of degree $ 9 $ denoted   $K_{i,9}, \, 2\leq i \leq 4$, given by:
\begin{itemize}
\item The field \(K_{2,9}\)
 corresponds  by class field theory to the sub-group \(\langle \mathcal{A}^3\mathcal{B} \rangle\), 
\item The field
$K_{3,9}$ corresponds to
 the  sub-group  \(\langle \mathcal{A}^3\mathcal{B}^2 \rangle \), 
\item The field $K_{1,9}$ corresponds
to the sub-group 
\(C_{\mathrm{k},3}^{-}=\langle\mathcal{B} \rangle\). \\ 
Furthermore,  $K_{1,9}=\mathrm{k}\cdot
{\Gamma_{3}}^{(1)}=\mathrm{k}\cdot
\left(\Gamma^{\sigma}\right)_{3}^{(1)}=\mathrm{k}\cdot
\left(\Gamma^{\sigma^{2}}\right)_{3}^{(1)}$, \\
where ${F_{3}}^{(1)}$ for a number field  $F$ is the  $3$-Hilbert class field of $F$.

\end{itemize}

\item[3)] One bi-cyclic bi-cubic extension of degree $9$, denoted $K_{4,9}$, and given by
 \(K_{4,9}=K_{i,3}\cdot K_{j,3},\, i\neq j  \), which corresponds by class field theory to the ambiguous ideal class group $C_{\mathrm{k},3}^{(\sigma)}=\langle \mathcal{A}^{3}\rangle$   of the extension
  $\mathrm{k}/\mathrm{k}_{0}$.
\end{itemize}
\end{theorem}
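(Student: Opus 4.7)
The plan is to invoke the Artin reciprocity isomorphism $C_{\mathrm{k},3} \simeq \mathrm{Gal}(\mathrm{k}_3^{(1)}/\mathrm{k})$ and convert the classification of subgroups of an abelian group of type $(9,3)$ into the desired classification of intermediate fields. First I enumerate the subgroups: there are four of index $3$ --- three cyclic of order $9$, generated by $\mathcal{A}$, $\mathcal{A}\mathcal{B}$, $\mathcal{A}\mathcal{B}^2$, together with the bicyclic $\langle\mathcal{A}^3,\mathcal{B}\rangle$ --- and four of index $9$, all cyclic of order $3$, generated by $\mathcal{A}^3$, $\mathcal{B}$, $\mathcal{A}^3\mathcal{B}$, $\mathcal{A}^3\mathcal{B}^2$. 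The corresponding quotient $C_{\mathrm{k},3}/H$ is cyclic except when $H = \langle\mathcal{A}^3\rangle$, in which case it is of type $(3,3)$, producing the unique bi-cyclic bi-cubic field $K_{4,9}$; all other index-$9$ subgroups give the cyclic nonic fields $K_{1,9}, K_{2,9}, K_{3,9}$, and all index-$3$ subgroups give the cyclic cubic fields $K_{1,3},\ldots,K_{4,3}$.

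For the alternative descriptions $\langle\mathcal{A}\mathcal{B}\rangle = \langle\mathcal{A}^\sigma\rangle$ and $\langle\mathcal{A}\mathcal{B}^2\rangle = \langle\mathcal{A}^{\sigma^2}\rangle$ in the statement, I use Theorem \ref{prop:carrr39}(3), which gives $C_{\mathrm{k},3}^- = \langle(\mathcal{A}^2)^{\sigma-1}\rangle = \langle\mathcal{B}\rangle$, to expand $\mathcal{A}^\sigma$ in the basis $(\mathcal{A},\mathcal{B})$ and verify that the resulting cyclic subgroups of order $9$ coincide (up to the ambiguity $\mathcal{B}\leftrightarrow\mathcal{B}^2$).

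The two deeper identifications are (a) $K_{4,3} = (\mathrm{k}/\mathrm{k}_0)^* = \mathrm{k}(\sqrt[3]{\pi_1\pi_2^2}) = \mathrm{k}\cdot\Gamma^*$ and its $\sigma$-conjugates, and (b) $K_{1,9} = \mathrm{k}\cdot\Gamma_3^{(1)}$. For (a), Theorem \ref{prop:carrr39}(4) identifies the principal genus $C_{\mathrm{k},3}^{1-\sigma}$ with $\langle\mathcal{A}^3,\mathcal{B}\rangle$; since $(\mathrm{k}/\mathrm{k}_0)^*$ is by definition the fixed field of the principal genus under the Artin map, $K_{4,3} = (\mathrm{k}/\mathrm{k}_0)^*$. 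The explicit generator $\sqrt[3]{\pi_1\pi_2^2}$ then comes from the standard Kummer-theoretic construction of the relative genus field of the cyclic extension $\mathrm{k}_0(\sqrt[3]{\pi_1\pi_2})/\mathrm{k}_0$: separating the two ramified prime factors $\pi_1,\pi_2$ of $p$ yields $\mathrm{k}_0(\sqrt[3]{\pi_1},\sqrt[3]{\pi_2}) = \mathrm{k}(\sqrt[3]{\pi_1\pi_2^2})$. The equality with $\mathrm{k}\cdot\Gamma^*$ (and its $\sigma$-conjugates) then follows from the classical description of the absolute genus field of a pure cubic field $\mathbb{Q}(\sqrt[3]{p})$ with $p \equiv 1\pmod 3$.

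For (b), the hypothesis $9\parallel h(\Gamma)$ gives $C_{\Gamma,3}\simeq\mathbb{Z}/9\mathbb{Z}$, so $\Gamma_3^{(1)}/\Gamma$ is unramified cyclic of degree $9$; since $\mathrm{k}/\Gamma$ ramifies above $3$ while $\Gamma_3^{(1)}/\Gamma$ does not, we have $\Gamma_3^{(1)}\cap\mathrm{k}=\Gamma$, and therefore $\mathrm{k}\cdot\Gamma_3^{(1)}/\mathrm{k}$ is unramified cyclic of degree $9$. Hence it corresponds to a cyclic index-$9$ subgroup $H\leq C_{\mathrm{k},3}$. To pin down $H$, I exploit the fact that $\mathrm{Gal}(\mathrm{k}\cdot\Gamma_3^{(1)}/\Gamma)\simeq\mathrm{Gal}(\Gamma_3^{(1)}/\Gamma)\times\mathrm{Gal}(\mathrm{k}/\Gamma)$ is abelian, so $\tau$ acts trivially on $\mathrm{Gal}(\mathrm{k}\cdot\Gamma_3^{(1)}/\mathrm{k})$; $\tau$-equivariance of Artin reciprocity then forces $(1-\tau)C_{\mathrm{k},3}\subseteq H$, and the computations $(1-\tau)\mathcal{A}=1$, $(1-\tau)\mathcal{B}=\mathcal{B}^2$ yield $(1-\tau)C_{\mathrm{k},3}=C_{\mathrm{k},3}^-=\langle\mathcal{B}\rangle$, so $H=\langle\mathcal{B}\rangle$ by comparing orders. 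Finally, $K_{4,9}=K_{i,3}\cdot K_{j,3}$ ($i\neq j$) corresponds to the intersection of the two relevant index-$3$ subgroups; using $C_{\mathrm{k},3}^+\cap C_{\mathrm{k},3}^-=1$, a direct calculation shows all six such intersections collapse to $\langle\mathcal{A}^3\rangle = C_{\mathrm{k},3}^{(\sigma)}$. The main obstacle is the local analysis confirming that $\mathrm{k}(\sqrt[3]{\pi_1\pi_2^2})/\mathrm{k}$ is unramified at the primes above $3$ (where the hypothesis $p\equiv 1\pmod 9$ enters); everything else reduces to formal manipulations with the Galois correspondence.
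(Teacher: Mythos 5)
Your proposal is correct and follows essentially the same route as the paper: the Artin/Galois correspondence applied to the eight subgroups of the $(9,3)$-group $C_{\mathrm{k},3}$, genus theory together with the Kummer generator $\sqrt[3]{\pi_1\pi_2^2}$ for $K_{4,3}$, and the identification $K_{1,9}=\mathrm{k}\cdot\Gamma_3^{(1)}$. The only local difference is that you pin down the norm subgroup of $\mathrm{k}\cdot\Gamma_3^{(1)}$ via $\tau$-equivariance of the Artin symbol and the computation $(1-\tau)C_{\mathrm{k},3}=\langle\mathcal{B}\rangle$, whereas the paper instead cites Gerth's lemmas $C_{\mathrm{k},3}/C_{\mathrm{k},3}^{-}\simeq C_{\mathrm{k},3}^{+}\simeq C_{\Gamma,3}$ --- an equivalent and, if anything, more self-contained justification (though, like the paper, you leave the unramifiedness of $\mathrm{k}(\sqrt[3]{\pi_1\pi_2^2})/\mathrm{k}$ above $3$ to a citation-level argument, and you do not explicitly verify the conjugate equalities $\mathrm{k}\cdot\Gamma_3^{(1)}=\mathrm{k}\cdot(\Gamma^{\sigma})_3^{(1)}=\mathrm{k}\cdot(\Gamma^{\sigma^2})_3^{(1)}$).
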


\begin{proof}
Assume 
$C_{\mathrm{k},3}$ is of type $ (3,9)$.  Let $\langle \mathcal{A} \rangle=C_{\mathrm{k},3}^{+}$,
where $\mathcal{A} \in C_{\mathrm{k},3}$ such that $\mathcal{A}^{9}=1$
 and $\mathcal{A}^{3} \neq 1$, and let \(C_{\mathrm{k},3}^{-}=\langle\mathcal{B}\rangle\),  where $ \mathcal{B} \in C_{\mathrm{k},3}$ such that $\mathcal{B}^{3}=1$
 and $\mathcal{B} \neq 1$.
 
The results of Theorem \ref{thm:interm} follow immediately, according to the class field theory, from the fact that the 3-ideal class group  $C_{\mathrm{k},3}=\langle
\mathcal{A},\mathcal{B}\rangle$ admits
\begin{itemize}
\item Four sub-groups  $H_{i,3}$ cubic of order $9$, where $1 \leq i \leq 4 $,  ordered as follows:
\begin{itemize}
\item three of these subgroups are cyclic of order $ 9 $, given by:
  \begin{itemize}
  \item[$\bullet$] $H_{1,3}=C_{\mathrm{k},3}^{+}=\langle \mathcal{A}\rangle =
\{ \mathcal{C} \in C_{\mathrm{k},3} \mid \mathcal{C}^{\tau }=\mathcal{C} \} $,
  \item[$\bullet$]  $ H_{2,3}= \langle  \mathcal{A} \mathcal{B}\rangle=\langle
\mathcal{A}^{\sigma}\rangle = 
\{ \mathcal{C} \in C_{\mathrm{k},3} \mid \mathcal{C}^{\tau \sigma }=\mathcal{C} \} $,  
 \item[$\bullet$] $ H_{3,3}= \langle \mathcal{A} \mathcal{B}^{-1} \rangle= \langle \mathcal{A} \mathcal{B}^2 \rangle
 =\langle
 \mathcal{A}^{\sigma^2}\rangle =
\{ \mathcal{C} \in C_{\mathrm{k},3} \mid \mathcal{C}^{\tau \sigma^2 }=\mathcal{C} \} $, 

\end{itemize}

 Then, we have:
      $$C_{\mathrm{k},3}/  H_{1,3} = C_{\mathrm{k},3}/ C_{\mathrm{k},3}^{+} =
     C_{\mathrm{k},3}/ \langle \mathcal{A}\rangle \simeq
      \mathbb{Z}/3\mathbb{Z},$$
      $$C_{\mathrm{k},3}/  H_{2,3} = 
     C_{\mathrm{k},3}/ \langle \mathcal{A}^{\sigma}\rangle \simeq
      \mathbb{Z}/3\mathbb{Z},$$
     $$C_{\mathrm{k},3}/  H_{3,3} = 
     C_{\mathrm{k},3}/ \langle \mathcal{A}^{\sigma^2}\rangle \simeq
      \mathbb{Z}/3\mathbb{Z},$$
      then $ \mathrm{Gal}\left( K_{1,3} / \mathrm{k} \right) \simeq  
      \mathrm{Gal}\left( K_{2,3} / \mathrm{k} \right) \simeq
      \mathrm{Gal}\left( K_{3,3} / \mathrm{k} \right) \simeq \mathbb{Z}/3\mathbb{Z} $, 
       which means that  $K_{1,3}$, $K_{2,3}$, $K_{3,3}$ are unramified cyclic extensions of degree $ 3 $ on $\mathrm{k}$
      corresponds respectively to the subgroups $H_{1,3}=\langle \mathcal{A}\rangle$, $H_{2,3}=\langle \mathcal{A}^{\sigma}\rangle$, $H_{3,3}=\langle \mathcal{A}^{\sigma^2}\rangle$  of $C_{\mathrm{k},3}$.
  \item the fourth subgroup of $C_{\mathrm{k},3}$  is exactly the principal genus $ C_{\mathrm{k},3}^{1-\sigma}$, given by $$ H_{4,3}= C_{\mathrm{k},3}^{1-\sigma}=
C_{\mathrm{k},3}^{(\sigma)} \times C_{\mathrm{k},3}^{-}
 =\langle \mathcal{A}^{3},\mathcal{B}\rangle = \prod_{i=1}^{4}  H_{i,9},$$
 then
  $$C_{\mathrm{k},3}/  H_{4,3} = C_{\mathrm{k},3}/ C_{\mathrm{k},3}^{1-\sigma} \simeq
      \mathbb{Z}/3\mathbb{Z},$$
 and according to genus theory $$ C_{\mathrm{k},3}/ C_{\mathrm{k},3}^{1-\sigma} \simeq \mathrm{Gal}\left( \left(\mathrm{k}/\mathrm{k}_{0}\right)^{*} / \mathrm{k} \right),  $$
 which is exactly is the genus group, for more details see
 \cite[\S 2, page 85]{Ge1976}.\\
 Then, $\mathrm{Gal}\left( K_{4,3} / \mathrm{k} \right) \simeq  \mathrm{Gal}\left( \left(\mathrm{k}/\mathrm{k}_{0}\right)^{*} / \mathrm{k} \right) \simeq \mathbb{Z}/3\mathbb{Z} $, which means that $K_{4,3}=\left(\mathrm{k}/\mathrm{k}_{0}\right)^{*}$ is an unramified cyclic extension of degree $ 3 $ over $\mathrm{k}$ correspond  to the sub-group $H_{4,3}=\langle \mathcal{A}^{3}, \mathcal{B}\rangle$  of $C_{\mathrm{k},3}$. For more details, see Theorem \ref{prop:carrr39}.\\
  Furthermore, as the discriminant of the ring of integers of $\Gamma$ is divisible by a single prime number $ p $ such that $p \equiv 1 \pmod 3$, then by \cite[Corollary 2.1, p. 21]{Is}, we have:
   \begin{eqnarray*}
   \Gamma^{*} & = & M(p).\Gamma, \\ \left({\Gamma^{\sigma}}\right)^{*} & = & M(p).\Gamma^{\sigma}, \\
   \left({\Gamma^{\sigma^2}}\right)^{*} & = & M(p).\Gamma^{\sigma^2},
   \end{eqnarray*}
   
where $\Gamma^{*}$ (respectively $\left({\Gamma^{\sigma}}\right)^{*}$, $\left({\Gamma^{\sigma^2}}\right)^{*}$) is the absolute genus field of  $\Gamma$ (respectively $\Gamma^{\sigma}$, $\Gamma^{\sigma^2}$), and  $M(p)$  is the unique cubic sub-field $\mathbb{Q}(\zeta_{p})$ of degree $3$.
  
By switching to the composition we obtain
    $$\mathrm{k}.\Gamma^{*}=
    \mathrm{k}.\left({\Gamma^{\sigma}}\right)^{*}=\mathrm{k}.\left({\Gamma^{\sigma^2}}\right)^{*} = \mathrm{k}. M(p).$$
 The fact that $ p \equiv 1 \pmod  3$ imply by  \cite[Chap. 9, Sec. 1, Proposition 9.1.4, p.110]{IlRo} that  $p=\pi_1 \pi_2$ with $\pi_1^{\tau}=\pi_2$ and $\pi_1 \equiv \pi_2 \equiv 1 \pmod {3\mathcal{O}_{\mathrm{k}_0}}$, then by  \cite[\S\ 3, Lemma 3.2, p. 56]{Ge1975}, we have $\left(\mathrm{k}/\mathrm{k}_{0}\right)^{*} =\mathrm{k}(\sqrt[3]{\pi_1\pi_2^2})$.\\
 We conclude that $K_{4,3}=\left(\mathrm{k}/\mathrm{k}_{0}\right)^{*}
 =\mathrm{k}\Gamma^{*}=\mathrm{k}(\sqrt[3]{\pi_1\pi_2^2}).$ 
\end{itemize}
    \item Four cyclic cubic subgroups  $H_{i,9}$ of order $3$, where $1 \leq i \leq 4 $, ordered as follows:
        \begin{itemize}
        \item[$\bullet$]  $H_{1,9}=C_{\mathrm{k},3}^{-}=\langle \mathcal{B}\rangle=\{\mathcal{C} \in C_{\mathrm{k},3} \mid
\mathcal{C}^{\tau}=\mathcal{C}^{-1}\}$,
   \item[$\bullet$]  $H_{2,9}=  \langle \mathcal{A}^{3} \mathcal{B}\rangle$,
    \item[$\bullet$] 
   $H_{3,9} =\langle \mathcal{A}^{3}\mathcal{B}^{-1}\rangle=\langle \mathcal{A}^{3}\mathcal{B}^{2}\rangle,$
 \item[$\bullet$] $H_{4,9}=C_{\mathrm{k},3}^{(\sigma)}=\langle \mathcal{A}^{3}\rangle$ is the ambiguous ideal class group of
  $\mathrm{k}/\mathrm{k}_{0}$.
    \end{itemize}
    Furthermore, $H_{4,9}= \bigcap_{i=1}^{4} H_{i,3}$, and for each $1 \leq i \leq 3$ we have $H_{i,9}$ is contained only in $ H_{4,3}$.
    \paragraph{}
  On the one hand, we have
    $$C_{\mathrm{k},3}/H_{1,9} = C_{\mathrm{k},3}/ C_{\mathrm{k},3}^{(\sigma)} =C_{\mathrm{k},3}/ \langle \mathcal{A}^{3}\rangle\simeq
   \mathbb{Z}/3\mathbb{Z}\times \mathbb{Z}/3\mathbb{Z},$$
   then 
   $ \mathrm{Gal}\left( K_{4,9} / \mathrm{k} \right) \simeq \mathbb{Z}/3\mathbb{Z}\times \mathbb{Z}/3\mathbb{Z} $, which signifies that $K_{4,9}$ is an unramified
     bi-cyclic bi-cubic extension of $\mathrm{k}$ corresponding to the sub-group $H_{1,9}=C_{\mathrm{k},3}^{(\sigma)}$ of $C_{\mathrm{k},3}$.\\
   Also, we have
      $$C_{\mathrm{k},3}/  H_{1,9} = C_{\mathrm{k},3}/ C_{\mathrm{k},3}^{-} =
     C_{\mathrm{k},3}/ \langle \mathcal{B}\rangle \simeq
      \mathbb{Z}/9\mathbb{Z},$$
     $$C_{\mathrm{k},3}/  H_{2,9} = 
     C_{\mathrm{k},3}/ \langle \mathcal{A}^3\mathcal{B}\rangle \simeq
      \mathbb{Z}/9\mathbb{Z},$$
      $$C_{\mathrm{k},3}/  H_{3,9} = 
     C_{\mathrm{k},3}/ \langle \mathcal{A}^3\mathcal{B}^2\rangle \simeq
      \mathbb{Z}/9\mathbb{Z},$$
      then $ \mathrm{Gal}\left( K_{1,9} / \mathrm{k} \right) \simeq  
      \mathrm{Gal}\left( K_{2,9} / \mathrm{k} \right) \simeq
      \mathrm{Gal}\left( K_{3,9} / \mathrm{k} \right) \simeq \mathbb{Z}/9\mathbb{Z} $, 
       which means that  $K_{1,9}$, $K_{2,9}$, $K_{3,9}$ are unramified cyclic extensions of degree $9$ over $\mathrm{k}$
      corresponding respectively to the sub-groups $H_{1,9}=\langle \mathcal{B}\rangle$, $H_{2,9}=\langle \mathcal{A}^3\mathcal{B}\rangle$, $H_{3,9}=\langle \mathcal{A}^3\mathcal{B}^2\rangle$ of $C_{\mathrm{k},3}$.
      
       \paragraph{}
     On the other hand, according to \cite[\S\ 2, Lemme 2.1, p. 53]{Ge1975} we have
      $$ C_{\mathrm{k},3}/  H_{1,9} = C_{\mathrm{k},3}/ C_{\mathrm{k},3}^{-} \simeq
      C_{\mathrm{k},3}^{+},$$ 
        by \cite[\S\ 2, Lemma 2.2, p. 53]{Ge1975} we have
       $$C_{\mathrm{k},3}^{+} \simeq C_{\Gamma,3},$$ 
       and according to class field theory  
       $$ C_{\Gamma,3} \simeq
       \mathrm{Gal}\left({\Gamma_3}^{(1)} \vert \Gamma\right),$$ 
      then we obtain
       $$C_{\mathrm{k},3}/  H_{1,9}  \simeq
       \mathrm{Gal}\left({\Gamma_3}^{(1)} \vert \Gamma\right) \simeq
       \mathrm{Gal}\left(\mathrm{k}{\Gamma_3}^{(1)} \vert
       \mathrm{k}\right),$$

      and by class field theory, $\mathrm{k}{\Gamma_3}^{(1)}$ is an unramified cyclic extension of degree $9$ of $\mathrm{k}$
       corresponding to the sub-group $H_{1,9}=C_{\mathrm{k},3}^{-}$ of $C_{\mathrm{k},3}$. Then, $K_{1,9}=\mathrm{k}{\Gamma_3}^{(1)}$. \\
      Now we show that
       $\mathrm{k}{\Gamma_3}^{(1)}=\mathrm{k}\left(\Gamma^{\sigma}\right)_{3}^{(1)}=
       \mathrm{k}\left(\Gamma^{{\sigma^2}}\right)_{3}^{(1)}.$
       Suppose that ${\Gamma_3}^{(1)}\neq \left(\Gamma^{\sigma}\right)_{3}^{(1)}$,
     then ${\Gamma_3}^{(1)}\cdot\left(\Gamma^{\sigma}\right)_{3}^{(1)}
       $ is an unramified extension of ${\Gamma_3}^{(1)}$ different to
       ${\Gamma_3}^{(1)}$. This contradicts the fact that the tower of class fields of $ \Gamma $ stops at the first stage $\Gamma$ since the $3$-ideal class group $C_{\Gamma,3}$ of $\Gamma$ is cyclic.
\end{itemize}
\end{proof}

\section{Capitulation of $3$-ideal classes of $\mathbb{Q}(\sqrt[3]{p},\zeta_3)$ }
\label{s:Capit93}
Let
$\mathrm{k}=\mathbb{Q}(\sqrt[3]{p},\zeta_3)$, where $p$ is a prime number such that $p\equiv 1\,(\mathrm{mod}\,9)$, and
$C_{\mathrm{k},3}$ the $3$-class group of $\mathrm{k}$.
Suppose that $ C_{\mathrm{k},3}$ is of type $(9,3)$. Let $(K_{i,j})$ be the family of all intermediate sub-fields of $\mathrm{k} \subseteq K_{i,j}\subseteq \mathrm{k}_3^{(1)}$,
where $1\leq i\leq 4$ and $j \in \lbrace 3, 9 \rbrace$. 
We denote by $\ker\left(T_{K_{i,j}/\mathrm{k}}\right)$ the
kernel of the  homomorphism 
$T_{K_{i,j}/\mathrm{k}}:\,C_{\mathrm{k},3} \longrightarrow  C_{K_{i,j},3}$ induced by extension of ideals of $\mathrm{k}$ in $K_{i,j}$,
where $K_{i,j}$ is an unramified extension of $\mathrm{k}$ included in $\mathrm{k}_3^{(1)}$. We denote by \(\kappa\) the quartet of Taussky's conditions
\cite{Ta}.
 
\begin{definition}
Let $\mathcal{A}_{i,j}$ be a generator of the sub-group $H_{i,j}$ of $C_{\mathrm{k},3}$,  with $1\leq i\leq 4, \,  j  \in \lbrace 3, 9 \rbrace $ corresponding to the field  $K_{i,j}$. Let  $l_{i} \in \lbrace 0, 1, 2, 3, 4 \rbrace $ with $1\leq i\leq 4$.\\
We will say that the capitulation is of type $\lbrace  l_1, l_2, l_3, l_4 \rbrace $ to express the fact that when $l_{i}=n$ for one $n \in \lbrace  1, 2, 3, 4 \rbrace  $, then only the class $\mathcal{A}_{n,9}$ and its powers capitulate in  $K_{i,3}$. If all classes capitulate in $K_{i,3}$ then we put $l_{i}=0$.
\end{definition}

\paragraph{}
The main result of this paper is as follows:

\begin{theorem}
\label{Capitulation93}
Let $\mathrm{k}=\mathbb{Q}(\sqrt[3]{p},\zeta_3)$,
where $p$ is a prime number such that $p\equiv 1\,(\mathrm{mod}\,9)$,
and $C_{\mathrm{k},3}$ is the $3$-class group of $\mathrm{k}$.
Suppose that $C_{\mathrm{k},3}$ is of type $(9,3)$.
Put $\langle\mathcal{A}\rangle=C_{\mathrm{k},3}^{+}$, where $\mathcal{A}\in C_{\mathrm{k},3}$
such that $\mathcal{A}^{9}=1$ and $\mathcal{A}^{3}\neq 1$.
Then:
\begin{enumerate}
\item
\begin{enumerate}
\item
\(K_{1,3}^{\sigma}=K_{2,3}\), \(K_{2,3}^{\sigma}=K_{3,3}\) and \(K_{3,3}^{\sigma}=K_{1,3}\)
($\sigma$ permutes $K_{1,3}$, $K_{2,3}$ and $K_{3,3}$).
\item
\(K_{1,3}^{\tau}=K_{1,3}\), \(K_{2,3}^{\tau}=K_{3,3}\) and \(K_{3,3}^{\tau}=K_{2,3}\),
\item
\(K_{4,9}^{\tau}=K_{4,9}\), \(K_{2,9}^{\tau}=K_{3,9}\) and \(K_{3,9}^{\tau}=K_{2,9}\),
\end{enumerate}
for all continuations of the automorphisms $\sigma$ and $\tau$.
\item
The three classes $\mathcal{A}$, $\mathcal{A}^{\sigma}$ and $\mathcal{A}^{\sigma^2}$
do not capitulate in \(K_{i,3}\), for \(1\leq i\leq 4\).
\item 
Exactly the class \(\mathcal{A}^{3}\) and its powers capitulate in
\(K_{4,3}\), and $\mathrm{ker}\left(T_{K_{4,3}/\mathrm{k}}\right)=\langle\mathcal{A}^3\rangle$, where
$T_{K_{4,3}/\mathrm{k}}:\,C_{\mathrm{k},3}\longrightarrow C_{K_{4,3},3}$ is the homomorphism induced by extension of ideals from $\mathrm{k}$ to $K_{4,3}$.
\item
The fields \(K_{2,3}\) and \(K_{3,3}\) have the same order of the capitulation kernel.
\item
The three classes $\mathcal{A}$, $\mathcal{A}^{\sigma}$ and $\mathcal{A}^{\sigma^2}$ capitulate in \(K_{1,9}\).
\item
The fields \(K_{2,9}\) and \(K_{3,9}\) have the same order of the capitulation kernel.
\item
Possible types of capitulation in $K_{i,3}$, $1\leq i\leq 4$, are
$(4,4,4;4)$, $(1,2,3;4)$ and $(0,0,0;4)$.
Possible Taussky types in $K_{i,3}$, $1\leq i\leq 4$, are
$\mathrm{(AAA;A)}$ or $\mathrm{(BBB;A)}$.
\end{enumerate}
\end{theorem}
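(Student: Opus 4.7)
The overall strategy is to translate every assertion about capitulation of ideal classes into a statement about the Artin transfer, using the class-field-theoretic correspondence between the subfields $K_{i,j}$ of $\mathrm{k}_3^{(1)}/\mathrm{k}$ and the subgroups $H_{i,j}$ of $C_{\mathrm{k},3}=\langle\mathcal{A}\rangle\times\langle\mathcal{B}\rangle$ established in Theorem \ref{thm:interm}. Throughout, I would exploit the $S_3$-relation $\tau\sigma\tau^{-1}=\sigma^{-1}$ together with the $\tau$-equations $\mathcal{A}^{\tau}=\mathcal{A}$ and $\mathcal{B}^{\tau}=\mathcal{B}^{-1}$ coming from Theorem \ref{prop:carrr39}.

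Part (1) reduces to tracking generators: for instance $H_{1,3}^{\sigma}=\langle\mathcal{A}^{\sigma}\rangle=H_{2,3}$, while $H_{2,3}^{\tau}=\langle(\mathcal{A}^{\sigma})^{\tau}\rangle=\langle\mathcal{A}^{\sigma^{-1}}\rangle=H_{3,3}$, and analogously for the remaining equalities; the $\tau$-stability of $H_{4,9}=\langle\mathcal{A}^{3}\rangle$ and $H_{4,3}=\langle\mathcal{A}^{3},\mathcal{B}\rangle$ is immediate from $\mathcal{A}^{\tau}=\mathcal{A}$. Parts (4) and (6) then drop out at once: because $K_{2,j}^{\tau}=K_{3,j}$ for $j\in\{3,9\}$, the automorphism $\tau$ induces an isomorphism $C_{K_{2,j},3}\to C_{K_{3,j},3}$ carrying $\ker(T_{K_{2,j}/\mathrm{k}})$ onto $\ker(T_{K_{3,j}/\mathrm{k}})$, so the two kernels share the same cardinality.

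For part (3), the field $K_{4,3}=(\mathrm{k}/\mathrm{k}_{0})^{*}$ is the relative genus field of the cyclic cubic extension $\mathrm{k}/\mathrm{k}_{0}$, so the Tannaka--Terada theorem recalled in the introduction forces every $\sigma$-ambiguous class to capitulate in $K_{4,3}$; by Theorem \ref{prop:carrr39}(2) this yields $\langle\mathcal{A}^{3}\rangle\subseteq\ker(T_{K_{4,3}/\mathrm{k}})$. The reverse inclusion I would obtain from (2), which excludes classes of order $9$ from any capitulation kernel, combined with a $\tau$-equivariance argument: since $K_{4,3}$ is $\tau$-stable its kernel is $\tau$-invariant, ruling out $\langle\mathcal{A}^{3}\mathcal{B}^{\pm 1}\rangle$, and then Suzuki's divisibility bound rules out the full principal genus. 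For part (5), I would combine the equalities $K_{1,9}=\mathrm{k}\,\Gamma_{3}^{(1)}=\mathrm{k}\,(\Gamma^{\sigma})_{3}^{(1)}=\mathrm{k}\,(\Gamma^{\sigma^{2}})_{3}^{(1)}$ with the isomorphism $C_{\Gamma,3}\simeq C_{\mathrm{k},3}^{+}=\langle\mathcal{A}\rangle$ invoked in the proof of Theorem \ref{thm:interm}: Furtw\"angler's principal ideal theorem makes every class of $\Gamma$ (and of $\Gamma^{\sigma}$, $\Gamma^{\sigma^{2}}$) capitulate in the corresponding $3$-Hilbert class field, and composition with $\mathrm{k}$ kills the lifts of $\mathcal{A}$, $\mathcal{A}^{\sigma}$ and $\mathcal{A}^{\sigma^{2}}$ in $K_{1,9}$.

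The main obstacle is parts (2) and (7), which demand genuine group-theoretic information about $G=\mathrm{Gal}(\mathrm{k}_{3}^{(2)}/\mathrm{k})$. My plan is to combine the two-generator presentations $G=\langle x,y\rangle$ with $x^{9},y^{3}\in G'$ recalled before Figure \ref{FF2} with the $S_3$-action on $G$ forced by the continuation of $\tau$ (so $G$ carries an involution realising $\mathcal{A}^{\tau}=\mathcal{A}$, $\mathcal{B}^{\tau}=\mathcal{B}^{-1}$). I would then compute the four Artin transfers $\mathrm{Ver}_{G\to H_{i,3}}\colon G/G'\to H_{i,3}/H_{i,3}'$ via the standard metabelian transfer formulas and, using Mayer's descendant-tree classification of the admissible metabelian $3$-groups with abelianisation of type $(9,3)$, eliminate those whose kernels contain $\mathcal{A}$, $\mathcal{A}^{\sigma}$ or $\mathcal{A}^{\sigma^{2}}$ (which establishes (2)) and those that fail to realise one of the three patterns in (7). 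The Taussky dichotomy $\mathrm{(AAA;A)}$ versus $\mathrm{(BBB;A)}$ is then read off according to whether $C_{\mathrm{k},3}^{(\sigma)}=\langle\mathcal{A}^{3}\rangle$ is contained in the transfer kernel on each of the conjugate fields $K_{1,3},K_{2,3},K_{3,3}$. I expect the delicate step to be showing that the $S_3$-action on $G$ together with the $(9,3)$-abelianisation constraint permits exactly the three capitulation patterns listed, supported by the exhaustive computer search mentioned in the abstract.
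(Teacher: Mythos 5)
Your treatment of (1), (4), (5) and (6) coincides with the paper's: track the generators $\langle\mathcal{A}\rangle$, $\langle\mathcal{A}^{\sigma}\rangle$, $\langle\mathcal{A}^{\sigma^2}\rangle$, $\langle\mathcal{A}^3,\mathcal{B}\rangle$ under $\sigma$ and $\tau$, transport the kernels along the isomorphisms $K_{2,j}^{\tau}=K_{3,j}$, and use the principal ideal theorem in $\Gamma_3^{(1)}$ composed up to $K_{1,9}=\mathrm{k}\Gamma_3^{(1)}$. However, you have misjudged where the difficulty lies. Part (2) requires no information about $G=\mathrm{Gal}(\mathrm{k}_3^{(2)}/\mathrm{k})$ at all: for an unramified cyclic cubic extension the composite $N_{K_{i,3}/\mathrm{k}}\circ T_{K_{i,3}/\mathrm{k}}$ is the cubing map on $C_{\mathrm{k},3}$, so every capitulating class has order dividing $3$, and $\mathcal{A}$, $\mathcal{A}^{\sigma}$, $\mathcal{A}^{\sigma^2}$ have order $9$. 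Your plan to establish (2) and (7) by computing Artin transfers over Mayer's descendant trees is left entirely unexecuted (you yourself defer the ``delicate step'' to the computer search), whereas the paper needs none of it: once (2) and (3) are in hand, the kernel in each $K_{i,3}$, $i\le 3$, is a $\tau$-stable subgroup of the elementary group $\langle\mathcal{A}^3,\mathcal{B}\rangle$, the three kernels are $\sigma$-conjugates of one another by (1)(a), and the identity $\mathcal{B}^{\sigma}\in\langle\mathcal{A}^3\mathcal{B}^2\rangle$ turns the case ``$\mathcal{B}$ capitulates in $K_{1,3}$'' into the pattern $(1,2,3;4)$; enumerating the cases gives exactly the three types and the two Taussky patterns.

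The genuine gap is in part (3). Tannaka--Terada gives $\langle\mathcal{A}^3\rangle\subseteq\ker(T_{K_{4,3}/\mathrm{k}})$, and by (2) the kernel lies in $\langle\mathcal{A}^3,\mathcal{B}\rangle$; so the only remaining issue is to exclude the kernel being the whole principal genus, i.e.\ to show $\mathcal{B}$ does not capitulate in $K_{4,3}$. Neither of your tools does this. The subgroup $\langle\mathcal{B}\rangle$ satisfies $\langle\mathcal{B}\rangle^{\tau}=\langle\mathcal{B}^{-1}\rangle=\langle\mathcal{B}\rangle$, so $\tau$-equivariance is silent about it (it only eliminates $\langle\mathcal{A}^3\mathcal{B}^{\pm1}\rangle$, which are already irrelevant because the kernel contains $\langle\mathcal{A}^3\rangle$). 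And Suzuki's theorem is a \emph{lower} bound --- the degree $\lbrack K_{4,3}:\mathrm{k}\rbrack$ divides the number of capitulating classes --- so it is perfectly consistent with all nine classes of $\langle\mathcal{A}^3,\mathcal{B}\rangle$ capitulating; likewise the Hilbert 94 formula $\#\ker=\lbrack K_{4,3}:\mathrm{k}\rbrack\cdot\lbrack E_{\mathrm{k}}:N(E_{K_{4,3}})\rbrack$ permits $\#\ker=9$ unless one computes the unit index, which you do not. The missing idea is the paper's: from $\mathcal{B}=(\mathcal{A}^2)^{\sigma-1}$ and $(\mathcal{A}^3)^{\sigma-1}=1$ one gets $\mathcal{B}^{\sigma}=\mathcal{A}^{1+2\sigma}$, so if $\mathcal{B}$ capitulated in the $\sigma$-stable field $K_{4,3}$ then $T_{K_{4,3}/\mathrm{k}}(\mathcal{A}^{1+2\sigma})=1$, which together with $T_{K_{4,3}/\mathrm{k}}(\mathcal{A}^3)=1$ forces $T_{K_{4,3}/\mathrm{k}}(\mathcal{A}^{\sigma})=T_{K_{4,3}/\mathrm{k}}(\mathcal{A})$ and hence $\Gamma_3^{(1)}=(\Gamma^{\sigma})_3^{(1)}$, a contradiction. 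Without this step (or a substitute such as proving $\lbrack E_{\mathrm{k}}:N(E_{K_{4,3}})\rbrack=1$), part (3) is unproved, and with it the fixed fourth component ``$4$'' in every capitulation type claimed in part (7).
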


\begin{proof}
Let $C_{\mathrm{k},3}^{(\sigma)}$ be the $3$-ambigous class group of $\mathrm{k}/\mathrm{k}_0$ and \(C_{\mathrm{k},3}^{1-\sigma}=\{ \mathcal{A}^{1-\sigma}\, \vert\,
\mathcal{A} \in C_{\mathrm{k},3} \}\)  the principal genus of  $C_{\mathrm{k},3}$.\\
By Theorem \ref{prop:carrr39} we have
\(C_{\mathrm{k},3}^{(\sigma)}=\langle\mathcal{A}^{3}\rangle=\langle\mathcal{B}^{1-\sigma}\rangle\), and  
\(C_{\mathrm{k},3}^{1-\sigma}= C_{\mathrm{k},3}^{-}\times
C_{\mathrm{k},3}^{(\sigma)} = \langle \mathcal{B},\mathcal{A}^{3}\rangle \) is a $3$-group of  $C_{\mathrm{k},3}$ of type \((3,3)\), where $ \mathcal{B} \in C_{\mathrm{k},3}$ such that \(C_{\mathrm{k},3}^{-}=\langle\mathcal{B}\rangle =\langle(\mathcal{A}^{2})^{\sigma-1}\rangle\).
\begin{enumerate}
\item
We will agree that for all $i$, $1\leq i\leq 4$, $j=3$ or $9$, and for all $\omega \in \mathrm{Gal}\left( k \vert \mathbb{Q}\right)$, $H_{i,j}^{\omega}=\lbrace \mathcal{C}^{\omega} / \mathcal{C} \in H_{i,j}\rbrace.$
\begin{enumerate}
\item
According to Theorem \ref{thm:interm}, $H_{1,3}=C_{\mathrm{k},3}^{+}=\langle\mathcal{A}\rangle,
 H_{2,3}= \{\mathcal{C} \in C_{\mathrm{k},3} \vert
\mathcal{C}^{\tau\sigma}=\mathcal{C}\}=\langle\mathcal{A}^{\sigma}\rangle,$ and $ H_{3,3}= \{\mathcal{C} \in C_{\mathrm{k},3} \vert
\mathcal{C}^{\tau\sigma^{2}}=\mathcal{C}\}=\langle\mathcal{A}^{\sigma^2}\rangle.   $
Then, \(H_{1,3}^{\sigma}=H_{2,3}\),
\(H_{2,3}^{\sigma}=H_{3,3}\) and \(H_{3,3}^{\sigma}=H_{1,3}\)  ($\sigma$ permutes $H_{1,3}$, $H_{2,3}$ and $H_{3,3}$).

\item
As $H_{1,3}=C_{\mathrm{k},3}^{+}=\{ \mathcal{C} \in C_{\mathrm{k},3} \, \vert\,
\mathcal{C}^{\tau}=\mathcal{C}  \}$, then \(H_{1,3}^{\tau}=H_{1,3}\).
 We have 
$H_{2,3}^{\tau}= \langle(\mathcal{A}^{\sigma})^{\tau}\rangle=\langle\mathcal{A}^{\tau\sigma}\rangle $, and since $\mathcal{A}^{\tau\sigma}=\mathcal{A}^{\sigma^2\tau}=(\mathcal{A}^{\tau})^{\sigma^2}=\mathcal{A}^{\sigma^2} \in H_{3,3}$, then $H_{2,3}^{\tau}=H_{3,3}.$
$H_{3,3}^{\tau}= \langle(\mathcal{A}^{\sigma^2})^{\tau}\rangle=\langle\mathcal{A}^{\tau\sigma^2}\rangle=\langle\mathcal{A}^{\sigma\tau}\rangle=\langle(\mathcal{A}^{\tau})^{\sigma}\rangle=\langle\mathcal{A}^{\sigma}\rangle $, then $H_{3,3}^{\tau}=H_{2,3}$.

\item
We reason as in $(2)$. \(H_{1,9}^{\tau}=H_{1,9}\) because $H_{1,9}=C_{\mathrm{k},3}^{-}=\{ \mathcal{C} \in C_{\mathrm{k},3} \, \vert\,
\mathcal{C}^{\tau}=\mathcal{C}^{-1}\}$. We have $H_{2,9}=\langle \mathcal{A}^3\mathcal{B} \rangle $, and
$ H_{3,9}=\langle \mathcal{A}^3\mathcal{B}^{2} \rangle $, and since
  \(\mathcal{A}^{\tau}=\mathcal{A}\) and \(\mathcal{B}^{\tau}=\mathcal{B}^{-1}=\mathcal{B}^{2}\),
then \(H_{2,9}^{\tau}=H_{3,9}\) and \(H_{3,9}^{\tau}=H_{2,9}\). 

\end{enumerate}
The relations between the fields $K_{i,j}$ in (1) are nothing else than the translations of the corresponding relations for the sub-groups $H_{i,j}$ via class field theory.

\item
For each \(1\leq i\leq 4\), \(K_{i,3}\) is an unramified cyclic extension of degree $3$ over \(\mathrm{k}\). It is  clear that for each class \(\mathcal{C}  \in C_{\mathrm{k},3}\) we have
\(\mathcal{C}^{3}=(N_{K_{i,3}/ \mathrm{k}}\circ T_{K_{i,3}/\mathrm{k}})(\mathcal{C} )\).
If the class $\mathcal{C} $ capitulates in \(K_{i,3}\),
then  $ T_{K_{i,3}/\mathrm{k}}(\mathcal{C})=1$ and $\mathcal{C}^3=1$. 
We conclude that the ideal classes which capitulate in \(K_{i,3}\) are of order \(3\). Since the
classes \(\mathcal{A}, \, \mathcal{A}^{\sigma}
 \), and $\mathcal{A}^{\sigma^2}$ are of order
\(9\), then these classes cannot capitulate in \(K_{i,3}\).

\item
By Theorem \ref{thm:interm} we have \(K_{4,3}=\left(\mathrm{k}/\mathrm{k}_{0}\right)^{*}\) is the relative genus field of $\mathrm{k}/\mathrm{k}_{0}$,  and by Theorem \ref{prop:carrr39} we have
\(\langle\mathcal{A}^{3}\rangle=\langle\mathcal{A}^{3\sigma}\rangle=\langle\mathcal{A}^{3\sigma^2}\rangle=C_{\mathrm{k},3}^{(\sigma)}\). We conclude according to Tannaka-Terada theorem \cite{Tannaka-Terada}, that all ambiguous ideal classes of $C_{\mathrm{k},3}$ capitulate in the relative genus field $\left(\mathrm{k}/\mathrm{k}_{0}\right)^{*}$.  Thus, the class \(\mathcal{A}^{3} \) and its powers capitulate in
\(K_{4,3}\). 
We shall prove that the unique  classes which capitulate in \(K_{4,3}\) are only the ambiguous ideal classes. We have \(C_{\mathrm{k},3}^{-}=\langle\mathcal{B}\rangle =\langle(\mathcal{A}^{2})^{\sigma-1}\rangle\) and 
\(C_{\mathrm{k},3}^{(\sigma)}=\langle\mathcal{A}^{3}\rangle
=\langle\mathcal{B}^{1-\sigma}\rangle\). On the one hand we have
\begin{equation*}
 \mathcal{A}^{1+2\sigma} = \mathcal{A}^{\sigma(1-\sigma)} = ((\mathcal{A}^{-1})^{\sigma-1})^{\sigma} = ((\mathcal{A}^{2})^{\sigma-1})^{\sigma} = \mathcal{B}^{\sigma}
\end{equation*}
because $(\mathcal{A}^{3})^{\sigma-1}=1$. One the other hand, we have
\begin{equation*}
 \mathcal{B}^{1-\sigma} \mathcal{B}^{2} = \mathcal{B}^{3-\sigma} = \mathcal{B}^{-\sigma}
\end{equation*}
then
\begin{equation*}
 (\mathcal{B}^{1-\sigma} \mathcal{B}^{2})^{-1} = \mathcal{B}^{\sigma} 
\end{equation*}
So we get
\begin{equation*}
  \mathcal{B}^{\sigma} \in  \langle \mathcal{B}^{1-\sigma} \mathcal{B}^2 \rangle = \langle \mathcal{A}^{3} \mathcal{B}^2 \rangle
\end{equation*}
because $\langle\mathcal{A}^{3}\rangle
=\langle\mathcal{B}^{1-\sigma}\rangle$.
Since
$C_{\mathrm{k},3}= \langle \mathcal{A}, \mathcal{B} \rangle$ is of type $(9,3)$ , then a class $\chi \in C_{\mathrm{k},3}$ of order 3 capitulates in  
the cubic cyclic unramified  extension \(K_{4,3}/\mathrm{k}\), if and only if the $ \mathcal{B}$
capitulates in the extension \(K_{4,3}/\mathrm{k}\),
because a class $\chi $ of order 3 is in one of the subgroup
$\langle \mathcal{B} \rangle$, $\langle \mathcal{A}^3 \mathcal{B} \rangle$,  $\langle \mathcal{A}^3 \mathcal{B}^2 \rangle$ and that $\mathcal{B}^{\sigma} \in  \langle \mathcal{A}^{3} \mathcal{B}^2 \rangle$. \\
If $ \mathcal{B}$
capitulates in the extension \(K_{4,3}/\mathrm{k}\), then
$ \mathcal{B}^{\sigma}$
capitulates also in  \(K_{4,3}/\mathrm{k}\). Since $\mathcal{A}^{1+2\sigma}=\mathcal{B}^{\sigma}$, then
$\mathcal{A}^{1+2\sigma}$ capitulates also in  \(K_{4,3}/\mathrm{k}\),
so $T_{K_{4,3}/\mathrm{k}}\left( \mathcal{A}^{1+2\sigma}\right)=1$. Then
\begin{equation*}
\left( T_{K_{4,3}/\mathrm{k}}\left( \mathcal{A}^{\sigma}\right)\right)^2=
T_{K_{4,3}/\mathrm{k}}\left( \mathcal{A}^{-1}\right)= T_{K_{4,3}/\mathrm{k}}\left( \mathcal{A}^{2}\right)
\end{equation*}
because $T_{K_{4,3}/\mathrm{k}}\left( \mathcal{A}^{3}\right)=1$, so
\begin{equation*}
\left( T_{K_{4,3}/\mathrm{k}}\left( \mathcal{A}^{\sigma}\right)\right)^2= \left( T_{K_{4,3}/\mathrm{k}}\left( \mathcal{A}\right)\right)^2
\end{equation*}
and then
\begin{equation*}
 T_{K_{4,3}/\mathrm{k}}\left( \mathcal{A}^{\sigma}\right)=  T_{K_{4,3}/\mathrm{k}}\left( \mathcal{A}\right)
\end{equation*}
so we get $\left(\Gamma^\prime_{3}\right)^{(1)}=\Gamma_{3}^{(1)}$, where $\Gamma_{3}^{(1)}$ (resp. $\left(\Gamma^\prime_{3}\right)^{(1)}=\left(\Gamma^{\sigma}_{3}\right)^{(1)}$) is the $3$-Hilbert class field of $\Gamma$ (resp. $\Gamma^\prime$), which is a contradiction.\\
Thus  $\mathcal{B}$ does not capitulate in  \(K_{4,3}/\mathrm{k}\), and then only $\mathcal{A}^3$ and its powers capitulate in \(K_{4,3}/\mathrm{k}\).

\item
\(K_{2,3}\) and \(K_{3,3}\) have the same order of the capitulation kernel, because $K_{2,3}$ and $K_{3,3}$ are isomorphic by (1)(b).

\item
Let $\mathcal{I}$ be an ideal of $\Gamma$ whose class generates $C_{\Gamma,3}$.
We know then that we can take $\mathcal{A}=[T_{\mathrm{k}/\Gamma}(\mathcal{I})]$,
that the class of $\mathcal{I}^{\sigma}$ (in $\Gamma^{\sigma}=\Gamma^\prime$) generates $C_{\Gamma^\prime,3}$, 
that the class of $\mathcal{I}^{\sigma^2}$ (in $\Gamma^{\sigma^2}=\Gamma^{\prime\prime}$) generates $C_{\Gamma^{\prime\prime},3}$, and that 
$\mathcal{A}^{\sigma}=[T_{\mathrm{k}/\Gamma}(\mathcal{I})^{\sigma}]=[T_{\mathrm{k}/\Gamma^\prime}(\mathcal{I}^{\sigma})]$
and $\mathcal{A}^{\sigma^2}=[T_{\mathrm{k}/\Gamma}(\mathcal{I})^{\sigma^2}]=[T_{\mathrm{k}/\Gamma^{\prime\prime}}(\mathcal{I}^{\sigma^2})]$.

Since $\Gamma_{3}^{(1)}$ (resp. $\left(\Gamma^\prime_{3}\right)^{(1)}$ and $\left(\Gamma^{\prime\prime}_{3}\right)^{(1)}$)
is the $3$-Hilbert class field of $\Gamma$ (resp. $\Gamma^\prime$ and $\Gamma^{\prime\prime}$),
then $\mathcal{I}$ (resp. $\mathcal{I}^{\sigma}$ and $\mathcal{I}^{\sigma^2}$) becomes principal
in $\Gamma_{3}^{(1)}$ (resp. $\left(\Gamma^\prime_{3}\right)^{(1)}$ and $\left(\Gamma^{\prime\prime}_{3}\right)^{(1)}$).
Thus, when $\mathcal{I}$ (resp. $\mathcal{I}^{\sigma}$ and $\mathcal{I}^{\sigma^2}$) is considered as an ideal of
$\mathrm{k}.\Gamma_{3}^{(1)}$ (resp. $\mathrm{k}.\left(\Gamma^\prime_{3}\right)^{(1)}$ and $\mathrm{k}.\left(\Gamma^{\prime\prime}_{3}\right)^{(1)}$), $\mathcal{I}$ (resp. $\mathcal{I}^{\sigma}$ and $\mathcal{I}^{\sigma^2}$) becomes principal in
$\mathrm{k}.\Gamma_{3}^{(1)}$ (resp. $\mathrm{k}.\left(\Gamma^\prime_{3}\right)^{(1)}$ and $\mathrm{k}.\left(\Gamma^{\prime\prime}_{3}\right)^{(1)}$). 
So 
$\mathcal{A}$ (resp. $\mathcal{A}^{\sigma}$ and $\mathcal{A}^{\sigma^2}$)  capitulates in
$\mathrm{k}.\Gamma_{3}^{(1)}$ (resp. $\mathrm{k}.\left(\Gamma^\prime_{3}\right)^{(1)}$ and $\mathrm{k}.\left(\Gamma^{\prime\prime}_{3}\right)^{(1)}$).
Since
$\mathrm{k}.\Gamma_{3}^{(1)}=\mathrm{k}.\left(\Gamma^\prime_{3}\right)^{(1)}=\mathrm{k}.\left(\Gamma^{\prime\prime}_{3}\right)^{(1)}=K_{1,9}$, then 
the classes $\mathcal{A}, \,\mathcal{A}^{\sigma}$ and $\mathcal{A}^{\sigma^2}$ capitulate in \(K_{1,9}\).

\item
This assertion follows from the fact that $K_{2,9}$ and $K_{3,9}$ are isomorphic by (1)(c).

\item 
The possible types of capitulation  in $K_{i,3},\, 1\leq i\leq 4$ are $(4,4,4;4)$, $(1,2,3;4)$ and $(0,0,0;4)$,
and the possible Taussky types are (AAA;A) or (BBB;A). In fact, since exactly the class \(\mathcal{A}^{3} \) and its powers capitulate in
\(K_{4,3}\), then:
\begin{itemize}
\item[$(i)$] for each $j \in \lbrace 1,2,3 \rbrace$, if all ideal classes  $\mathcal{C}$ of $C_{\mathrm{k},3}$ of order  3  capitulate in $K_{j,3}/\mathrm{k}$, then  the type of capitulation is $(0,0,0;4)$, and the Taussky type is (AAA;A).
\item[$(ii)$] If exactly the class $\mathcal{C}$ of $C_{\mathrm{k},3}$  of order 3 capitulates in the extension $K_{1,3}/\mathrm{k}$, then exactly one class of $C_{\mathrm{k},3}$ of order 3 and its powers capitulate in the extensions $K_{2,3}/\mathrm{k}$  and $K_{3,3}/\mathrm{k}$, because by (1)(a) we have $K_{1,3}^{\sigma}=K_{2,3}$, $K_{2,3}^{\sigma}=K_{3,3}$, and $K_{3,3}^{\sigma}=K_{1,3}$. Then, there are two cases:
\begin{itemize}
\item If exactly the class $ \mathcal{A}^{3}$ capitulates in $K_{1,3}/\mathrm{k}$, then exactly the class $ (\mathcal{A}^{3})^{\sigma}$ capitulates in  $K_{2,3}/\mathrm{k}$ and exactly the class $ (\mathcal{A}^{3})^{\sigma^2}$ capitulates in  $K_{3,3}/\mathrm{k}$.
In this case, the possible type of capitulation is $(4,4,4;4)$,  and the Taussky type is (AAA;A).
\item We have \(C_{\mathrm{k},3}^{-}=\langle\mathcal{B}\rangle\), and
\(C_{\mathrm{k},3}^{(\sigma)}=\langle\mathcal{A}^{3}\rangle
=\langle\mathcal{B}^{1-\sigma}\rangle\) by Theorem \ref{prop:carrr39}. Then

\begin{eqnarray*}
 \mathcal{B}^{1-\sigma} \mathcal{B}^{2} & = & \mathcal{B}^{3-\sigma} \\
 & = & \mathcal{B}^{-\sigma},
\end{eqnarray*}

that is

\begin{eqnarray*}
 (\mathcal{B}^{1-\sigma} \mathcal{B}^{2})^{-1} & = & \mathcal{B}^{\sigma}.
\end{eqnarray*}

We get
\begin{equation*}
  \mathcal{B}^{\sigma} \in  \langle \mathcal{B}^{1-\sigma} \mathcal{B}^2 \rangle = \langle \mathcal{A}^{3} \mathcal{B}^2 \rangle,
\end{equation*}

because $\langle\mathcal{A}^{3}\rangle
=\langle\mathcal{B}^{1-\sigma}\rangle$.
\\

If exactly the class $ \mathcal{B}$ capitulates in $K_{1,3}/\mathrm{k}$, then exactly the class $ \mathcal{B}^{\sigma}$ capitulates in $K_{2,3}/\mathrm{k}$ and exactly the class $ \mathcal{B}^{\sigma^2}$ capitulates in $K_{3,3}/\mathrm{k}$. Then, exactly the class $ \mathcal{A}^{3}\mathcal{B}^{2}$ capitulates in  $K_{2,3}/\mathrm{k}$ and exactly the class $ \mathcal{A}^{3}\mathcal{B}$ capitulates in  $K_{3,3}/\mathrm{k}$.\\
 In this case, the possible type of capitulation is  $(1,2,3;4)$,  and the Taussky type is (BBB;A).
\end{itemize}

\end{itemize}

\end{enumerate}

\end{proof} 


\section{$3$-class field tower of $\mathbb{Q}(\sqrt[3]{p},\zeta_3)$ }
\label{s:Tower}

\noindent
Let \(\mathrm{k}=\mathbb{Q}(\sqrt[3]{p},\zeta_3)\) be the normal closure
of the pure cubic field \(\Gamma=\mathbb{Q}(\sqrt[3]{p})\)
with prime radicand \(p\equiv 1\,(\mathrm{mod}\,9)\)
of \textit{Dedekind's second species}.
Then \(\mathrm{k}\) is a pure metacyclic field with absolute group
\(\mathrm{Gal}(\mathrm{k}/\mathbb{Q})\simeq S_3\)
the symmetric group of order six.
Assume that \(\mathrm{k}\) possesses a \(3\)-class group
\(C_{\mathrm{k},3}\simeq C_9\times C_3\).
Consequently, the \(3\)-class group of \(\Gamma\) is
\(C_{\Gamma,3}\simeq C_9\),
according to Theorem
\ref{39},
and \(\Gamma\) is of \textit{principal factorization type} \(\alpha\),
in the sense of
\cite{AMITA2020}.

In Theorem
\ref{Capitulation93},
we investigated the principalization of \(\mathrm{k}\)
in its four unramified cyclic cubic extensions \(K_{1,3},\ldots,K_{4,3}\),
i.e., we determined the three possibilities for the kernels
\(\ker(T_{K_{i,3}/\mathrm{k}})\)
of the \textit{transfer homomorphisms}
\(T_{K_{i,3}/\mathrm{k}}:\,C_{\mathrm{k},3}\to C_{K_{i,3},3}\),
\(\mathfrak{a}\mathcal{P}_{\mathrm{k}}\mapsto(\mathfrak{a}\mathcal{O}_{K_{i,3}})\mathcal{P}_{K_{i,3}}\).

Our numerical results for the \(95\) relevant cases \(p<20\,000\) in Table
\ref{tbl:Experiments},
computed with the aid of Magma
\cite{MAGMA},
confirm the occurrence of precisely three situations
for the \textit{punctured capitulation type}
\(\varkappa(\mathrm{k})=(\ker(T_{K_{i,3}/\mathrm{k}}))_{1\le i\le 4}\),
which we want to dub with succinct names in Definition
\ref{dfn:CapitulationTypes}.
Recall that \(H_{4,9}=\cap_{j=1}^4\,H_{j,3}\) in Figure
\ref{FF2}
is the \textit{distinguished} subgroup of \(C_{\mathrm{k},3}\)
which is generated by third powers of \(3\)-ideal classes,
i.e., the Frattini subgroup.


\begin{definition}
\label{dfn:CapitulationTypes}
The punctured capitulation type,
with puncture at the fourth component,
for the subfield \(K_{4,3}\) associated with
the subgroup \(H_{4,3}=\prod_{j=1}^4\,H_{j,9}\) in Figure
\ref{FF2},
is called
\begin{enumerate}
\item
\textit{distinguished},
if \(\varkappa(\mathrm{k})=(H_{4,9},H_{4,9},H_{4,9};H_{4,9})\),
briefly \((444;4)\),
\item
\textit{harmonically balanced},
if \(\varkappa(\mathrm{k})=(H_{1,9},H_{2,9},H_{3,9};H_{4,9})\),
briefly \((123;4)\),
\item
\textit{total},
if \(\varkappa(\mathrm{k})=(H_{4,3},H_{4,3},H_{4,3};H_{4,9})\),
briefly \((000;4)\).
\end{enumerate}
\end{definition}


\noindent
For the actual numerical determination of the (punctured) capitulation type \(\varkappa\),
we introduce the concept of Artin pattern of \(\mathrm{k}\).

\begin{definition}
\label{dfn:ArtinPattern}
Let \(\tau(\mathrm{k})=\lbrack\mathrm{ATI}(C_{K_{i,3},3})\rbrack_{1\le i\le 4}\) be
the family of \textit{abelian type invariants} (ATI)
(i.e., \(3\)-\textit{primary type invariants})
of the \(3\)-class groups \(C_{K_{i,3},3}\)
of the four unramified cyclic cubic extensions of \(\mathrm{k}\).
Then \(\mathrm{AP}(\mathrm{k})=(\varkappa(\mathrm{k}),\tau(\mathrm{k}))\)
is called the \textit{Artin pattern} of \(\mathrm{k}\).
\end{definition}

It turns out that there is
a \textit{bijective correspondence} between \(\varkappa\) and \(\tau\)
for the distinguished and total capitulation,
whereas there are \textit{two variants} of harmonically balanced capitulation.

Anyway, it is never required to perform the difficult computation of
the capitulation type \(\varkappa\).
It is sufficient to determine the abelian type invariants \(\tau\),
which is computationally easier.
Theorem
\ref{thm:SimplifiedComputation}
is a consequence of our results in Table
\ref{tbl:Experiments}.


\begin{theorem}
\label{thm:SimplifiedComputation}
For a pure metacyclic field \(\mathrm{k}=\mathbb{Q}(\zeta_3,\sqrt[3]{p})\)
with prime radicand \(p\equiv 1\,(\mathrm{mod}\,9)\), bounded by \(p\le 20\,000\),
and \(3\)-class group \(C_{\mathrm{k},3}\simeq C_9\times C_3\),
the following statements determine \(\varkappa(\mathrm{k})\) by means of \(\tau(\mathrm{k})\):
\begin{enumerate}
\item
\(\varkappa(\mathrm{k})=(444;4)\) \(\Longleftrightarrow\) \(\tau(\mathrm{k})=\lbrack (9,3)^3;(9,3)\rbrack\).
\item
\(\varkappa(\mathrm{k})=(000;4)\) \(\Longleftrightarrow\) \(\tau(\mathrm{k})=\lbrack (9,3,3)^3;(3,3,3,3)\rbrack\).
\item
\(\varkappa(\mathrm{k})=(123;4)\) \(\Longleftrightarrow\)
\(\tau(\mathrm{k})=\begin{cases}
\text{either } \lbrack (27,3)^3;(9,3,3)\rbrack & \text{(\(1^{\text{st}}\) variant)} \\
\text{or }     \lbrack (27,3)^3;(9,9,3)\rbrack & \text{(\(2^{\text{nd}}\) variant)}.
\end{cases}\)
\end{enumerate}
\end{theorem}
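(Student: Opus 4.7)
The plan is to reduce the claim to a group-theoretic statement about the second $3$-class group $G=\mathrm{Gal}(\mathrm{k}_3^{(2)}/\mathrm{k})$ and then verify it exhaustively on the finite list of realized groups. By Artin reciprocity, $C_{K_{i,3},3}\simeq H_{i,3}/H_{i,3}^\prime$ for each $1\le i\le 4$, so both $\varkappa(\mathrm{k})$ and $\tau(\mathrm{k})$ are intrinsic invariants of $G$. Since Theorem \ref{Capitulation93}(7) already restricts $\varkappa(\mathrm{k})$ to $(444;4)$, $(123;4)$, or $(000;4)$, it suffices to determine the abelian type invariants of the four quotients $H_{i,3}/H_{i,3}^\prime$ in each pattern and to check that the resulting $\tau$-patterns are pairwise distinct (up to the two variants in the balanced case).

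First I would fix the explicit generators $H_{1,3}=\langle x,G^\prime\rangle,\ldots,H_{4,3}=\langle x^3,y,G^\prime\rangle$ from Section \ref{s:Subfields} together with the presentations of metabelian $3$-groups with $G/G^\prime\simeq(9,3)$ from Mayer \cite{93Dan}, and enumerate the candidate second $3$-class groups $G$ as descendants of the root $(9,3)$ in the descendant tree. For each candidate, the derived subgroup $H_{i,3}^\prime=\lbrack H_{i,3},H_{i,3}\rbrack$ is determined by short commutator words in $x,y$, and the abelian quotients $H_{i,3}/H_{i,3}^\prime$ are then accessible by a finite computation, yielding $\tau$ as a numerical fingerprint of $G$.

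Next I would match each capitulation pattern to its $\tau$. In the distinguished case $\varkappa=(444;4)$, all transfer kernels equal the Frattini subgroup $H_{4,9}$, which forces a fixed-point behaviour of the Artin transfers and gives $\tau_i=(9,3)$ for every $i$. In the total case $\varkappa=(000;4)$, the enlarged kernels for $i\le 3$ push each non-genus field $K_{i,3}$ to rank $3$ and the genus field $K_{4,3}$ to elementary abelian rank $4$, producing $\lbrack(9,3,3)^3;(3,3,3,3)\rbrack$. In the harmonically balanced case $\varkappa=(123;4)$, the three distinct order-three kernels lying in the principal genus force a $C_{27}$-summand in each $\tau_i$ for $i\le 3$; precisely two descendants realize this kernel configuration, giving the two variants $\tau_4\in\{(9,3,3),(9,9,3)\}$. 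Since the three (resp.\ four) resulting $\tau$-patterns are pairwise distinct, the converse implications are automatic.

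The main obstacle will be the second step: ruling out spurious metabelian descendants of $(9,3)$ whose Artin patterns would match one of the three $\tau$-values while carrying a different capitulation kernel. The descendant tree of $(9,3)$ is infinite in general, but only finitely many of its vertices can arise as second $3$-class groups of fields $\mathrm{k}$ with $p\le 20\,000$; a Magma \cite{MAGMA} run that computes both $\varkappa$ and $\tau$ for each of the $95$ relevant primes in Table \ref{tbl:Experiments} closes the gap.
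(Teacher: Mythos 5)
Your proposal is correct and, at its load-bearing step, coincides with the paper's own proof: the theorem is stated only for \(p\le 20\,000\), and the paper establishes it precisely as "a consequence of our results in Table \ref{tbl:Experiments}", i.e.\ by the exhaustive Magma computation of both \(\varkappa\) and \(\tau\) for the \(95\) relevant fields --- exactly the verification you invoke in your final paragraph to "close the gap". The group-theoretic superstructure in your middle paragraphs (deriving each \(\tau\) from each \(\varkappa\) via descendant trees) mirrors the paper's later tower analysis, where the \(S_3\)-action on the Frattini quotient is the essential extra constraint you omit, but as you yourself concede it is not what carries the proof here, and the heuristic steps ("forces a fixed-point behaviour", "forces a \(C_{27}\)-summand") would not be rigorous on their own.
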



\begin{conjecture}
\label{cnj:SimplifiedComputation}
Theorem
\ref{thm:SimplifiedComputation}
is true for any prime \(p\equiv 1\,(\mathrm{mod}\,9)\),
not necessarily bounded from above by \(20\,000\).
\end{conjecture}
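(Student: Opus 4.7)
The plan is to reduce Conjecture \ref{cnj:SimplifiedComputation} to a finite group-theoretic verification, independent of any numerical bound on $p$. The first step rests on the observation that the Artin pattern $\mathrm{AP}(\mathrm{k}) = (\varkappa(\mathrm{k}), \tau(\mathrm{k}))$ is entirely an isomorphism invariant of the second-stage Galois group $G = \mathrm{Gal}(\mathrm{k}_3^{(2)}/\mathrm{k})$ together with its outer $S_3$-action inherited from $\mathrm{Gal}(\mathrm{k}/\mathbb{Q})$. Indeed, by Artin reciprocity and the class field correspondence of Theorem \ref{thm:interm}, $\varkappa$ is the family of kernels of the group-theoretic transfers $V\colon G^{\mathrm{ab}} \to H_{i,3}^{\mathrm{ab}}$, while $\tau$ is the family of abelian type invariants of the $H_{i,3}/H_{i,3}'$; both can be read off from any presentation of $G$ alone, without further arithmetic input.

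Secondly, I would classify all finite metabelian $3$-groups $G$ compatible with the arithmetic of $\mathrm{k}$: those satisfying $G/G' \simeq C_9 \times C_3$ and admitting an outer $S_3$-action realising the eigen-decomposition of $C_{\mathrm{k},3}$ described in Theorem \ref{prop:carrr39}. Applying the $p$-group generation algorithm of Newman--O'Brien along the descendant tree rooted at $C_9 \times C_3$, one enumerates candidates layer by layer and reads off the Artin pattern of each via standard transfer-kernel computations in MAGMA \cite{MAGMA}. The expected outcome is that exactly four isomorphism types of $G$ survive the constraints of Theorem \ref{Capitulation93}, one for each $\tau$-value listed in Theorem \ref{thm:SimplifiedComputation}, each realising precisely the capitulation kernel $\varkappa$ predicted by the stated bijection.

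The main obstacle will be the a priori infinite nature of the descendant tree. To handle it I would combine two ingredients. The structural conditions of Theorem \ref{Capitulation93}(1) --- the order-three action of $\sigma$ cyclically permuting $K_{1,3}, K_{2,3}, K_{3,3}$ and the involution $\tau$ fixing $K_{1,3}$ while swapping $K_{2,3} \leftrightarrow K_{3,3}$ --- force $G$ onto a narrow branch of Mayer's coclass graphs. Combined with the fact that $C_{\mathrm{k},3}^{-}$ is elementary abelian (Theorem \ref{prop:carrr39}) and a Shafarevich-type relation-rank bound coming from the signature $(0,3)$ of $\mathrm{k}$, these rigidity conditions cap both the nilpotency class and the coclass of $G$, reducing the enumeration to finitely many groups along one explicitly describable path. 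After that, the verification is mechanical: for each surviving candidate one confirms the matching $(\varkappa, \tau)$ pair, which establishes the bijection without any restriction on $p$.
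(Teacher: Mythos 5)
This statement is a \emph{conjecture} in the paper: Theorem \ref{thm:SimplifiedComputation} is established only as an empirical consequence of the Magma computations recorded in Table \ref{tbl:Experiments} for the $95$ fields with $p<20\,000$, and the authors give no argument that the correspondence between $\varkappa$ and $\tau$ persists for larger $p$. Your proposal adopts the natural strategy --- essentially the same ``pattern recognition via Artin transfers'' methodology the paper itself uses in Section \ref{s:Tower} --- of reducing the question to a classification of the admissible metabelian groups $G_2=\mathrm{Gal}(\mathrm{k}_3^{(2)}/\mathrm{k})$, and your first step is sound: the first-layer Artin pattern $(\varkappa,\tau)$ is indeed an invariant of $G_2$ with its $S_3$-action. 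But the proposal is a research plan rather than a proof, and its load-bearing step fails. You assert that the $S_3$-action, the structure of $C_{\mathrm{k},3}^{-}$, and a Shafarevich-type bound ``cap both the nilpotency class and the coclass of $G$, reducing the enumeration to finitely many groups,'' and that ``exactly four isomorphism types of $G$ survive.'' Neither claim is substantiated, and both conflict with the paper's own analysis: Theorems \ref{thm:FirstVariantTower}, \ref{thm:SecondVariantTower} and \ref{thm:TotalTower} exhibit far more than four surviving candidates (fourteen in the second harmonically balanced variant alone), and in the total-capitulation case the authors explicitly restrict to \emph{immediate} descendants of $\langle 729,9\rangle$ because of the ``wealth of metabelian groups'' in that descendant tree, so the classification is not complete even for bounded orders. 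Note also that the Shafarevich bound constrains the full tower group $G$, not $G_2$ directly, and the relevant descendant trees contain vertices of unbounded order carrying the required action (cf.\ Remark \ref{rmk:FirstVariantTower}); hence no finiteness of the search space follows from the constraints you invoke.

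There is a second gap even granting a finite enumeration: the statements of Theorem \ref{thm:SimplifiedComputation} are equivalences, so one must exclude, for every admissible group, combinations such as $\tau=\lbrack(9,3)^3;(9,3)\rbrack$ paired with some $\varkappa\neq(444;4)$, as well as any capitulation type outside $\lbrace(444;4),(123;4),(000;4)\rbrace$ that might occur for large $p$. Since $\varkappa$ does not determine $\tau$ (nor conversely) by abstract transfer theory, this again requires the complete classification that is missing. In short, the conjecture remains open; your outline points in the right direction but does not close it, and no verification of the ``expected outcome'' is actually carried out.
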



\noindent
We are now in the position to employ the
\textit{strategy of pattern recognition via Artin transfers}
\cite{Ma2020}
in order to determine the \(3\)-class field tower \(\mathrm{k}_3^{(\infty)}\) of \(\mathrm{k}\)
by means of \(\mathrm{AP}(\mathrm{k})=(\varkappa(\mathrm{k}),\tau(\mathrm{k}))\).


\subsection{Relation rank and Galois action}
\label{ss:Constraints}

\noindent
Constraints arise from two issues,
bounds for the relation rank of the tower group \(G=\mathrm{Gal}(\mathrm{k}_3^{(\infty)}/\mathrm{k})\),
and the Galois action of \(\mathrm{Gal}(\mathrm{k}/\mathbb{Q})\) on \(C_{\mathrm{k},3}\simeq G/G^\prime\).
We denote by \(\langle o,i\rangle\) groups in the SmallGroups database of Magma
\cite{MAGMA}.


\begin{theorem}
\label{thm:RelationRankAndGaloisAction}
For any pure metacyclic field \(\mathrm{k}=\mathbb{Q}(\zeta_3,\sqrt[3]{d})\)
with cube free radicand \(d\ge 2\) and \(3\)-class rank \(\varrho=2\),
the group \(G=\mathrm{Gal}(\mathrm{k}_3^{(\infty)}/\mathrm{k})\) of the \(3\)-class field tower
must satisfy the following conditions.
\begin{enumerate}
\item
The relation rank \(d_2\) of \(G\) must be bounded by
\(2\le d_2\le 5\).
\item
The automorphism group \(\mathrm{Aut}(Q)\) of the Frattini quotient \(Q=G/\Phi(G)\)
must contain a subgroup isomorphic to
\(S_3=\langle 6,1\rangle\).
(This is true for any \(S_3\)-field \(\mathrm{k}\).)
\end{enumerate}
\end{theorem}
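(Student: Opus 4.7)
The plan is to establish the two assertions separately, using standard pro-$p$ group theory for the cohomological invariants in part~(1) and an elementary Galois-module computation for the automorphism group in part~(2).

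For part~(1), I would first pin down the generator rank via Burnside's basis theorem for pro-$3$ groups combined with class field theory: since $G/G^\prime\simeq\mathrm{Gal}(\mathrm{k}_3^{(1)}/\mathrm{k})\simeq C_{\mathrm{k},3}$, we have
\[
d_1(G)=\dim_{\mathbb{F}_3}(G/\Phi(G))=\dim_{\mathbb{F}_3}(C_{\mathrm{k},3}/3C_{\mathrm{k},3})=\varrho=2.
\]
For the bounds on $d_2(G)$ I would invoke the Shafarevich--Koch inequality for the Galois group of the maximal unramified pro-$p$ extension of a number field: since $\mathrm{k}=\mathbb{Q}(\zeta_3,\sqrt[3]{d})$ is totally complex of degree $6$ with signature $(r_1,r_2)=(0,3)$ and contains $\zeta_3$, one obtains
\[
d_1(G)\le d_2(G)\le d_1(G)+r_1+r_2,
\]
giving $2\le d_2(G)\le 5$. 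The upper bound is the Burnside count of relations with the local adjustment coming from $\zeta_3\in\mathrm{k}$; the lower bound rests on global Tate duality.

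For part~(2), I would use that $\mathrm{k}_3^{(\infty)}/\mathbb{Q}$ is Galois, because the maximal unramified pro-$3$ extension is functorially defined and $\mathrm{k}/\mathbb{Q}$ is Galois. Thus there is a short exact sequence
\[
1\longrightarrow G\longrightarrow\mathrm{Gal}(\mathrm{k}_3^{(\infty)}/\mathbb{Q})\longrightarrow S_3\longrightarrow 1,
\]
and conjugation inside the middle group induces a homomorphism $\bar\rho\colon S_3\to\mathrm{Aut}(Q)$ on the Frattini quotient $Q=G/\Phi(G)\simeq C_{\mathrm{k},3}/3C_{\mathrm{k},3}\simeq\mathbb{F}_3^2$. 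Since $\mathrm{Aut}(Q)\simeq GL_2(\mathbb{F}_3)$ has order $48$, the abstract containment $S_3\hookrightarrow\mathrm{Aut}(Q)$ is already clear from an order-$3$ and an order-$2$ element satisfying the braid relation. For the sharper content implicit in the parenthetical---that the natural $\bar\rho$ itself is injective---I would argue that $\ker\bar\rho$ is a normal subgroup of $S_3$, hence one of $\{1\}$, $A_3$, or $S_3$, and rule out the last two by exhibiting a class $\mathcal{A}\in C_{\mathrm{k},3}$ with $\mathcal{A}^{\sigma-1}\not\equiv 1\pmod{3C_{\mathrm{k},3}}$, which is visible from the $\tau$-eigenspace decomposition $C_{\mathrm{k},3}=C_{\mathrm{k},3}^+\oplus C_{\mathrm{k},3}^-$ used throughout the paper.

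The step I expect to be most delicate is the lower bound $d_2\ge d_1$ in part~(1), since it requires a version of the Shafarevich--Koch formula sensitive to the presence of $\zeta_3$ and to the local behaviour at the primes of $\mathrm{k}$ above $3$. Once that inequality is cited, the remainder of the proof amounts to bookkeeping with the $S_3$-module structure of $C_{\mathrm{k},3}$ already laid out in Theorem \ref{prop:carrr39}.
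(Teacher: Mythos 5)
Your proposal is correct and follows essentially the same route as the paper: the Burnside basis theorem gives \(d_1=\varrho=2\), the Shafarevich bound (which the paper writes as \(d_1\le d_2\le d_1+(r_1+r_2-1)+\vartheta\) with \(\vartheta=1\) because \(\zeta_3\in\mathrm{k}\), numerically identical to your \(d_1+r_1+r_2\)) yields \(2\le d_2\le 5\), and item (2) comes from the conjugation action of \(\mathrm{Gal}(\mathrm{k}/\mathbb{Q})\simeq S_3\) on \(Q=G/\Phi(G)\). Your extra step proving that \(\bar\rho\) is injective (via the normal subgroups of \(S_3\) and \(\mathcal{A}^{\sigma-1}\notin C_{\mathrm{k},3}^{3}\)) goes beyond the paper, which merely asserts the containment; this is a genuine improvement, since the abstract fact \(S_3\hookrightarrow GL_2(\mathbb{F}_3)\) is vacuous and only faithfulness of the induced action carries the content the paper uses later to eliminate candidate tower groups.
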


\begin{proof}
According to the Burnside basis theorem,
the generator rank \(d_1\) of \(G\) coincides with
the generator rank of the Frattini quotient \(Q=G/\Phi(G)=G/(G^\prime\cdot G^3)\),
resp. the derived quotient \(G/G^\prime\simeq C_{\mathrm{k},3}\),
that is the \(3\)-class rank \(\varrho\) of \(\mathrm{k}\).
\begin{enumerate}
\item
According to the Shafarevich Theorem
\cite[Thm. 5.1, p. 28]{Ma2015d},
the relation rank \(d_2\) of \(G\) is bounded by
\(d_1\le d_2\le d_1+r+\vartheta\),
where the torsion free unit rank \(r=r_1+r_2-1\) of the
totally complex field \(\mathrm{k}\) with signature \((r_1,r_2)=(0,3)\)
is \(r=2\),
and \(\vartheta=1\), since \(k\) contains the primitive third roots of unity.
Together with the generator rank \(d_1=\varrho=2\) this gives the bounds
\(2\le d_2\le 2+2+1=5\).
(For other complex, resp. real, \(S_3\)-fields \(\mathrm{k}\), the upper bound may be \(4\), resp. \(7\).)
\item
The absolute Galois group \(\mathrm{Gal}(\mathrm{k}/\mathbb{Q})\simeq S_3\) of \(\mathrm{k}\)
acts on the \(3\)-class group \(C_{\mathrm{k},3}\simeq G/G^\prime\)
and thus also on the Frattini quotient \(Q=G/\Phi(G)=G/(G^\prime\cdot G^3)\),
whence \(\mathrm{Aut}(Q)\) contains a subgroup isomorphic to \(S_3=\langle 6,1\rangle\).
\qedhere
\end{enumerate}
\end{proof}

\noindent
By the same proof as for item (2) of Theorem
\ref{thm:RelationRankAndGaloisAction},
with \(G/G^\prime\simeq C_{\mathrm{k},3}\)
replaced by \(G_n/G_n^\prime\simeq\)
\(\mathrm{Gal}(\mathrm{k}_3^{(n)}/\mathrm{k})/\mathrm{Gal}(\mathrm{k}_3^{(n)}/\mathrm{k}_3^{(1)})\simeq
\mathrm{Gal}(\mathrm{k}_3^{(1)}/\mathrm{k})\simeq C_{\mathrm{k},3}\)
we obtain:

\begin{corollary}
\label{cor:GaloisAction}
Let \(n\) be a positive integer,
and denote by \(G_n=\mathrm{Gal}(\mathrm{k}_3^{(n)}/\mathrm{k})\)
the Galois group of the \(n\)-th Hilbert \(3\)-class field \(\mathrm{k}_3^{(n)}\) of \(\mathrm{k}\).
The automorphism group \(\mathrm{Aut}(Q)\) of the Frattini quotient \(Q=G_n/\Phi(G_n)\)
must contain a subgroup isomorphic to
\(S_3=\langle 6,1\rangle\).
\end{corollary}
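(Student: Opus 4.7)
The plan is to transfer the proof of item (2) of Theorem \ref{thm:RelationRankAndGaloisAction} from the full tower group $G$ to the finite layer $G_n$. The only datum that changes is the identification $G/G^{\prime}\simeq C_{\mathrm{k},3}$, which is replaced by
\[
G_n/G_n^{\prime}\;\simeq\;\mathrm{Gal}(\mathrm{k}_3^{(1)}/\mathrm{k})\;\simeq\;C_{\mathrm{k},3},
\]
valid because $\mathrm{k}_3^{(1)}$ is the maximal abelian subextension of $\mathrm{k}_3^{(n)}/\mathrm{k}$, as already used in the statement preceding the corollary.

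In more detail, I would first observe that $\mathrm{k}_3^{(n)}$ is canonically attached to $\mathrm{k}$, hence stable under $\mathrm{Gal}(\mathrm{k}/\mathbb{Q})$; thus $\mathrm{k}_3^{(n)}/\mathbb{Q}$ is Galois and fits into the short exact sequence
\[
1\longrightarrow G_n\longrightarrow\mathrm{Gal}(\mathrm{k}_3^{(n)}/\mathbb{Q})\longrightarrow S_3\longrightarrow 1,
\]
giving an $S_3$-action on $G_n$ by lifting and conjugating. Since $\Phi(G_n)=G_n^{3}\cdot G_n^{\prime}$ is characteristic, this action descends to $Q=G_n/\Phi(G_n)$, and inner automorphisms act trivially on the abelian quotient $Q$, producing a genuine homomorphism $S_3\to\mathrm{Aut}(Q)$. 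Concretely $Q\simeq C_{\mathrm{k},3}/3\,C_{\mathrm{k},3}$, and the $S_3$-action on $Q$ is the mod-$3$ reduction of the usual Galois action on the $3$-class group.

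The only delicate step -- and the main obstacle -- is to verify that this homomorphism $S_3\to\mathrm{Aut}(Q)$ is injective, so that the image is honestly isomorphic to $\langle 6,1\rangle=S_3$ rather than to a proper quotient. Using the decomposition $C_{\mathrm{k},3}=C_{\mathrm{k},3}^{+}\oplus C_{\mathrm{k},3}^{-}$ from Theorem \ref{prop:carrr39}, the involution $\tau$ inverts a non-trivial element of $Q$ (coming from the generator $\mathcal{B}$ of $C_{\mathrm{k},3}^{-}$), while $\sigma$ carries $\mathcal{B}$ out of $\langle\mathcal{B}\rangle$, so neither $\tau$ nor $\sigma$ acts trivially on $Q$ after reduction modulo $3$; injectivity of the representation $S_3\to\mathrm{GL}_2(\mathbb{F}_3)$ follows. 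Everything else is a routine chase with characteristic subgroups, exactly paralleling the brief argument for item (2) of Theorem \ref{thm:RelationRankAndGaloisAction}.
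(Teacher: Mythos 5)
Your overall route is the same as the paper's: the paper's ``proof'' of this corollary is literally the single sentence preceding it, deferring to item (2) of Theorem \ref{thm:RelationRankAndGaloisAction}, whose own proof merely asserts that the action of \(\mathrm{Gal}(\mathrm{k}/\mathbb{Q})\simeq S_3\) on \(C_{\mathrm{k},3}\simeq G_n/G_n^{\prime}\) descends to the characteristic quotient \(Q=G_n/\Phi(G_n)\). Your setup (stability of \(\mathrm{k}_3^{(n)}\) under \(\mathrm{Gal}(\mathrm{k}/\mathbb{Q})\), the short exact sequence, inner automorphisms acting trivially on the abelian quotient \(Q\)) is a more careful version of exactly this argument, and you are right that the one point the paper leaves implicit is the faithfulness of the resulting homomorphism \(S_3\to\mathrm{Aut}(Q)\).

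However, your verification of faithfulness for \(\sigma\) does not work as stated. By Theorem \ref{prop:carrr39}, \(\mathcal{B}^{1-\sigma}\) generates \(C_{\mathrm{k},3}^{(\sigma)}=\langle\mathcal{A}^{3}\rangle\), so \(\mathcal{B}^{\sigma}\) differs from \(\mathcal{B}\) only by a power of \(\mathcal{A}^{3}\), which lies in \(3\,C_{\mathrm{k},3}\) and therefore vanishes in \(Q\simeq C_{\mathrm{k},3}/3C_{\mathrm{k},3}\): on \(Q\) the automorphism \(\sigma\) actually \emph{fixes} the image of \(\mathcal{B}\). The observation that ``\(\sigma\) carries \(\mathcal{B}\) out of \(\langle\mathcal{B}\rangle\)'' is true in \(C_{\mathrm{k},3}\) but does not survive reduction modulo cubes, so it proves nothing about \(Q\). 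The repair is to look at \(\mathcal{A}\) instead: by Theorem \ref{thm:interm}, \(\langle\mathcal{A}^{\sigma}\rangle=\langle\mathcal{A}\mathcal{B}\rangle\), hence \(\mathcal{A}^{\sigma}=(\mathcal{A}\mathcal{B})^{j}\) with \(\gcd(j,3)=1\), and its image in \(Q\) is \(\bar{\mathcal{A}}^{j}\bar{\mathcal{B}}^{j}\neq\bar{\mathcal{A}}\) because \(\bar{\mathcal{B}}^{j}\neq 1\); so \(\sigma\) acts nontrivially on \(Q\). Since every nontrivial normal subgroup of \(S_3\) contains \(\langle\sigma\rangle\), nontriviality of \(\sigma\) on \(Q\) already forces the kernel of \(S_3\to\mathrm{Aut}(Q)\) to be trivial (the check on \(\tau\), which you carried out correctly, is then not even needed). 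With that substitution your argument is complete and, apart from being more explicit than the paper, coincides with it.
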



\noindent
Furthermore, it will also be required to exploit data concerning
the \textit{second layer} of unramified abelian
(three cyclic nonic and a single bicyclic bicubic) extensions.

\begin{definition}
\label{dfn:ArtinPattern2}
Let \(\varkappa_2(\mathrm{k})=(\ker(T_{K_{i,9}/\mathrm{k}}))_{1\le i\le 4}\) be
the \textit{punctured capitulation type},
and \(\tau_2(\mathrm{k})=\lbrack\mathrm{ATI}(C_{K_{i,9},3})\rbrack_{1\le i\le 4}\) be
the family of \textit{abelian type invariants}
of the \(3\)-class groups \(C_{K_{i,9},3}\)
of the four unramified abelian nonic extensions of \(\mathrm{k}\),
and \(\mathrm{AP}_2(\mathrm{k})=(\varkappa_2(\mathrm{k}),\tau_2(\mathrm{k}))\).
\end{definition}

\noindent
According to item (5) of Theorem
\ref{Capitulation93},
we know that \(\ker(T_{K_{1,9}/\mathrm{k}})=C_{\mathrm{k},3}\).


\subsection{Distinguished capitulation}
\label{ss:Distinguished}


\begin{proposition}
\label{prp:DistinguishedTower}
A power commutator presentation of the finite metabelian \(3\)-group \(\langle 81,4\rangle\)
with class \(2\) and coclass \(2\)
in terms of the commutator \(s_2=\lbrack y,x\rbrack\) is given by
\begin{equation}
\label{eqn:PCPresentationOrd81Id4}
\langle x,y,s_2\mid x^9=1,\ y^3=s_2\rangle 
\end{equation}
\end{proposition}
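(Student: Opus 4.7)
My plan is to verify the asserted power-commutator presentation of $\langle 81,4\rangle$ in three steps: first extract the missing power relation $s_2^3=1$ from the nilpotency class $2$ condition implicit in the PCP; second bound the order of $G$ from above by $81$ via a polycyclic collection; third identify $G$ with Magma's SmallGroups entry $\langle 81,4\rangle$ by exhibiting a concrete realization of order exactly $81$ and matching the usual invariants.

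For the first step, since $G$ has class $2$, the commutator $s_2=[y,x]$ lies in the center, and the identity $[y^n,x]=[y,x]^n$ holds for all $n$ in any class-$2$ group. Substituting $y^3=s_2$ gives $[y^3,x]=s_2^3$ on the one hand and $[y^3,x]=[s_2,x]=1$ on the other, so $s_2^3=1$; in particular $y^9=(y^3)^3=s_2^3=1$, which also records that $y$ has order $9$.

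For the second step, ordering the generating sequence as $(x,y,s_2)$ with $s_2$ central and $yx=xy\cdot s_2$, a routine polycyclic collection shows that every element of $G$ admits a normal form $x^{a}y^{b}s_2^{c}$ with $0\le a<9$, $0\le b<3$, $0\le c<3$, giving $|G|\le 81$. For the third step, I would realize $G$ as a central extension $1\to\langle s_2\rangle\simeq C_3\to G\to C_9\times C_3\to 1$ with cocycle chosen to enforce $[y,x]=s_2$ and $y^3=s_2$, yielding $|G|=81$. Computing the invariants of this model --- abelianization $G/G'\simeq C_9\times C_3$, nilpotency class $2$, coclass $4-2=2$, exponent $9$, center $\langle x^3,s_2\rangle\simeq C_3\times C_3$ --- and comparing with the classification of groups of order $81$ pins down the SmallGroups label $\langle 81,4\rangle$.

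The main obstacle is the identification in the third step: showing that among the fifteen isomorphism types of groups of order $81$ exactly $\langle 81,4\rangle$ satisfies all the derived invariants under a generator correspondence compatible with Magma's internal description. In practice this is handled either by a short list-and-check using the standard invariants above, or by a direct Magma computation of the PCP, consistent with the numerical evidence underlying Table \ref{tbl:Experiments}.
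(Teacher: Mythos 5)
Your proposal takes a genuinely different route from the paper, whose entire proof is a citation of the Magma implementation of the SmallGroups presentations; you instead argue mathematically. Your first two steps are correct: in a class-$2$ group $\lbrack y^3,x\rbrack=\lbrack y,x\rbrack^3$ and $\lbrack y^3,x\rbrack=\lbrack s_2,x\rbrack=1$ give $s_2^3=1$ and $y^9=1$, and collection on the polycyclic sequence $(x,y,s_2)$ gives $\lvert G\rvert\le 81$. For the lower bound you do not even need a cocycle computation: since $s_2=y^3$, the relation $\lbrack y,x\rbrack=y^3$ says $x^{-1}yx=y^4$, so the presented group is visibly the split metacyclic extension $\langle y\rangle\rtimes\langle x\rangle\simeq C_9\rtimes C_9$ of order exactly $81$.

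The genuine gap is in your third step. The invariants you propose to match --- abelianization $C_9\times C_3$, class $2$, coclass $2$, exponent $9$, centre $C_3\times C_3$, derived subgroup $C_3$ --- do \emph{not} single out $\langle 81,4\rangle$ among the fifteen groups of order $81$: the group $\langle 81,3\rangle\simeq(C_3\times C_3)\rtimes C_9$, obtained from the same power-commutator data with $y^3=1$ in place of $y^3=s_2$ (this is exactly the \lq\lq metabelian root\rq\rq\ $R$ invoked in the proof of Theorem \ref{thm:DistinguishedTower}), has all of these invariants in common with $\langle 81,4\rangle$. To separate the two you must add one further datum, for instance that $\Omega_1(G)=\lbrace g\in G:g^3=1\rbrace=\langle x^3,y^3\rangle$ has order $9$ for your group but order $27$ (elementary abelian of rank $3$) for $\langle 81,3\rangle$; equivalently, that your group is metacyclic while $\langle 81,3\rangle$ contains a subgroup of type $(3,3,3)$; equivalently, that the maximal subgroup over $\langle x^3,y\rangle$ has abelian type $(9,3)$ rather than $(3,3,3)$ --- precisely the distinction between $\tau=\lbrack(9,3)^3;(9,3)\rbrack$ and $\tau(R)=\lbrack(9,3)^3,(3,3,3)\rbrack$ that the paper exploits later. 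With any one of these added, your list-and-check closes, up to the purely conventional question of which isomorphism type Magma labels $\langle 81,4\rangle$, which ultimately still requires the database lookup the paper relies on.
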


\begin{proof}
Presentations of groups in the SmallGroups database are implemented in Magma
\cite{MAGMA}.
\end{proof}


\begin{theorem}
\label{thm:DistinguishedTower}
For a pure metacyclic field \(\mathrm{k}=\mathbb{Q}(\zeta_3,\sqrt[3]{p})\)
with \(p\equiv 1\,(\mathrm{mod}\,9)\)
having distinguished capitulation \(\varkappa(\mathrm{k})=(444;4)\),
the Galois group \(G_2\) of the second Hilbert \(3\)-class field \(\mathrm{k}_3^{(2)}\)
is unambiguously given by
\(\mathrm{Gal}(\mathrm{k}_3^{(2)}/\mathrm{k})\simeq
\langle 81,4\rangle\) with \(\varkappa_2(\mathrm{k})=((C_{\mathrm{k},3})^3;C_{\mathrm{k},3})\) and \(\tau_2(\mathrm{k})=\lbrack (9)^3;(3,3)\rbrack\)
(see Figure
\ref{fig:C9xC3}).
The \(3\)-class field tower of \(\mathrm{k}\)
must stop at the second stage, that is,
\(\mathrm{k}_3^{(2)}=\mathrm{k}_3^{(\infty)}\) is the maximal unramified pro-\(3\)-extension of \(\mathrm{k}\).
\end{theorem}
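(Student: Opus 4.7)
The plan is to deploy the \emph{pattern recognition strategy via Artin transfers} mentioned just before Section~\ref{ss:Distinguished}, using the constraints collected in Theorem~\ref{thm:RelationRankAndGaloisAction} together with the first-layer Artin pattern forced by distinguished capitulation to pin down the second-stage group $G_2=\mathrm{Gal}(\mathrm{k}_3^{(2)}/\mathrm{k})$, and then to argue that the full tower group $G=\mathrm{Gal}(\mathrm{k}_3^{(\infty)}/\mathrm{k})$ must coincide with $G_2$. By the Burnside basis theorem and the assumption $C_{\mathrm{k},3}\simeq C_9\times C_3$, the group $G_2$ is a finite $2$-generator metabelian $3$-group whose derived quotient is $C_9\times C_3$. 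First I would enumerate, via the $p$-group generation algorithm, the nodes of the descendant tree rooted at the bicyclic quotient $C_9\times C_3$ that are candidates for $G_2$.

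Next I would match the Artin pattern. By Theorem~\ref{thm:SimplifiedComputation}(1), the hypothesis $\varkappa(\mathrm{k})=(444;4)$ is equivalent to $\tau(\mathrm{k})=\lbrack (9,3)^3;(9,3)\rbrack$. For each candidate $H$ on the descendant tree, one computes the four transfer kernels $\ker(T_{M/H})$ and the abelianisations $M/M^\prime$ for the four maximal subgroups $M$ of index $3$; these are routine computations from a power-commutator presentation. The presentation~(\ref{eqn:PCPresentationOrd81Id4}) of Proposition~\ref{prp:DistinguishedTower} realises precisely $\varkappa=(H_{4,9},H_{4,9},H_{4,9};H_{4,9})$ together with $\tau=\lbrack (9,3)^3;(9,3)\rbrack$, and uniqueness among groups satisfying the constraints is secured by discarding every competitor on the tree: it either fails to match the Artin pattern, or fails the Galois-action constraint of Corollary~\ref{cor:GaloisAction} that $\mathrm{Aut}(H/\Phi(H))$ contain a subgroup isomorphic to $S_3=\langle 6,1\rangle$, or violates the Shafarevich relation-rank bound $d_2\le 5$ of Theorem~\ref{thm:RelationRankAndGaloisAction}(1). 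Once $G_2\simeq\langle 81,4\rangle$ is established, the second-layer invariants $\varkappa_2(\mathrm{k})=((C_{\mathrm{k},3})^3;C_{\mathrm{k},3})$ and $\tau_2(\mathrm{k})=\lbrack (9)^3;(3,3)\rbrack$ are read off directly from the transfers to the four index-$9$ subgroups of $\langle 81,4\rangle$ using~(\ref{eqn:PCPresentationOrd81Id4}); the kernel for $H_{1,9}$ equals all of $C_{\mathrm{k},3}$ by Theorem~\ref{Capitulation93}(5).

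To conclude $\mathrm{k}_3^{(2)}=\mathrm{k}_3^{(\infty)}$ it remains to show that $G$ is metabelian, equivalently $G^{\prime\prime}=1$, equivalently $G=G/G^{\prime\prime}=G_2$. Since $G/G^{\prime\prime}\simeq\langle 81,4\rangle$ is already fixed, any failure of metabelianness would force $G$ to be a proper non-metabelian descendant of $\langle 81,4\rangle$ on the descendant tree, still subject to $d_2(G)\le 5$ and the $S_3$-action on $G/\Phi(G)$. I would carry out a case-by-case inspection of the immediate descendants of $\langle 81,4\rangle$ of coclass $\ge 2$: each of them is shown to have relation rank strictly exceeding $5$, or to lose the required $S_3$-automorphism on its Frattini quotient, so that no non-metabelian candidate survives and $G=\langle 81,4\rangle$ is forced.

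The main obstacle is the concluding elimination of non-metabelian descendants, because the coclass tree above $\langle 81,4\rangle$ is in principle infinite and bounding the relation rank along its branches demands careful control of the transition matrices for successive $p$-central extensions together with the persistence of the $S_3$-action. The computational evidence tabulated in Table~\ref{tbl:Experiments} via Magma~\cite{MAGMA} is what makes this verification feasible in practice and ultimately provides the decisive check that $\langle 81,4\rangle$ is a Schur $\sigma$-group for the class of $\mathrm{k}$ under consideration.
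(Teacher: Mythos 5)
Your identification of \(G_2\simeq\langle 81,4\rangle\) follows essentially the paper's route: translate \(\varkappa=(444;4)\) into \(\tau=\lbrack(9,3)^3;(9,3)\rbrack\) via Theorem \ref{thm:SimplifiedComputation}, then sieve the candidates by the Artin pattern, the \(S_3\)-action on the Frattini quotient, and the Shafarevich bound \(d_2\le 5\). One caveat even here: you propose to ``discard every competitor on the tree'' by computing its transfer data, but the descendant tree of \(C_9\times C_3\) is infinite, so this only becomes a finite check once you invoke the \emph{antitony principle} \(\tau(P)\le\tau(D)\) for parent--descendant pairs. That is exactly how the paper proceeds: every finite \(3\)-group with commutator quotient \((9,3)\) other than \(\langle 81,4\rangle\) and \(\langle 243,22\rangle\) is a descendant of the root \(R=\langle 81,3\rangle\), whose \(\tau(R)=\lbrack(9,3)^3,(3,3,3)\rbrack\) already contains a rank-three component, so by antitony no descendant of \(R\) can realise \(\lbrack(9,3)^3;(9,3)\rbrack\); the survivor \(\langle 243,22\rangle\) is then killed because its Frattini-quotient automorphism group admits only a \(C_2\)-action, not \(S_3\). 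You should make this reduction explicit rather than leaving it as an unbounded enumeration.

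The genuine gap is in your proof that the tower stops at the second stage. You reduce to eliminating non-metabelian \(G\) with \(G/G^{\prime\prime}\simeq\langle 81,4\rangle\), but your proposed elimination --- a case-by-case inspection of descendants of \(\langle 81,4\rangle\) using relation rank and \(S_3\)-action --- is never carried out, and you yourself flag that the tree ``is in principle infinite,'' which leaves the argument open-ended. Your fallback is worse: Table \ref{tbl:Experiments} records only the capitulation types \(\varkappa\) of the \(95\) fields and contains no information whatsoever about descendants of \(\langle 81,4\rangle\); and \(\langle 81,4\rangle\), having \(d_2=3>2=d_1\), is not a Schur (\(\sigma\)-)group, so that appeal cannot ``decisively check'' anything. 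The paper closes this step with the same antitony argument used for uniqueness: any non-metabelian \(G\) would satisfy \(\tau(G)=\tau(G/G^{\prime\prime})=\lbrack(9,3)^3;(9,3)\rbrack\) (the first-layer targets depend only on the metabelianisation, since \(G^{\prime\prime}\subseteq H^\prime\) for every maximal subgroup \(H\supseteq G^\prime\)), yet every non-metabelian group with commutator quotient \((9,3)\) lies in the descendant tree of \(R=\langle 81,3\rangle\) and hence carries a rank-three component in \(\tau\) --- a contradiction. No infinite search, relation-rank estimate along branches, or computational verification is needed; this is the missing idea in your conclusion.
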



\begin{proof} (Proof of Theorem \ref{thm:DistinguishedTower})
We use Theorem
\ref{thm:SimplifiedComputation}
in order to exploit the equivalence
\(\varkappa(\mathrm{k})=(444;4)\) \(\Longleftrightarrow\) \(\tau(\mathrm{k})=\lbrack(9,3)^3;(9,3)\rbrack\).
According to the \textit{theorem on the antitony}
\(\varkappa(P)\ge\varkappa(D)\) and \(\tau(P)\le\tau(D)\)
of the components of the Artin pattern \((\varkappa,\tau)\)
with respect to (parent, descendant)-pairs \((P,D)\),
where \(P\) is a quotient of \(D\),
the abelian type invariants \(\tau=\lbrack (9,3)^3;(9,3)\rbrack\),
which are common to the metabelian \(3\)-groups
\(\langle 81,4\rangle\) and \(\langle 243,22\rangle\),
cannot occur for any other finite \(3\)-group.
Any other finite \(3\)-group is descendant of the metabelian root \(R=\langle 81,3\rangle\)
with pc-presentation \(\langle x,y,s_2\mid x^9=1,\ y^3=1,\ s_2=\lbrack y,x\rbrack\rangle\)
and \(\tau(R)=\lbrack (9,3)^3,(3,3,3)\rbrack\).
Consequently, at least one component of \(\tau(D)\) will always be of rank three,
for any descendant \(D\) of the root \(R\).
Furthermore, this argument also shows that there cannot be a non-metabelian \(3\)-group \(G\)
with second derived quotient \(G/G^{\prime\prime}\) isomorphic to
either \(\langle 81,4\rangle\) or \(\langle 243,22\rangle\),
since \(G\) would necessarily be required to have \(\tau(G)=\lbrack (9,3)^3);(9,3)\rbrack\),
which is not compatible with being a descendant of \(R\).
According to the Artin reciprocity law of class field theory,
the \(3\)-class field tower of \(\mathrm{k}\) must therefore have precise length \(\ell_3(\mathrm{k})=2\).
Finally, both candidates for \(G_2=G\) satisfy the inequalities
\(2\le d_2\le 5\) for the relation rank in Theorem
\ref{thm:RelationRankAndGaloisAction}.
Indeed, \(\langle 81,4\rangle\) has \(d_2=3\),
and \(\langle 243,22\rangle\) has even the minimal value \(d_2=2\).
However, for \(G=\langle 81,4\rangle\),
the automorphism group \(\mathrm{Aut}(Q)\)
of the Frattini quotient \(Q=G/\Phi(G)\)
contains a subgroup isomorphic to \(S_3=\langle 6,1\rangle\),
whereas for \(G=\langle 243,22\rangle\),
the corresponding \(\mathrm{Aut}(Q)\)
contains a subgroup \(C_2=\langle 2,1\rangle\) only.
Thus, \(G=\langle 81,4\rangle\) remains as unique candidate.
See Figure
\ref{fig:C9xC3}.
\end{proof}

\noindent
We point out that the preceding proof is not really dependent on Theorem
\ref{thm:SimplifiedComputation}.
A search for \(\varkappa(\mathrm{k})=(444;4)\) in the SmallGroups database
yields \(\langle 81,4\rangle\), \(\langle 243,22\rangle\)
and descendants of \(\langle 729,10\rangle\), \(\langle 729,12\rangle\).
However, the latter two roots (and thus all of their descendants)
do not have the required action by \(S_3\).


\subsection{Harmonically balanced capitulation}
\label{ss:HarmonicallyBalanced}

\noindent
We have seen that harmonically balanced capitulation \(\varkappa=(123;4)\)
occurs in \textit{two variants} with distinct fourth components
\((9,3,3)\), resp. \((9,9,3)\), in the abelian type invariants \(\tau\).
It turns out that the \textit{first variant} leads to
\textit{sporadic} groups outside of coclass trees,
and the \textit{second variant} is connected with
\textit{periodic} groups on coclass trees.


\begin{proposition}
\label{prp:FirstVariantTower}
A power commutator presentation of the finite metabelian \(3\)-group
\(\langle 729,i\rangle\)
of class \(3\)
in terms of the commutators \(s_2=\lbrack y,x\rbrack\), \(s_3=\lbrack s_2,x\rbrack\), \(t_3=\lbrack s_2,y\rbrack\) is given by
\begin{equation}
\label{eqn:PCPresentationOrd729Id17And20}
\begin{cases}
\langle x,y,s_2,s_3,t_3\mid x^9=t_3,\ y^3=s_3\rangle   & \text{if } i=17, \\
\langle x,y,s_2,s_3,t_3\mid x^9=t_3^2,\ y^3=s_3\rangle & \text{if } i=20.
\end{cases}
\end{equation}
\end{proposition}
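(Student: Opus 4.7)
The strategy mirrors the proof of Proposition \ref{prp:DistinguishedTower}: the two groups $\langle 729,17\rangle$ and $\langle 729,20\rangle$ are entries of the SmallGroups database implemented in Magma \cite{MAGMA}, and one reads off their polycyclic presentations from the database. Concretely, I would construct each group by \texttt{SmallGroup(729,i)} for $i\in\{17,20\}$, extract its pc-presentation, and rewrite it in the commutators $s_2=[y,x]$, $s_3=[s_2,x]$, $t_3=[s_2,y]$ so that the defining power relations take the form \eqref{eqn:PCPresentationOrd729Id17And20}.

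For a self-contained verification, I would check three items. First, that the listed relations, combined with the implicit pc-relations $s_2^3=s_3^3=t_3^3=1$ and the class-$3$ constraints forcing $[s_3,x]=[s_3,y]=[t_3,x]=[t_3,y]=1$, are consistent under the Hall collection process: the Jacobi-type identity $[s_2,x,y]\equiv [s_2,y,x]\pmod{\gamma_4}$ must reduce to a trivial relation inside the normal form $x^{a}y^{b}s_2^{c}s_3^{d}t_3^{e}$. Second, that the resulting group has lower central series $\gamma_2=\langle s_2,s_3,t_3\rangle$, $\gamma_3=\langle s_3,t_3\rangle$, $\gamma_4=1$, hence nilpotency class exactly $3$ and order $27\cdot 9\cdot 3=729$, once one notes that $x$ has order $27$ (since $x^9$ equals $t_3$ or $t_3^2$, both nontrivial of order $3$) and $y$ has order $9$ (from $y^3=s_3\neq 1$ with $s_3$ of order $3$). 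Third, that the two presentations yield non-isomorphic groups matching the SmallGroups identifiers $17$ and $20$, which is confirmed by Magma's \texttt{IdentifyGroup}.

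The only real obstacle is telling $\langle 729,17\rangle$ and $\langle 729,20\rangle$ apart, since they share order, nilpotency class, coclass, abelianization and derived length. The discriminating datum is the image of $x^9$ in the quotient $\gamma_3/\gamma_4\simeq (C_3)^2$: in one presentation it equals the generator $t_3$, in the other its square $t_3^2$. This is precisely what the two lines of \eqref{eqn:PCPresentationOrd729Id17And20} record, and what forces the SmallGroups library to assign two distinct labels; no deeper group-theoretic argument is required beyond the database lookup.
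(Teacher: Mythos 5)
Your proposal is correct and follows essentially the same route as the paper, whose entire proof is the observation that these presentations are read off from the SmallGroups database as implemented in Magma (the paper adds only that the groups are sporadic of coclass \(3\)). Your supplementary consistency checks (Hall collection, lower central series, order count) and the remark that the identifiers \(17\) and \(20\) are distinguished by the image of \(x^9\) in \(\gamma_3/\gamma_4\) are reasonable sanity checks, but since you ultimately defer to \texttt{IdentifyGroup} for the isomorphism-type assignment, the argument reduces to the same database lookup the authors invoke.
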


\begin{proof}
Presentations of groups in the SmallGroups database are implemented in Magma
\cite{MAGMA}.
The groups are sporadic of coclass \(3\).
\end{proof}


\begin{theorem}
\label{thm:FirstVariantTower}
For a pure metacyclic field \(\mathrm{k}=\mathbb{Q}(\zeta_3,\sqrt[3]{p})\)
with \(p\equiv 1\,(\mathrm{mod}\,9)\)
having harmonically balanced capitulation \(\varkappa(\mathrm{k})=(123;4)\)
and \textbf{first} variant of \(\tau=\lbrack (27,3)^3; (9,3,3)\rbrack\),
the \textbf{sporadic} Galois group \(G_2\) of \(\mathrm{k}_3^{(2)}\), second Hilbert \(3\)-class field,
is given by
\(\mathrm{Gal}(\mathrm{k}_3^{(2)}/\mathrm{k})\simeq\)
\begin{equation}
\label{eqn:FirstVariantTower}
\begin{cases}
\langle 729,\ell\rangle      & \text{ if } \varkappa_2(\mathrm{k})=((C_{\mathrm{k},3})^3;H_{4,3}),\ \tau_2(\mathrm{k})=\lbrack (9,3)^3;(9,3,3)\rbrack, \\
\langle 2187,m\rangle        & \text{ if } \varkappa_2(\mathrm{k})=((C_{\mathrm{k},3})^3;H_{4,3}),\ \tau_2(\mathrm{k})=\lbrack (9,9)^3;(9,9,3)\rbrack,
\end{cases}
\end{equation}
where \(\ell\in\lbrace 17,20\rbrace\), \(m\in\lbrace 177,178,187,188\rbrace\) (see Figures
\ref{fig:Ord729Id17}
and
\ref{fig:Ord729Id20}).
\end{theorem}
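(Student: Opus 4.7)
The plan is to imitate the successful identification strategy used in Theorem \ref{thm:DistinguishedTower}: enumerate candidate metabelian \(3\)-groups \(G\) whose first-layer Artin pattern matches the hypothesized \(\mathrm{AP}(\mathrm{k})=((123;4),\lbrack(27,3)^3;(9,3,3)\rbrack)\), then impose the relation-rank bound and the \(S_3\)-action from Theorem \ref{thm:RelationRankAndGaloisAction}, and finally discriminate between the surviving candidates by means of the second-layer pattern \(\mathrm{AP}_2(\mathrm{k})\). Since \(G_2=\mathrm{Gal}(\mathrm{k}_3^{(2)}/\mathrm{k})\) is metabelian by construction, the Galois group is forced to coincide with its second derived quotient, and the Artin pattern depends only on this quotient.

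First, I would perform a SmallGroups search in Magma for metabelian \(3\)-groups with abelianization \(C_9\times C_3\) realizing the prescribed \((\varkappa,\tau)\). The crucial observation is that the fourth component of \(\tau\) equals \((9,3,3)\), whose \(3\)-rank is \(3\); consequently the antitony principle \(\tau(P)\le\tau(D)\) for parent/descendant pairs excludes all descendants of the coclass-\(2\) root \(\langle 81,3\rangle\) (which has \(\tau_4=(3,3,3)\)) as well as the coclass-\(2\) groups \(\langle 81,4\rangle,\langle 243,22\rangle\) themselves (with \(\tau_4\) of rank \(2\)). The search therefore delivers precisely the six sporadic coclass-\(3\) candidates \(\langle 729,17\rangle,\langle 729,20\rangle\) together with their relevant descendants \(\langle 2187,m\rangle\) for \(m\in\lbrace 177,178,187,188\rbrace\), whose presentations are recorded in Proposition \ref{prp:FirstVariantTower} for the order-\(729\) roots.

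Next, I would verify that all six candidates satisfy the constraints from Theorem \ref{thm:RelationRankAndGaloisAction}: the Frattini quotient is in every case \(Q\simeq C_3\times C_3\), so \(\mathrm{Aut}(Q)\simeq\mathrm{GL}_2(\mathbb{F}_3)\) automatically contains the required \(S_3=\langle 6,1\rangle\); the relation-rank bound \(2\le d_2\le 5\) is checked by direct pc-computation. With this filter in place, the final and decisive step is to compute \(\mathrm{AP}_2(G)\) for each remaining candidate and match it against the observed \(\mathrm{AP}_2(\mathrm{k})\). For \(G\in\lbrace\langle 729,17\rangle,\langle 729,20\rangle\rbrace\) the computation yields \(\tau_2(G)=\lbrack(9,3)^3;(9,3,3)\rbrack\), while for \(G\in\lbrace\langle 2187,m\rangle\rbrace\) it produces \(\tau_2(G)=\lbrack(9,9)^3;(9,9,3)\rbrack\); the first three kernel components of \(\varkappa_2\) are equal to \(C_{\mathrm{k},3}\) by Theorem \ref{Capitulation93}(5) applied to the three conjugate cubic subfields, and the fourth kernel equals \(H_{4,3}\) because \(K_{4,9}\) corresponds to the ambiguous class group \(H_{4,9}\subseteq H_{4,3}\). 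This produces exactly the dichotomy of equation \eqref{eqn:FirstVariantTower}.

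The principal obstacle is ruling out that \textit{any} further finite \(3\)-group (metabelian or not, of order \(\ge 6561\)) could also realize the prescribed \((\mathrm{AP},\mathrm{AP}_2)\). This is handled by two antitony arguments applied in tandem. At the first layer, a non-metabelian descendant \(G\) would still force \(G/G''\) onto one of the six listed candidates, and hence reproduce the rank-\(3\) component \(\tau_4=(9,3,3)\); at the second layer, any proper descendant of the order-\(2187\) candidates has at least one component of \(\tau_2\) of rank \(\ge 3\) or abelian type invariants strictly larger than \((9,9)\), contradicting the observed \(\tau_2\). Combined with the uniqueness of the metabelianization, this yields the bijective correspondence between the variants of \(\mathrm{AP}_2(\mathrm{k})\) listed and the isomorphism class of \(G_2\), finishing the proof.
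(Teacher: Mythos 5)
Your overall architecture (first-layer SmallGroups search, $S_3$-action filter, relation-rank bound, second-layer discrimination) is the same as the paper's, but three concrete steps are wrong or gapped. First, the search for the first-layer pattern does \emph{not} deliver ``precisely six'' candidates: as the paper's proof states, \emph{all} vertices of the entire descendant trees of \(\langle 729,17\rangle\) and \(\langle 729,20\rangle\) --- including \(\langle 2187,179\rangle\), \(\langle 6561,1731\rangle\), \(\langle 6561,1733\rangle\), \(\langle 6561,1733\rangle-\#1;2\), \(\langle 6561,1733\rangle-\#1;3\), and infinitely many deeper (also non-metabelian) vertices --- share \(\varkappa=(123;4)\) and \(\tau=\lbrack(27,3)^3;(9,3,3)\rbrack\). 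The first-layer antitony argument only excludes groups \emph{outside} these two trees; the whole difficulty of the theorem is eliminating the deeper vertices \emph{inside} them, which your candidate list silently omits. Second, your reading of the $S_3$ condition is vacuous: \(\mathrm{Aut}(C_3\times C_3)=\mathrm{GL}_2(\mathbb{F}_3)\) contains $S_3$ for \emph{every} two-generated $3$-group, so under your interpretation the filter never eliminates anything. What is actually required (and used in the paper, e.g.\ to discard \(\langle 2187,179\rangle\) with ``action by $C_2$ only'', and \(\langle 243,22\rangle\) in the distinguished case) is that the \(\mathrm{Gal}(\mathrm{k}/\mathbb{Q})\simeq S_3\)-action on \(C_{\mathrm{k},3}\simeq G/G^\prime\) lifts to $G$, i.e.\ that the \emph{image of} \(\mathrm{Aut}(G)\) in \(\mathrm{Aut}(Q)\) contains an $S_3$.

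Third, your closing claim that a non-metabelian candidate $G$ ``forces $G/G^{\prime\prime}$ onto one of the six listed candidates'' is false: the trees contain non-metabelian groups whose metabelianization is \(\langle 6561,1733\rangle\) or deeper (cf.\ Remark \ref{rmk:FirstVariantTower}), and all of these survive the first layer. The paper disposes of the surviving candidates of order \(\ge 6561\) with $S_3$-action not by a rank estimate on \(\tau_2\) but by exhibiting their \emph{forbidden second-layer transfer kernel} \(\varkappa_2=(H_{1,3},H_{2,3},H_{3,3};H_{4,3})\), which contradicts the arithmetical fact \(\ker(T_{K_{1,9}/\mathrm{k}})=C_{\mathrm{k},3}\) from item (5) of Theorem \ref{Capitulation93}. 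Your substitute --- that proper descendants of the order-$2187$ candidates have \(\tau_2\) ``strictly larger'' --- is not justified as stated, since the antitony principle only yields a non-strict inequality \(\tau_2(D)\ge\tau_2(P)\). Until the candidate enumeration is completed and the $S_3$ filter and second-layer elimination are applied in their correct (non-vacuous) form, the dichotomy in \eqref{eqn:FirstVariantTower} is not established.
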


\begin{proof}
(Proof of Theorem
\ref{thm:FirstVariantTower})
All vertices of the entire descendant trees of the roots
\(\langle 729,i\rangle\) with \(i\in\lbrace 17,20\rbrace\)
share the required Artin pattern \((\varkappa,\tau)\)
with harmonically balanced capitulation \(\varkappa=(123;4)\)
and the first variant of \(\tau=\lbrack (27,3)^3;(9,3,3)\rbrack\).
Since the trees are isomorphic as \textit{structured} graphs,
we focus on \(\langle 729,17\rangle\),
which gives rise to a \textit{finite} \lq\lq mainline\rq\rq,
standing out through an action by the direct product \(S_3\times C_2\simeq\langle 12,4\rangle\).
The metabelian vertices of this finite mainline are 
\(\langle 729,17\rangle\), \(\langle 2187,178\rangle\), \(\langle 6561,1733\rangle\),
and \(\langle 6561,1733\rangle-\#1;2\).
The other two immediate descendants of the root \(\langle 729,17\rangle\) are
\(\langle 2187,177\rangle\) with action by \(S_3\)
and \(\langle 2187,179\rangle\) with action by \(C_2\) only.
There are exactly two further candidates for \(G_2\) with action by \(S_3\),
namely the metabelian groups \(\langle 6561,1731\rangle\) and \(\langle 6561,1733\rangle-\#1;3\).
However,
\(\langle 6561,n\rangle\) with \(n\in\lbrace 1731,1733\rbrace\) and
\(\langle 6561,1733\rangle-\#1;s\) with \(s\in\lbrace 2,3\rbrace\)
share the forbidden second layer
\(\varkappa_2=(H_{1,3},H_{2,3},H_{3,3};H_{4,3})\), \(\tau_2=\lbrack (27,9)^3;(9,9,3)\rbrack\).
See Figures
\ref{fig:Ord729Id17}
and
\ref{fig:Ord729Id20}.
\end{proof}


\begin{proposition}
\label{prp:SecondVariantTower}
A power commutator presentation of the finite metabelian \(3\)-group
\(\langle 2187,i\rangle\)
in terms of the commutators \(s_2=\lbrack y,x\rbrack\), \(s_3=\lbrack s_2,x\rbrack\), \(s_4=\lbrack s_3,x\rbrack\), \(t_3=\lbrack s_2,y\rbrack\) is given by
\begin{equation}
\label{eqn:PCPresentationOrd2187Id180And190}
\begin{cases}
\langle x,y,s_2,s_3,s_4,t_3\mid x^9=t_3,\ y^3=s_3^2,\ s_2^3=s_4^2\rangle   & \text{if } i=180, \\
\langle x,y,s_2,s_3,s_4,t_3\mid x^9=t_3^2,\ y^3=s_3^2,\ s_2^3=s_4^2\rangle & \text{if } i=190.
\end{cases}
\end{equation}
\end{proposition}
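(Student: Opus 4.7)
The plan is to follow the template used in the proofs of Propositions~\ref{prp:DistinguishedTower} and~\ref{prp:FirstVariantTower}: I would retrieve the two finite \(3\)-groups $\langle 2187,180\rangle$ and $\langle 2187,190\rangle$ from the SmallGroups database via Magma~\cite{MAGMA}, and then match the resulting power-commutator presentations with the compact commutator-based presentations claimed in \eqref{eqn:PCPresentationOrd2187Id180And190}.

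First I would invoke \texttt{SmallGroup(2187,180)} and \texttt{SmallGroup(2187,190)}, then verify, using the commands for the derived series and for the lower central series, that both groups are metabelian of order $3^7=2187$, and that they have nilpotency class exactly $4$; this is forced by the fact that $s_4=[[[y,x],x],x]$ occurs as a non-trivial pc-generator on the right-hand side of a power relation, while no commutator of weight $5$ does. Second, I would perform a change of generating set, replacing the default pc-generators returned by Magma by the commutator basis $(x,y,s_2,s_3,s_4,t_3)$, where $s_2=[y,x]$, $s_3=[s_2,x]$, $s_4=[s_3,x]$, and $t_3=[s_2,y]$. After this change of basis, I would read off the three non-trivial power relations by reducing $x^9$, $y^3$ and $s_2^3$ to collected normal form. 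Direct inspection should yield $x^9=t_3$ for $i=180$ and $x^9=t_3^2$ for $i=190$, together with $y^3=s_3^2$ and $s_2^3=s_4^2$ in both cases; every other candidate relation is a commutator relation of weight $\ge 5$ and therefore trivial in these class-$4$ groups.

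The main obstacle will be purely bookkeeping: Magma's pc-presentations use collected sequences of consecutive pc-generators in a power-conjugate normal form which does not literally coincide with the commutator ordering used in the statement. Verifying that the two presentations encode the same abstract group — in particular that the \emph{absence} of further non-trivial power relations on $s_3$, $s_4$, $t_3$ is consistent with the orders of these elements predicted by the lower central series quotients — requires careful weight-counting and the systematic use of the Jacobi and Hall--Witt identities to move every remaining commutator of weight $\ge 5$ to the trivial element. Once the lower central series quotients have been computed, however, the identification is forced, and the proof reduces to a verification that can (and should) be carried out within Magma, exactly as for the companion Propositions~\ref{prp:DistinguishedTower} and~\ref{prp:FirstVariantTower}.
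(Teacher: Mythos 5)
Your proposal is correct and follows essentially the same route as the paper, whose entire proof is the one-line appeal to the Magma implementation of the SmallGroups database together with the observation that the groups are (periodic, metabelian) of class \(4\) and coclass \(3\); your additional remarks on converting Magma's collected pc-presentation to the commutator basis \((x,y,s_2,s_3,s_4,t_3)\) and on why no relations of weight \(\ge 5\) survive simply make explicit the bookkeeping the paper leaves implicit.
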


\begin{proof}
Presentations of groups in the SmallGroups database are implemented in Magma
\cite{MAGMA}.
The groups are periodic of class \(4\) and coclass \(3\).
\end{proof}


\begin{theorem}
\label{thm:SecondVariantTower}
For a pure metacyclic field \(\mathrm{k}=\mathbb{Q}(\zeta_3,\sqrt[3]{p})\)
with \(p\equiv 1\,(\mathrm{mod}\,9)\)
having harmonically balanced capitulation \(\varkappa(\mathrm{k})=(123;4)\)
and \textbf{second} variant of \(\tau=\lbrack (27,3)^3; (9,9,3)\rbrack\),
the \textbf{periodic} Galois group \(G_2\) of the second Hilbert \(3\)-class field \(\mathrm{k}_3^{(2)}\)
is given by 
\(\mathrm{Gal}(\mathrm{k}_3^{(2)}/\mathrm{k})\simeq\)
\begin{equation}
\label{eqn:SecondVariantTower}
\begin{cases}
\langle 2187,m\rangle       & \text{ if } \varkappa_2(\mathrm{k})=((C_{\mathrm{k},3})^3;H_{4,3}),\ \tau_2(\mathrm{k})=\lbrack (9,9)^3;(9,9,3)\rbrack, \\
\langle 6561,n\rangle       & \text{ if } \varkappa_2(\mathrm{k})=((C_{\mathrm{k},3})^3;H_{4,3}),\ \tau_2(\mathrm{k})=\lbrack (27,9)^3;(9,9,9)\rbrack,
\end{cases}
\end{equation}
where \(m\in\lbrace 180,190\rbrace\) and \(n\in\lbrace 1737,1738,1739,1775,1776,1777\rbrace\) (see Figures
\ref{fig:Ord729Id18} and \ref{fig:Ord729Id21}).
\end{theorem}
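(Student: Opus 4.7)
The plan is to mirror closely the strategy used in the proof of Theorem \ref{thm:FirstVariantTower}, adapted to the setting of \emph{periodic} groups on coclass trees rather than \emph{sporadic} groups. First, by the antitony of the Artin pattern components \((\varkappa,\tau)\) under the (parent, descendant) relation, combined with Theorem \ref{thm:SimplifiedComputation}, any candidate \(G_2\) whose first-layer Artin pattern has harmonically balanced \(\varkappa=(123;4)\) together with the second variant \(\tau=\lbrack (27,3)^3;(9,9,3)\rbrack\) must be a descendant of one of the two roots \(\langle 2187,m\rangle\), \(m\in\lbrace 180,190\rbrace\), whose power commutator presentations are recorded in Proposition \ref{prp:SecondVariantTower}. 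These two roots sit on isomorphic infinite coclass-\(3\) mainlines, which explains the label \textbf{periodic}.

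Next I would impose the Galois-action constraint provided by Corollary \ref{cor:GaloisAction} applied with \(n=2\): the automorphism group \(\mathrm{Aut}(G_2/\Phi(G_2))\) of the Frattini quotient must contain a subgroup isomorphic to \(S_3=\langle 6,1\rangle\), reflecting the action of the absolute Galois group \(\mathrm{Gal}(\mathrm{k}/\mathbb{Q})\simeq S_3\) on \(C_{\mathrm{k},3}\simeq G_2/G_2^\prime\). Combined with the relation rank bounds \(2\le d_2\le 5\) from Theorem \ref{thm:RelationRankAndGaloisAction}(1), this prunes the forest of descendants of \(\langle 2187,180\rangle\) and \(\langle 2187,190\rangle\) to a manageable list. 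Vertices that admit only a \(C_2=\langle 2,1\rangle\) action on the Frattini quotient (exactly the mechanism used to eliminate \(\langle 243,22\rangle\) in Theorem \ref{thm:DistinguishedTower} and \(\langle 2187,179\rangle\) in Theorem \ref{thm:FirstVariantTower}) are discarded here as well.

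Then I would split the argument according to the second-layer Artin pattern \(\mathrm{AP}_2(\mathrm{k})=(\varkappa_2(\mathrm{k}),\tau_2(\mathrm{k}))\) of Definition \ref{dfn:ArtinPattern2}. Item (5) of Theorem \ref{Capitulation93} already yields \(\ker(T_{K_{1,9}/\mathrm{k}})=C_{\mathrm{k},3}\), which is consistent with the common shape \(((C_{\mathrm{k},3})^3;H_{4,3})\) of \(\varkappa_2\) in both cases of \eqref{eqn:SecondVariantTower}. The two tabulated possibilities for \(\tau_2\) then select, by direct inspection along the branches of the coclass trees of \(\langle 729,18\rangle\) and \(\langle 729,21\rangle\) (Figures \ref{fig:Ord729Id18} and \ref{fig:Ord729Id21}), precisely the order-\(2187\) candidates \(\lbrace\langle 2187,180\rangle,\langle 2187,190\rangle\rbrace\) when \(\tau_2=\lbrack (9,9)^3;(9,9,3)\rbrack\), and precisely the order-\(6561\) candidates \(\lbrace\langle 6561,n\rangle:n\in\lbrace 1737,1738,1739,1775,1776,1777\rbrace\rbrace\) when \(\tau_2=\lbrack (27,9)^3;(9,9,9)\rbrack\).

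The main obstacle will be the infiniteness of the ambient coclass trees: unlike the sporadic situation of Theorem \ref{thm:FirstVariantTower}, where a short finite mainline could be enumerated exhaustively, here one must simultaneously invoke antitony, the \(S_3\)-action constraint, the relation rank bound, and the second-layer data \(\tau_2\) to cut the infinite descendant trees down to the two finite lists appearing in the statement. Once this pruning is carried out tree-branch by tree-branch via Magma \cite{MAGMA}, the Artin reciprocity law identifies each surviving group with the Galois group \(G_2=\mathrm{Gal}(\mathrm{k}_3^{(2)}/\mathrm{k})\) as asserted.
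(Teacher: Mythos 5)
Your proposal follows essentially the same route as the paper's proof: restrict attention to the coclass trees rooted at \(\langle 729,18\rangle\) and \(\langle 729,21\rangle\) (equivalently, to \(\langle 2187,180\rangle\), \(\langle 2187,190\rangle\) and their descendants, the sporadic trees and mainline branches being excluded by the stability, resp. growth, of \(\tau\) under antitony), then prune the surviving vertices by the \(S_3\)-action requirement of Corollary \ref{cor:GaloisAction} and finally separate the order-\(3^7\) and order-\(3^8\) candidates, and eliminate the deeper vertices \(\langle 6561,n\rangle-\#1;1\), by the second-layer data \((\varkappa_2,\tau_2)\). The only inessential differences are that the relation rank bound you invoke does no actual pruning here (all candidates satisfy \(3\le d_2\le 4\)), and that the paper also discards vertices with only a \(C_3\)-action, not just a \(C_2\)-action.
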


\begin{proof}
(Proof of Theorem
\ref{thm:SecondVariantTower})
The required Artin pattern \((\varkappa,\tau)\)
with harmonically balanced capitulation \(\varkappa=(123;4)\)
and second variant of \(\tau=\lbrack (27,3)^3;(9,9,3)\rbrack\)
cannot occur for descendants of the roots \(\langle 729,i\rangle\) with \(i\in\lbrace 17,20\rbrace\),
because on the entire descendant trees of these sporadic roots
\(\tau=\lbrack (27,3)^3;(9,3,3)\rbrack\) remains \textit{stable}.

The only possibility are vertices of the coclass trees
with roots \(\langle 729,i\rangle\) for \(i\in\lbrace 18,21\rbrace\).
Since the trees are isomorphic as \textit{structured} graphs,
we focus on \(\langle 729,21\rangle\),
which has three immediate descendants,
\(\langle 2187,190\rangle\) with \(\varkappa=(123;4)\), \(\tau=\lbrack (27,3)^3;(9,9,3)\rbrack\),
the mainline group \(\langle 2187,191\rangle\) with host type \(\varkappa=(123;0)\) like the parent \(\langle 729,21\rangle\),
and \(\langle 2187,192\rangle\) with inadequate \(\varkappa=(123;2)\).
Due to the \textit{antitony principle} for the components of the Artin pattern \((\varkappa,\tau)\),
all descendants of \(\langle 2187,191\rangle\) can be eliminated,
because they have \(\tau\ge\lbrack (27,3)^3;(27,9,3)\rbrack\).
The group \(\langle 2187,190\rangle\) has the required action by \(S_3=\langle 6,1\rangle\),
and this is also true for three of its immediate descendants
\(\langle 6561,n\rangle\) with \(1775\le n\le 1777\)
but not for \(n=1778\) with action by \(C_3=\langle 3,1\rangle\) only.
Each of the three former has an immediate descendant
\(\langle 6561,n\rangle-\#1;1\) with \(1775\le n\le 1777\) and action by \(S_3\).
The other descendant \(\langle 6561,n\rangle-\#1;2\) has action by \(C_3\),
and three further descendants \(\langle 6561,n\rangle-\#1;1-\#1;i\) with \(1\le i\le 3\)
have only an action by \(C_2=\langle 2,1\rangle\).
Further suitable candidates for \(G_2\) are impossible.
Finally,  the groups \(\langle 6561,n\rangle-\#1;1\) with \(n\in\lbrace 1775,1776,1777\rbrace\)
are discouraged by a wrong transfer kernel in the second layer with
\(\varkappa_2=(H_{1,3},H_{2,3},H_{3,3};H_{4,3})\), \(\tau_2=\lbrack (27,27)^3;(9,9,9)\rbrack\).
See Figures
\ref{fig:Ord729Id18}
and
\ref{fig:Ord729Id21}.
\end{proof}


\noindent
The proofs of the subsequent corollaries are based on the following fact.
All the candidates for \(G_2\) 
in Theorem \ref{thm:FirstVariantTower} and Theorem \ref{thm:SecondVariantTower}
satisfy the inequalities
\(2\le d_2\le 5\) for the \textit{relation rank} in Theorem
\ref{thm:RelationRankAndGaloisAction},
since they even satisfy the more severe estimates \(3\le d_2\le 4\).
So there is no reason which precludes a metabelian tower with length \(\ell_3(\mathrm{k})=2\).


\begin{corollary}
\label{cor:FirstVariantTower}
For the fields \(\mathrm{k}\) with harmonically balanced capitulation \(\varkappa=(123;4)\)
and \textbf{first} variant of \(\tau=\lbrack (27,3)^3; (9,3,3)\rbrack\)
(Theorem \ref{thm:FirstVariantTower})
the \(3\)-class tower of \(\mathrm{k}\) must stop at the second stage, that is,
\(\mathrm{k}_3^{(2)}=\mathrm{k}_3^{(\infty)}\) is the maximal unramified pro-\(3\)-extension of \(\mathrm{k}\).
\end{corollary}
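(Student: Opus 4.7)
The plan is to show the tower group $G:=\mathrm{Gal}(\mathrm{k}_3^{(\infty)}/\mathrm{k})$ is forced to coincide with its metabelianisation $G_2=\mathrm{Gal}(\mathrm{k}_3^{(2)}/\mathrm{k})$. By Theorem \ref{thm:FirstVariantTower}, $G_2$ is one of the metabelian candidates $\langle 729,\ell\rangle$ with $\ell\in\lbrace 17,20\rbrace$, or $\langle 2187,m\rangle$ with $m\in\lbrace 177,178,187,188\rbrace$. The assertion $\mathrm{k}_3^{(2)}=\mathrm{k}_3^{(\infty)}$ is equivalent to $G^{\prime\prime}=1$, so it suffices to exclude any proper non-metabelian descendant $D$ of $G_2$, with $D/D^{\prime\prime}\simeq G_2$, from being the tower group.

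First, I would recall the observation made in the paragraph preceding the corollary that each candidate $G_2$ already has relation rank $d_2\in\lbrace 3,4\rbrace$, safely within the Shafarevich bound $2\le d_2\le 5$ of Theorem \ref{thm:RelationRankAndGaloisAction}(1). Hence the choice $G=G_2$ is already compatible with all of the analytic constraints on the tower group, so a length-$2$ tower is \emph{a priori} admissible and only non-metabelian alternatives remain to be ruled out.

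To rule out a longer tower, I would enumerate, using the SmallGroups database and the $p$-group generation algorithm in Magma \cite{MAGMA}, the proper non-metabelian descendants $D$ of each candidate $G_2$ that still satisfy the two conditions of Theorem \ref{thm:RelationRankAndGaloisAction}: namely $d_2(D)\le 5$, together with a subgroup $S_3\simeq\langle 6,1\rangle$ of $\mathrm{Aut}(D/\Phi(D))$. As already exploited in the proof of Theorem \ref{thm:FirstVariantTower}, the sporadic coclass-$3$ roots $\langle 729,17\rangle$ and $\langle 729,20\rangle$ generate descendant trees whose relevant portions are \emph{finite}, so this enumeration terminates after inspecting finitely many vertices. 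Any surviving non-metabelian candidate will be shown to violate the required second-layer Artin pattern $(\varkappa_2,\tau_2)$ listed in \eqref{eqn:FirstVariantTower}, invoking the antitony principle $\tau_2(P)\le\tau_2(D)$ for (parent, descendant)-pairs, exactly as the groups $\langle 6561,1731\rangle$, $\langle 6561,1733\rangle$ and $\langle 6561,1733\rangle-\#1;s$ were excluded in the proof of Theorem \ref{thm:FirstVariantTower}.

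The step I expect to be the main obstacle is this last one: a careful computer-algebra case analysis of the non-metabelian descendants at orders $3^8$ and $3^9$, simultaneously checking the Shafarevich bound, the $S_3$-action on the Frattini quotient, and the second-layer transfer kernels and targets. Once no non-metabelian candidate survives all three tests, we conclude $G^{\prime\prime}=1$, whence the Artin reciprocity law identifies $\mathrm{k}_3^{(2)}$ with the maximal unramified pro-$3$-extension $\mathrm{k}_3^{(\infty)}$ of $\mathrm{k}$.
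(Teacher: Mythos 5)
Your proposal follows essentially the same route as the paper: the paper's entire proof is the one-line assertion that the candidate groups \(\langle 729,\ell\rangle\) and \(\langle 2187,m\rangle\) admitted by Theorem \ref{thm:FirstVariantTower} are not second derived quotients \(G/G^{\prime\prime}\) of any non-metabelian \(3\)-group, which is exactly the (empty) outcome of the descendant search you describe, so your argument and the paper's coincide in substance. One small correction: the descendant trees of \(\langle 729,17\rangle\) and \(\langle 729,20\rangle\) are not finite --- Remark \ref{rmk:FirstVariantTower} notes they contain vertices of unbounded derived length, and it is only the mainline that is finite --- but the set of vertices whose metabelianisation is isomorphic to one of the admissible \(G_2\) is empty (the groups with non-metabelian covers, such as \(\langle 6561,1733\rangle\), were already eliminated in Theorem \ref{thm:FirstVariantTower} by the second-layer data), so your enumeration terminates and your anticipated second-layer exclusion step turns out to be vacuous.
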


\begin{proof}
(Proof of Corollary
\ref{cor:FirstVariantTower})
The groups \(G_2\) in Theorem
\ref{thm:FirstVariantTower}
are not second derived quotients \(G/G^{\prime\prime}\) of non-metabelian \(3\)-groups \(G\).
\end{proof}

\begin{remark}
\label{rmk:FirstVariantTower}
We emphasize that the strict limitation \(\ell_3(\mathrm{k})=2\)
for the length of the \(3\)-class field tower of \(\mathrm{k}\) in Corollary
\ref{cor:FirstVariantTower}
is only due to item (5) of Theorem
\ref{Capitulation93},
i.e. the requirement \(\ker(T_{K_{1,9}/\mathrm{k}})=C_{\mathrm{k},3}\)
for the second layer.

Although we also had \(\ell_3(\mathrm{k})=2\)
if \(G_2=\langle 6561,n\rangle\), \(n\in\lbrace 1731,1769\rbrace\), were admissible,
\(\ell_3(\mathrm{k})=3\) would be enabled for \(r\in\lbrace 1733,1771\rbrace\), and
\(\langle 6561,r\rangle\simeq G/G^{\prime\prime}\) with \(G=\langle 6561,r\rangle-\#1;4\),
resp.
\(\langle 6561,r\rangle-\#1;s\simeq G/G^{\prime\prime}\)
with \(G=\langle 2187,m\rangle-\#2;1-\#1;s\), \(m\in\lbrace 178,188\rbrace\), \(s\in\lbrace 2,3\rbrace\).
The groups \(G\) are non-metabelian with action by \(S_3\),
in contrast to \(\langle 6561,r\rangle\) with \(r\in\lbrace 1735,1773\rbrace\).
On the other hand, it is known that the descendant trees of the roots
\(\langle 729,i\rangle\) with \(i\in\lbrace 17,20\rbrace\)
contain vertices with unbounded derived length,
whence any finite value \(\ell_3(\mathrm{k})\ge 4\) would also be possible.
(Note that the tree continues at the non-metabelian vertex
\(\langle 2187,m\rangle-\#2;1-\#1;2\) with \(m=178\), resp. \(m=188\).)
\end{remark}


\begin{corollary}
\label{cor:SecondVariantTower}
For the fields \(\mathrm{k}\) with harmonically balanced capitulation \(\varkappa=(123;4)\)
and \textbf{second} variant of \(\tau=\lbrack (27,3)^3; (9,9,3)\rbrack\)
(Theorem \ref{thm:SecondVariantTower})
the \(3\)-class tower of \(\mathrm{k}\) must stop at the second stage, that is,
\(\mathrm{k}_3^{(2)}=\mathrm{k}_3^{(\infty)}\) is the maximal unramified pro-\(3\)-extension of \(\mathrm{k}\).
\end{corollary}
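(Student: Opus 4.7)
The plan is to adapt the one-line argument used for Corollary \ref{cor:FirstVariantTower}, keeping in mind that in the present situation the candidate groups listed in Theorem \ref{thm:SecondVariantTower} lie on the \emph{coclass-3 coclass trees} with roots $\langle 729,18\rangle$ and $\langle 729,21\rangle$. These trees are infinite and are known to contain vertices of unbounded nilpotency class, so a priori non-metabelian descendants with prescribed second derived quotient could exist. To conclude $\ell_3(\mathrm{k})=2$ I must show that none of the admissible metabelian candidates $M\in\{\langle 2187,180\rangle,\langle 2187,190\rangle\}\cup\{\langle 6561,n\rangle:\,n\in\{1737,1738,1739,1775,1776,1777\}\}$ occurs as $G/G^{\prime\prime}$ for a non-metabelian finite $3$-group $G$ that is compatible with the full Artin pattern and with the constraints of Theorem \ref{thm:RelationRankAndGaloisAction}.

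First I would invoke item (5) of Theorem \ref{Capitulation93}, which forces $\ker(T_{K_{1,9}/\mathrm{k}})=C_{\mathrm{k},3}$; in the notation of Definition \ref{dfn:ArtinPattern2} this pins down the distinguished component of $\varkappa_2(\mathrm{k})$ to the whole group. Combining this with the antitony principle $\varkappa(P)\ge\varkappa(D)$, $\tau(P)\le\tau(D)$ for (parent, descendant)-pairs, I would enumerate in Magma the iterated descendants of each $M$ via the $p$-group generation process (the $\#1;i$ and $\#2;i$ steps already invoked in the proof of Theorem \ref{thm:SecondVariantTower}) and discard every vertex whose first-layer $\tau$ strictly exceeds $\lbrack(27,3)^3;(9,9,3)\rbrack$, whose second-layer $\varkappa_2$ loses the total-capitulation component on the distinguished place, or whose Frattini-quotient automorphism group fails to contain $S_3=\langle 6,1\rangle$ as demanded by Theorem \ref{thm:RelationRankAndGaloisAction}(2). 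The tree analysis already carried out in the proof of Theorem \ref{thm:SecondVariantTower} shows that the surviving vertices with the correct $S_3$-action on $Q=G/\Phi(G)$ are precisely the metabelian candidates themselves.

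The main obstacle, exactly as hinted at in Remark \ref{rmk:FirstVariantTower} for the sporadic variant, is to prevent the periodicity of the coclass trees from concealing a non-metabelian descendant whose Artin pattern agrees with $\mathrm{AP}(\mathrm{k})$ through the first and second layer; the decisive device for ruling this out is item (5) of Theorem \ref{Capitulation93}, because without that constraint the $\#1;s$ extensions producing non-metabelian branches are not blocked. Once all non-metabelian candidates are excluded, Artin reciprocity identifies $\mathrm{Gal}(\mathrm{k}_3^{(\infty)}/\mathrm{k})$ with the metabelian $G_2$ supplied by Theorem \ref{thm:SecondVariantTower}, so $\mathrm{k}_3^{(\infty)}=\mathrm{k}_3^{(2)}$ as claimed. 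Relation-rank feasibility is automatic, since each of the listed $G_2$ already satisfies the tighter bounds $3\le d_2\le 4$ noted in the paragraph preceding Corollary \ref{cor:FirstVariantTower}, which are comfortably inside the window $2\le d_2\le 5$ of Theorem \ref{thm:RelationRankAndGaloisAction}(1).
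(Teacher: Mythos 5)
Your proposal reaches the right conclusion and its discard criteria do subsume the paper's argument, but the emphasis is misplaced and the route is considerably longer than necessary. The paper's own proof of Corollary \ref{cor:SecondVariantTower} is a one-line application of the antitony principle: the descendant-tree data already computed for Theorem \ref{thm:SecondVariantTower} show that every proper descendant $D$ of the eight listed candidates has first-layer abelian type invariants $\tau(D)$ strictly exceeding $\lbrack(27,3)^3;(9,9,3)\rbrack$, so no non-metabelian $G$ can have $G/G^{\prime\prime}$ isomorphic to one of them while still matching $\tau(\mathrm{k})$; hence $\mathrm{k}_3^{(2)}=\mathrm{k}_3^{(\infty)}$. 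Your claim that item (5) of Theorem \ref{Capitulation93} (the second-layer kernel $\ker(T_{K_{1,9}/\mathrm{k}})=C_{\mathrm{k},3}$) is \emph{the decisive device} for blocking non-metabelian branches transplants the logic of Remark \ref{rmk:FirstVariantTower}, which concerns the \emph{first} variant, to the second variant where it does not apply: in the proof of Theorem \ref{thm:SecondVariantTower} the second-layer data serve only to eliminate the additional \emph{metabelian} candidates $\langle 6561,n\rangle-\#1;1$, and in Figure \ref{fig:Ord729Id21} the non-metabelian vertices appear only at order $3^{11}$, below groups already excluded by the $S_3$-action requirement and by the growth of $\tau$. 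So the second-layer constraint and the Galois-action constraint in your enumeration are harmless but not what carries the corollary; what carries it is the antitony discard on $\tau$, which you list but do not single out. If you streamline your argument to state explicitly that all descendants of the candidates violate $\tau(\mathrm{k})=\lbrack(27,3)^3;(9,9,3)\rbrack$, you recover the paper's proof; as written, your proposal is correct but attributes the conclusion to the wrong mechanism. (Your remark on relation ranks $3\le d_2\le 4$ agrees with the paper's observation preceding Corollary \ref{cor:FirstVariantTower} and is fine.)
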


\begin{proof}
(Proof of Corollary
\ref{cor:SecondVariantTower})
According to the \textit{antitony principle},
there cannot exist non-metabelian \(3\)-groups \(G\)
whose metabelianization \(G/G^{\prime\prime}\)
is isomorphic to one of the \(14\) candidates
for \(\mathrm{Gal}(\mathrm{k}_3^{(2)}/\mathrm{k})\) in Theorem
\ref{thm:SecondVariantTower}.
Therefore \(\mathrm{k}_3^{(2)}=\mathrm{k}_3^{(\infty)}\).
\end{proof}


\subsection{Total capitulation}
\label{ss:Total}

\noindent
Due to the wealth of metabelian groups \(M\) of low orders \(\#M\le 3^8\)
in the descendant tree of the root \(\langle 729,9\rangle\),
we restrict ourselves to \textit{immediate} descendants of the root with action by \(S_3\).


\begin{proposition}
\label{prp:TotalTower}
A power commutator presentation of the finite metabelian \(3\)-group \(\langle 729,9\rangle\)
in terms of the commutators \(s_2=\lbrack y,x\rbrack\), \(s_3=\lbrack s_2,x\rbrack\), \(t_3=\lbrack s_2,y\rbrack\) is given by
\begin{equation}
\label{eqn:PCPresentationOrd729Id9}
\langle x,y,s_2,s_3,t_3\mid x^9=1,\ y^3=1\rangle.
\end{equation}
\end{proposition}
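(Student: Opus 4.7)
The plan is to follow the pattern set in Propositions \ref{prp:DistinguishedTower}, \ref{prp:FirstVariantTower}, and \ref{prp:SecondVariantTower}: extract a power-commutator presentation of the group \(G=\langle 729,9\rangle\) from the SmallGroups database via Magma \cite{MAGMA}, and then rewrite it so that its pc-generators are exactly \(x,y,s_2,s_3,t_3\) with \(s_2=\lbrack y,x\rbrack\), \(s_3=\lbrack s_2,x\rbrack\), \(t_3=\lbrack s_2,y\rbrack\). Since \(G\) has order \(3^6\) and the listed commutator tower has length three, the nilpotency class of \(G\) must be verified to equal \(3\); equivalently, one checks that the fourth term \(\gamma_4(G)\) of the lower central series is trivial, so that \(s_3\) and \(t_3\) are central and no further iterated commutators are needed as pc-generators.

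First, I would call \(\mathrm{PCGroup}(\mathrm{SmallGroup}(729,9))\) in Magma and read off the canonical pc-presentation. Next, I would select two elements \(x,y\in G\) whose images generate the Frattini quotient \(Q=G/\Phi(G)\simeq C_3\times C_3\), define the three commutators above, and confirm that the family \((x,y,s_2,s_3,t_3)\) — together with one further hidden pc-generator in \(\gamma_3(G)\) absorbed into the \(s_3,t_3\) layer — is a polycyclic generating sequence. Finally, I would verify the two explicit power relations \(x^9=1\) and \(y^3=1\); this is the crucial point that \emph{distinguishes} \(\langle 729,9\rangle\) from its siblings \(\langle 729,i\rangle\) with \(i\in\{17,18,20,21\}\) appearing in Propositions \ref{prp:FirstVariantTower}–\ref{prp:SecondVariantTower}, where the analogous right-hand sides are nontrivial commutators such as \(t_3\) or \(s_3\).

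The main (mild) obstacle is a bookkeeping issue: the native Magma presentation of \(\mathrm{SmallGroup}(729,9)\) typically uses six abstract pc-generators \(a_1,\ldots,a_6\) with prescribed conjugation relations, and one has to match these consistently with the chosen \(x,y\) and the iterated commutators \(s_2,s_3,t_3\). Once the identification is made, the presentation collapses to the claimed one because all commutator-power relations inside \(\gamma_2(G)\) either become trivial (the class-\(3\) condition forces \(\lbrack s_3,x\rbrack=\lbrack s_3,y\rbrack=\lbrack t_3,x\rbrack=\lbrack t_3,y\rbrack=1\)) or are absorbed into the definitions of \(s_3,t_3\). No substantive computation remains beyond confirming the two explicit power relations, which is immediate from the database entry.
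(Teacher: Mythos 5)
Your proposal is correct and follows essentially the same route as the paper, whose entire proof is the one-line observation that the presentation is read off from the SmallGroups database via Magma, together with the remark that the group is of class \(3\) and coclass \(3\). The only quibble is your aside about a \lq\lq hidden pc-generator in \(\gamma_3(G)\)\rq\rq: since \(x\) has order \(9\), the sixth pc-generator of this group of order \(3^6\) is \(x^3\), not an extra element of \(\gamma_3(G)=\langle s_3,t_3\rangle\), but this bookkeeping slip does not affect the argument.
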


\begin{proof}
Presentations of groups in the SmallGroups database are implemented in Magma
\cite{MAGMA}.
The group is periodic of class \(3\) and coclass \(3\).
\end{proof}

\begin{theorem}
\label{thm:TotalTower}
For a pure metacyclic field \(\mathrm{k}=\mathbb{Q}(\zeta_3,\sqrt[3]{p})\)
with \(p\equiv 1\,(\mathrm{mod}\,9)\)
having total capitulation \(\varkappa(\mathrm{k})=(000;4)\)
and abelian type invariants \(\tau=\lbrack (9,3,3)^3; (3,3,3,3)\rbrack\),
the smallest possible Galois groups \(G_2\) of the second Hilbert \(3\)-class field \(\mathrm{k}_3^{(2)}\)
are given by 
\begin{equation}
\label{eqn:TotalTower}
\mathrm{Gal}(\mathrm{k}_3^{(2)}/\mathrm{k})\simeq
\begin{cases}
\langle 729,9\rangle    & \text{ if } \tau_2(k)=\lbrack (3,3,3)^3;(3,3,3,3)\rbrack, \\
\langle 2187,123\rangle & \text{ if } \tau_2(k)=\lbrack (3,3,3,3)^3;(3,3,3,3,3)\rbrack, \\
\langle 2187,124\rangle & \text{ if } \tau_2(k)=\lbrack (9,3,3)^3;(3,3,3,3,3)\rbrack, \\
\langle 6561,i\rangle   & \text{ if } \tau_2(k)=\lbrack (3,3,3,3)^3;(3,3,3,3,3,3)\rbrack, \\
\langle 6561,109\rangle & \text{ if } \tau_2(k)=\lbrack (3,3,3,3)^3;(9,3,3,3,3)\rbrack, \\
\langle 6561,j\rangle   & \text{ if } \tau_2(k)=\lbrack (9,3,3)^3;(9,3,3,3,3)\rbrack,
\end{cases}
\end{equation}
with \(i\in\lbrace 103,105\rbrace\) and \(j\in\lbrace 110,111\rbrace\).
\end{theorem}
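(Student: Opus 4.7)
The plan is to adapt the \emph{strategy of pattern recognition via Artin transfers} that was already used in the proofs of Theorems \ref{thm:DistinguishedTower}, \ref{thm:FirstVariantTower} and \ref{thm:SecondVariantTower}. First, I would exchange the hypothesis $\varkappa(\mathrm{k})=(000;4)$ for the first-layer abelian type invariants $\tau(\mathrm{k})=\lbrack (9,3,3)^3;(3,3,3,3)\rbrack$ via Theorem \ref{thm:SimplifiedComputation}; this replacement is crucial because the invariants $\tau$ can be read off directly from a candidate 3-group by Magma without having to compute transfer kernels.

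Next, I would identify the metabelian root of the descendant tree in which the required Artin pattern first appears. Proposition \ref{prp:TotalTower} hands us the natural candidate $\langle 729,9\rangle$, and a Magma verification confirms that its abelianization is of type $(9,3)$, that its Artin pattern is precisely $\lbrack(000;4);(9,3,3)^3,(3,3,3,3)\rbrack$, and that $\mathrm{Aut}(Q)$ for $Q=G/\Phi(G)$ contains a subgroup isomorphic to $S_3=\langle 6,1\rangle$, so that the Galois action requirement of Theorem \ref{thm:RelationRankAndGaloisAction}(2) is met.

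The central task is then to enumerate the admissible candidates for $G_2$ among the descendants of $\langle 729,9\rangle$. By the \emph{antitony principle} $\varkappa(P)\ge\varkappa(D)$ and $\tau(P)\le\tau(D)$ for parent–descendant pairs $(P,D)$, every vertex $D$ which can serve as $G_2$ must still share $\varkappa=(000;4)$ and $\tau=\lbrack(9,3,3)^3;(3,3,3,3)\rbrack$ with the root. For each such vertex of order $3^n$ with $n\le 8$, one then computes the second-layer invariants $\tau_2$ (the $3$-class group type of the four unramified abelian nonic extensions of $\mathrm{k}$, via the sublattice in Figure \ref{FF2}) and filters by the two constraints of Theorem \ref{thm:RelationRankAndGaloisAction}: the relation rank inequality $2\le d_2\le 5$, which is easily verified to hold for all candidates of order $\le 6561$ since they even satisfy $d_2\le 4$, and the survival of an $S_3$-subgroup in $\mathrm{Aut}(Q)$, which eliminates the many siblings whose action degenerates to $C_3$ or $C_2$. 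The candidates that pass both filters are grouped according to the value of $\tau_2$ and reproduce exactly the six lines of equation (\ref{eqn:TotalTower}).

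The main obstacle is the sheer size of the descendant tree of $\langle 729,9\rangle$: in contrast to the sporadic situation of Theorem \ref{thm:FirstVariantTower} or the thin coclass tree of Theorem \ref{thm:SecondVariantTower}, here the root is a periodic coclass-$3$ group whose tree contains a wealth of metabelian $3$-groups of small order with rich branching. This is precisely why the statement is restricted to \emph{immediate} descendants of $\langle 729,9\rangle$ with $S_3$ action, giving the three orders $729$, $2187$, $6561$. The delicate step is to verify, for each of the six listed values of $\tau_2$, that the tabulated identifiers $\langle 2187,123\rangle$, $\langle 2187,124\rangle$, $\langle 6561,103\rangle$, $\langle 6561,105\rangle$, $\langle 6561,109\rangle$, $\langle 6561,110\rangle$, $\langle 6561,111\rangle$ exhaust the vertices compatible with the required Artin pattern and Galois action; beyond the qualitative argument above, this requires a systematic Magma computation, since no purely theoretical classification of the relevant branch of the tree of $\langle 729,9\rangle$ seems to be available.
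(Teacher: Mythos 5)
Your overall strategy --- pass from $\varkappa=(000;4)$ to $\tau=\lbrack(9,3,3)^3;(3,3,3,3)\rbrack$, take $\langle 729,9\rangle$ as root, and classify the \emph{immediate} descendants with an action by $S_3$ according to $\tau_2$ --- is exactly the paper's route. The paper makes one point you omit but which is worth stating: the root has nuclear rank $\nu=3$, so there are also $37$ step-size-$3$ children of order $3^9$, and these are eliminated because their abelian quotient invariants already exceed the threshold $\tau\ge\lbrack(9,9,3)^3;(3,3,3,3)\rbrack$; among the $15$ step-size-$1$ and $61$ step-size-$2$ children, only the two, respectively five, listed groups carry the $S_3$-action. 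That counting is the substance of the enumeration you correctly flag as requiring Magma.

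However, there is a genuine error in your use of the relation rank. You assert that the bound $2\le d_2\le 5$ ``is easily verified to hold for all candidates of order $\le 6561$ since they even satisfy $d_2\le 4$.'' That statement is borrowed from the harmonically balanced case and is false here: the paper computes that the groups $\langle 2187,123\rangle$, $\langle 6561,103\rangle$ and $\langle 6561,105\rangle$ have relation ranks $6\le d_2\le 7$, while the remaining five candidates have $4\le d_2\le 5$. If you actually applied the Shafarevich bound as a filter on candidates for $G_2$, you would wrongly discard three of the eight groups in equation \eqref{eqn:TotalTower}. The resolution is that the Shafarevich bound constrains the full tower group $G=\mathrm{Gal}(\mathrm{k}_3^{(\infty)}/\mathrm{k})$, of which $G_2=\mathrm{Gal}(\mathrm{k}_3^{(2)}/\mathrm{k})$ is only a quotient; a candidate for $G_2$ with $d_2>5$ is not excluded as second-stage group, it merely cannot coincide with $G$, forcing $\ell_3(\mathrm{k})\ge 3$. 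This is precisely the content of item (2) of Corollary \ref{cor:TotalTower}, so the relation rank enters the argument only there, not in the determination of the list of possible $G_2$.
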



\begin{corollary}
\label{cor:TotalTower}
For the fields \(\mathrm{k}\)
with total capitulation \(\varkappa(\mathrm{k})=(000;4)\)
in Theorem
\ref{thm:TotalTower},
the length of the \(3\)-class field tower \(\mathrm{k}_3^{(\infty)}\) is given by
\begin{enumerate}
\item
\(\ell_3(\mathrm{k})\ge 2\), if \(G_2\in\lbrace\langle 729,9\rangle,\langle 2187,124\rangle,
\langle 6561,109\rangle,\langle 6561,110\rangle,\langle 6561,111\rangle\rbrace\),
\item
\(\ell_3(\mathrm{k})\ge 3\), if \(G_2\in\lbrace\langle 2187,123\rangle,\langle 6561,103\rangle,\langle 6561,105\rangle\rbrace\).
\end{enumerate}
\end{corollary}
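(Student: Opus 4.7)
The plan rests on the standard dictionary between the derived series of the tower group $G=\mathrm{Gal}(\mathrm{k}_3^{(\infty)}/\mathrm{k})$ and the stages of the $3$-class field tower, namely $G^{(n)}=\mathrm{Gal}(\mathrm{k}_3^{(\infty)}/\mathrm{k}_3^{(n)})$ for $n\ge 0$, under which $G_2=G/G^{\prime\prime}$ and $\ell_3(\mathrm{k})$ coincides with the derived length of $G$. In particular, $\ell_3(\mathrm{k})\ge 2$ is equivalent to $G$ being non-abelian, and $\ell_3(\mathrm{k})\ge 3$ is equivalent to $G$ being non-metabelian.

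For item (1) I would argue directly: each of the five candidates $G_2$ has order at least $729$, while $\lvert C_{\mathrm{k},3}\rvert=27$, so $G_2^\prime\ne 1$ and $G_2$ is already non-abelian. Consequently $\mathrm{Gal}(\mathrm{k}_3^{(2)}/\mathrm{k}_3^{(1)})\simeq G_2^\prime$ is nontrivial, $\mathrm{k}_3^{(1)}\subsetneq\mathrm{k}_3^{(2)}$, and hence $\ell_3(\mathrm{k})\ge 2$.

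For item (2) it suffices to rule out the possibility $G=G_2$ for each of the three candidates $\langle 2187,123\rangle$, $\langle 6561,103\rangle$, $\langle 6561,105\rangle$; once this is done, $G$ must be a proper non-metabelian cover of $G_2$, so $G^{\prime\prime}\ne 1$, $\mathrm{k}_3^{(3)}\supsetneq\mathrm{k}_3^{(2)}$, and $\ell_3(\mathrm{k})\ge 3$. To effect the exclusion I would invoke the Shafarevich inequality from Theorem \ref{thm:RelationRankAndGaloisAction}(1), which forces the pro-$3$ relation rank of any candidate for $G$ to satisfy $d_2(G)\le 2+2+1=5$. Using the pc-presentations supplied by the SmallGroups database (in the spirit of Proposition \ref{prp:TotalTower}) together with Magma's facility for computing $H^2(-,\mathbb{F}_3)$, I would verify $d_2(G_2)=\dim_{\mathbb{F}_3}H^2(G_2,\mathbb{F}_3)>5$ for each of the three candidates, contradicting Shafarevich and thereby excluding $G_2$ as a possible tower group.

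The main obstacle is purely computational: the explicit determination of the Schur multipliers for the three metabelian $3$-groups of orders $2187$ and $6561$. Once those relation ranks are in hand, both inequalities in the corollary follow as formal consequences of the Shafarevich estimate and the derived-series/tower dictionary, with no further theoretical input required.
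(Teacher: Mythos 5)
Your proposal is correct and follows essentially the same route as the paper: item (1) is immediate from the non-abelianness of $G_2$, and item (2) is settled exactly as the paper does it, by checking that the relation ranks $d_2=\dim_{\mathbb{F}_3}H^2(G_2,\mathbb{F}_3)$ of the three candidates (which the paper records as $6\le d_2\le 7$) violate the Shafarevich bound $d_2\le 5$ of Theorem \ref{thm:RelationRankAndGaloisAction}, so that $G\neq G_2=G/G^{\prime\prime}$ and hence $G^{\prime\prime}\neq 1$. The only difference is cosmetic: the paper simply asserts the relation ranks, while you make explicit that they are obtained from the pc-presentations via a Schur-multiplier computation in Magma.
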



\begin{proof}
(Proof of Theorem
\ref{thm:TotalTower}
and Corollary
\ref{cor:TotalTower}) 
Since the root \(\langle 729,9\rangle\) has nuclear rank \(\nu=3\),
it has descendants of step sizes \(s\in\lbrace 1,2,3\rbrace\).
The \(37\) children with \(s=3\) are of order \(3^9=19683\)
and possess abelian quotient invariants beyond the threshold \(\tau\ge\lbrack (9,9,3)^3;(3,3,3,3)\rbrack\).
Among the \(15\), resp. \(61\), children with \(s=1\), resp. \(s=2\), and order \(3^7=2187\), resp. \(3^8=6561\),
only the two, resp. five, mentioned possess an action by \(S_3\).
Concerning the length of \(3\)-class field towers,
the groups \(G_2\) in item (1) of the corollary have relation ranks \(4\le d_2\le 5\),
thus admitting a two-stage tower,
whereas those in item (2) have \(6\le d_2\le 7\),
which definitely excludes \(\ell_3(\mathrm{k})=2\).
\end{proof}

The following Figures
\ref{fig:C9xC3},
\ref{fig:Ord729Id17},
\ref{fig:Ord729Id20},
\ref{fig:Ord729Id18}
and
\ref{fig:Ord729Id21}
illustrate the location in descendant trees of
all metabelian groups \(M\) and certain non-metabelian groups \(G\)
which occur in section \S\
\ref{s:Tower}.


\section{Computational results}
\label{s:Computations}

\noindent
Table
\ref{tbl:Experiments}
has been computed with the aid of the computational algebra system MAGMA
\cite{MAGMA}.
For each of the $95$ pure metacyclic fields $\mathrm{k}=\mathbb{Q}(\sqrt[3]{p},\zeta_3)$
with prime radicands $p\equiv 1\,(\mathrm{mod}\,9)$ in the range $0<p<20\,000$
and $3$-class group of type $(9,3)$,
the capitulation kernels $\ker(T_{K_{i,3}/\mathrm{k}})$ of the class extension homomorphisms
$T_{K_{i,3}/\mathrm{k}}:\,C_{\mathrm{k},3}\to C_{K_{i,3},3}$
were computed and collected in the transfer kernel type $\varkappa$.

An asterisk indicates the second variant of harmonically balanced capitulation \(\varkappa=(123;4)\)
with abelian type invariants \(\tau=\lbrack(27,3)^3;(9,9,3)\rbrack\).


\begin{figure}[hb]
\caption{Finite \(3\)-groups \(G\) with commutator quotient \(G/G^\prime\simeq C_9\times C_3\)}
\label{fig:C9xC3}



{\tiny

\setlength{\unitlength}{0.9cm}
\begin{picture}(14,9)(-9,-8)

\put(-10,0.5){\makebox(0,0)[cb]{order}}

\put(-10,0){\line(0,-1){6}}
\multiput(-10.1,0)(0,-2){4}{\line(1,0){0.2}}

\put(-10.2,0){\makebox(0,0)[rc]{\(27\)}}
\put(-9.8,0){\makebox(0,0)[lc]{\(3^3\)}}
\put(-10.2,-2){\makebox(0,0)[rc]{\(81\)}}
\put(-9.8,-2){\makebox(0,0)[lc]{\(3^4\)}}
\put(-10.2,-4){\makebox(0,0)[rc]{\(243\)}}
\put(-9.8,-4){\makebox(0,0)[lc]{\(3^5\)}}
\put(-10.2,-6){\makebox(0,0)[rc]{\(729\)}}
\put(-9.8,-6){\makebox(0,0)[lc]{\(3^6\)}}

\put(-10,-6){\vector(0,-1){2}}

\put(-1.1,-0.1){\framebox(0.2,0.2){}}


\multiput(-9,-4)(0.5,0){4}{\circle{0.2}}
\put(-9,-6){\circle{0.2}}
\put(-8.5,-6){\circle{0.2}}
\multiput(-8,-6)(0.5,0){2}{\circle{0.2}}
\multiput(-7,-4)(0.5,0){4}{\circle{0.2}}

\put(-5,-2){\circle{0.2}}
\put(4,-2){\circle*{0.2}}
\put(4,-4){\circle{0.2}}
\put(5,-2){\circle{0.2}}

\multiput(-4,-6)(0.5,0){1}{\circle*{0.2}}
\multiput(-3.5,-6)(0.5,0){3}{\circle{0.2}}
\multiput(-1,-6)(0.5,0){2}{\circle{0.2}}
\multiput(0,-6)(0.5,0){4}{\circle*{0.2}}
\multiput(2,-6)(0.5,0){1}{\circle{0.2}}
\multiput(2.5,-6)(0.5,0){2}{\circle{0.2}}


\multiput(-9,-4)(0.5,0){4}{\line(0,-1){2}}
\put(-9,-6){\vector(0,-1){2}}
\multiput(-8,-6)(0.5,0){2}{\vector(0,-1){2}}

\put(-5,-2){\line(-2,-1){4}}
\put(-5,-2){\line(-1,-4){0.5}}
\put(-5,-2){\line(1,-4){1}}
\put(-5,-2){\line(2,-1){8}}

\put(-1,0){\line(-2,-1){4}}
\put(-1,0){\line(5,-2){5}}
\put(-1,0){\line(3,-1){6}}

\put(4,-2){\line(0,-1){2}}

\multiput(-4,-6)(0.5,0){4}{\vector(0,-1){2}}
\multiput(-1,-6)(0.5,0){7}{\vector(0,-1){2}}


\put(-1.1,0.1){\makebox(0,0)[rb]{\(\langle 2\rangle\)}}

\put(-9,-3.9){\makebox(0,0)[cb]{\(\mathrm{E}\)}}
\put(-8.5,-3.9){\makebox(0,0)[cb]{\(\mathrm{H}\)}}
\put(-8,-3.9){\makebox(0,0)[cb]{\(\mathrm{G}\)}}
\put(-7.5,-3.9){\makebox(0,0)[cb]{\(\mathrm{A}\)}}

\put(-9,-4.1){\makebox(0,0)[rt]{\(\langle 13\rangle\)}}
\put(-9,-6.1){\makebox(0,0)[rt]{\(\langle 65\rangle\)}}
\put(-8.5,-4.4){\makebox(0,0)[rt]{\(\langle 14\rangle\)}}
\put(-8.5,-6.4){\makebox(0,0)[rt]{\(\langle 73\rangle\)}}
\put(-8,-4.1){\makebox(0,0)[rt]{\(\langle 15\rangle\)}}
\put(-8,-6.1){\makebox(0,0)[rt]{\(\langle 79\rangle\)}}
\put(-7.5,-4.4){\makebox(0,0)[rt]{\(\langle 17\rangle\)}}
\put(-7.5,-6.4){\makebox(0,0)[rt]{\(\langle 84\rangle\)}}
\put(-7,-4.1){\makebox(0,0)[ct]{\(\langle 18\rangle\)}}
\put(-6.5,-4.4){\makebox(0,0)[ct]{\(\langle 16\rangle\)}}
\put(-6,-4.1){\makebox(0,0)[ct]{\(\langle 19\rangle\)}}
\put(-5.5,-4.4){\makebox(0,0)[ct]{\(\langle 20\rangle\)}}

\put(-5.1,-1.9){\makebox(0,0)[rb]{\(\langle 3\rangle\)}}
\put(3.9,-2.1){\makebox(0,0)[rt]{\(\langle 4\rangle\)}}
\put(4,-4.1){\makebox(0,0)[ct]{\(\langle 22\rangle\)}}
\put(4.5,-4.1){\makebox(0,0)[ct]{}}
\put(3.9,-6.1){\makebox(0,0)[rt]{}}
\put(4.5,-6.1){\makebox(0,0)[ct]{}}
\put(5,-2.1){\makebox(0,0)[ct]{\(\langle 6\rangle\)}}

\put(-4,-6.1){\makebox(0,0)[rt]{\(\langle 9\rangle\)}}
\put(-3.5,-6.4){\makebox(0,0)[rt]{\(\langle 10\rangle\)}}
\put(-3,-6.1){\makebox(0,0)[rt]{\(\langle 11\rangle\)}}
\put(-2.5,-6.4){\makebox(0,0)[rt]{\(\langle 12\rangle\)}}
\put(-1,-6.1){\makebox(0,0)[rt]{\(\langle 16\rangle\)}}
\put(-0.5,-6.4){\makebox(0,0)[rt]{\(\langle 19\rangle\)}}
\put(0,-6.1){\makebox(0,0)[rt]{\(\langle 17\rangle\)}}
\put(0.5,-6.4){\makebox(0,0)[rt]{\(\langle 20\rangle\)}}
\put(1,-6.1){\makebox(0,0)[rt]{\(\langle 18\rangle\)}}
\put(1.5,-6.4){\makebox(0,0)[rt]{\(\langle 21\rangle\)}}
\put(2,-6.1){\makebox(0,0)[rt]{\(\langle 13\rangle\)}}
\put(2.5,-6.4){\makebox(0,0)[ct]{\(\langle 14\rangle\)}}
\put(3,-6.1){\makebox(0,0)[ct]{\(\langle 15\rangle\)}}


\put(-1,-0.4){\makebox(0,0)[cb]{\(\mathrm{a}\)}}
\put(-1,-0.5){\makebox(0,0)[ct]{\(1\)}}

\put(-9,-8.4){\makebox(0,0)[cb]{\(\mathrm{b}\)}}
\put(-9,-8.5){\makebox(0,0)[ct]{\(15\)}}
\put(-8.5,-8.4){\makebox(0,0)[cb]{\(\mathrm{b}\)}}
\put(-8.5,-8.5){\makebox(0,0)[ct]{\(15\)}}
\put(-8,-8.4){\makebox(0,0)[cb]{\(\mathrm{a}\)}}
\put(-8,-8.5){\makebox(0,0)[ct]{\(1\)}}
\put(-7.5,-8.4){\makebox(0,0)[cb]{\(\mathrm{a}\)}}
\put(-7.5,-8.5){\makebox(0,0)[ct]{\(1\)}}

\put(-7,-5){\makebox(0,0)[cb]{\(\mathrm{b}\)}}
\put(-7,-5.1){\makebox(0,0)[ct]{\(16\)}}
\put(-6.5,-5){\makebox(0,0)[cb]{\(\mathrm{b}\)}}
\put(-6.5,-5.1){\makebox(0,0)[ct]{\(2\)}}
\put(-6,-5){\makebox(0,0)[cb]{\(\mathrm{b}\)}}
\put(-6,-5.1){\makebox(0,0)[ct]{\(3\)}}
\put(-5.5,-5){\makebox(0,0)[cb]{\(\mathrm{b}\)}}
\put(-5.5,-5.1){\makebox(0,0)[ct]{\(3\)}}

\put(-5,-2.7){\makebox(0,0)[cb]{\(\mathrm{a}\)}}
\put(-5,-2.8){\makebox(0,0)[ct]{\(1\)}}

\put(4,-4.7){\makebox(0,0)[cb]{\(\mathrm{A}\)}}
\put(4,-4.8){\makebox(0,0)[ct]{\(20\)}}

\put(5,-2.7){\makebox(0,0)[cb]{\(\mathrm{A}\)}}
\put(5,-2.8){\makebox(0,0)[ct]{\(1\)}}

\put(-4,-8.4){\makebox(0,0)[cb]{\(\mathrm{b}\)}}
\put(-4,-8.5){\makebox(0,0)[ct]{\(15\)}}
\put(-3.5,-8.4){\makebox(0,0)[cb]{\(\mathrm{b}\)}}
\put(-3.5,-8.5){\makebox(0,0)[ct]{\(31\)}}
\put(-3,-8.4){\makebox(0,0)[cb]{\(\mathrm{c}\)}}
\put(-3,-8.5){\makebox(0,0)[ct]{\(27\)}}
\put(-2.5,-8.4){\makebox(0,0)[cb]{\(\mathrm{A}\)}}
\put(-2.5,-8.5){\makebox(0,0)[ct]{\(20\)}}

\put(-1,-8.4){\makebox(0,0)[cb]{\(\mathrm{B}\)}}
\put(-1,-8.5){\makebox(0,0)[ct]{\(7\)}}
\put(-0.5,-8.4){\makebox(0,0)[cb]{\(\mathrm{B}\)}}
\put(-0.5,-8.5){\makebox(0,0)[ct]{\(7\)}}
\put(0,-8.4){\makebox(0,0)[cb]{\(\mathrm{E}\)}}
\put(0,-8.5){\makebox(0,0)[ct]{\(12\)}}
\put(0.5,-8.4){\makebox(0,0)[cb]{\(\mathrm{E}\)}}
\put(0.5,-8.5){\makebox(0,0)[ct]{\(12\)}}
\put(1,-8.4){\makebox(0,0)[cb]{\(\mathrm{e}\)}}
\put(1,-8.5){\makebox(0,0)[ct]{\(14\)}}
\put(1.5,-8.4){\makebox(0,0)[cb]{\(\mathrm{e}\)}}
\put(1.5,-8.5){\makebox(0,0)[ct]{\(14\)}}
\put(2,-8.4){\makebox(0,0)[cb]{\(\mathrm{d}\)}}
\put(2,-8.5){\makebox(0,0)[ct]{\(10\)}}
\put(2.5,-8.4){\makebox(0,0)[cb]{\(\mathrm{D}\)}}
\put(2.5,-8.5){\makebox(0,0)[ct]{\(11\)}}
\put(3,-8.4){\makebox(0,0)[cb]{\(\mathrm{D}\)}}
\put(3,-8.5){\makebox(0,0)[ct]{\(11\)}}

\put(-7.5,-10.1){\makebox(0,0)[lc]{Legend:}}
\put(-6,-10.1){\makebox(0,0)[lc]{\(\square\) \(\ldots\) abelian}}
\put(-6,-10.5){\makebox(0,0)[lc]{\(\bullet\) \(\ldots\) metabelian with required \(\varkappa\), \(\tau\) and action by \(S_3\)}}
\put(-6,-10.9){\makebox(0,0)[lc]{\(\circ\) \(\ldots\) other metabelian}}

\end{picture}

}

\end{figure}
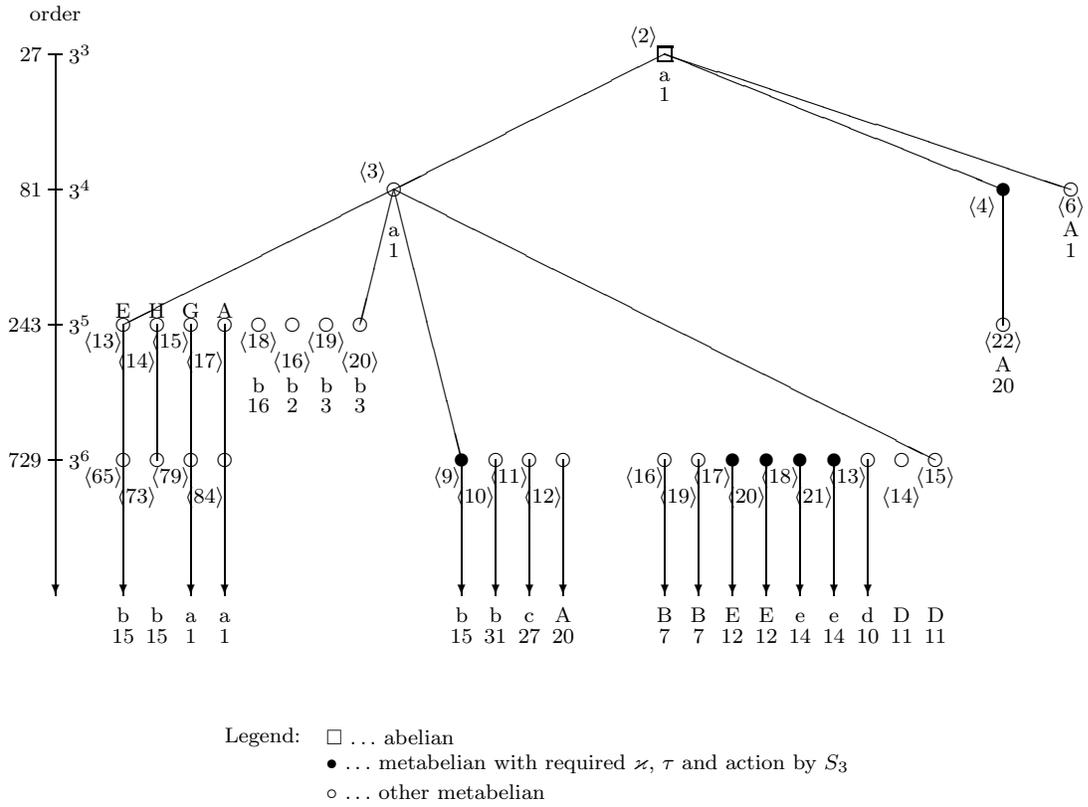

\newpage

\begin{figure}[ht]
\caption{Descendant tree of \(\langle 729,17\rangle\) with stable \(\varkappa=(123;4)\), \(\tau=\lbrack(27,3)^3;(9,3,3)\rbrack\)}
\label{fig:Ord729Id17}



{\tiny

\setlength{\unitlength}{0.8cm}
\begin{picture}(16,10)(-8,-15)

\put(-10,-5.5){\makebox(0,0)[cb]{order}}

\put(-10,-6){\line(0,-1){10}}
\multiput(-10.1,-6)(0,-2){6}{\line(1,0){0.2}}

\put(-10.2,-6){\makebox(0,0)[rc]{\(729\)}}
\put(-9.8,-6){\makebox(0,0)[lc]{\(3^6\)}}
\put(-10.2,-8){\makebox(0,0)[rc]{\(2\,187\)}}
\put(-9.8,-8){\makebox(0,0)[lc]{\(3^7\)}}
\put(-10.2,-10){\makebox(0,0)[rc]{\(6\,561\)}}
\put(-9.8,-10){\makebox(0,0)[lc]{\(3^8\)}}
\put(-10.2,-12){\makebox(0,0)[rc]{\(19\,683\)}}
\put(-9.8,-12){\makebox(0,0)[lc]{\(3^9\)}}
\put(-10.2,-14){\makebox(0,0)[rc]{\(59\,049\)}}
\put(-9.8,-14){\makebox(0,0)[lc]{\(3^{10}\)}}
\put(-10.2,-16){\makebox(0,0)[rc]{\(177\,147\)}}
\put(-9.8,-16){\makebox(0,0)[lc]{\(3^{11}\)}}

\put(-10,-16){\vector(0,-1){1}}

\put(-1.1,-5.9){\makebox(0,0)[lb]{\(\langle 17\rangle\)}}
\put(-1,-6){\circle*{0.2}}


\put(-5.6,-7.9){\makebox(0,0)[rb]{\(\langle 177\rangle\)}}
\put(-0.9,-7.9){\makebox(0,0)[lb]{\(\langle 178\rangle\)}}
\multiput(-5.5,-8)(4.5,0){2}{\circle*{0.2}}
\put(3.6,-7.9){\makebox(0,0)[lb]{\(\langle 179\rangle\)}}
\put(3.5,-8){\circle{0.2}}

\put(-6.1,-9.9){\makebox(0,0)[rb]{\(\langle 1731\rangle\)}}
\put(-6,-10){\circle*{0.2}}
\put(-4.9,-9.9){\makebox(0,0)[lb]{\(\langle 1732\rangle\)}}
\put(-5,-10){\circle{0.2}}

\put(-2.1,-9.9){\makebox(0,0)[rb]{\(\langle 1734\rangle\)}}
\put(-2,-10){\circle{0.2}}
\put(-1,-9.9){\makebox(0,0)[cb]{\(\langle 1733\rangle\)}}
\put(-1,-10){\circle*{0.2}}
\put(0.1,-9.9){\makebox(0,0)[lb]{\(\langle 1735\rangle\)}}
\put(-0.1,-10.1){\framebox(0.2,0.2){}}

\put(-6.6,-11.9){\makebox(0,0)[rb]{\(1\)}}
\put(-6.1,-11.9){\makebox(0,0)[rb]{\(2\)}}
\put(-5.6,-11.9){\makebox(0,0)[rb]{\(3\)}}
\multiput(-6.5,-12)(0.5,0){3}{\circle{0.2}}

\put(-3.1,-11.9){\makebox(0,0)[rb]{\(1\)}}
\put(-3,-12){\circle{0.2}}
\put(-2.1,-11.9){\makebox(0,0)[rb]{\(2\)}}
\put(-1.1,-11.9){\makebox(0,0)[rb]{\(3\)}}
\multiput(-2,-12)(1,0){2}{\circle*{0.2}}
\put(0.1,-11.9){\makebox(0,0)[lb]{\(4\)}}
\put(-0.1,-12.1){\framebox(0.2,0.2){\vrule height3pt width 3pt}}

\put(5.1,-11.8){\makebox(0,0)[lb]{\(1\)}}
\put(4.9,-12.1){\framebox(0.2,0.2){\vrule height3pt width 3pt}}
\put(7.1,-11.8){\makebox(0,0)[lb]{\(2\)}}
\put(6.9,-12.1){\framebox(0.2,0.2){}}

\multiput(-6.5,-14.2)(0.5,0){3}{\makebox(0,0)[ct]{\(1\)}}
\multiput(-6.6,-14.1)(0.5,0){3}{\framebox(0.2,0.2){}}

\multiput(-2,-14.2)(1,0){2}{\makebox(0,0)[ct]{\(1\)}}
\multiput(-2.1,-14.1)(1,0){2}{\framebox(0.2,0.2){}}

\put(4,-14.2){\makebox(0,0)[ct]{\(1\)}}
\put(3.9,-14.1){\framebox(0.2,0.2){}}
\put(5,-14.2){\makebox(0,0)[ct]{\(2\)}}
\put(6,-14.2){\makebox(0,0)[ct]{\(3\)}}
\multiput(4.9,-14.1)(1,0){2}{\framebox(0.2,0.2){\vrule height3pt width 3pt}}



\put(-1,-6){\line(-5,-2){4.5}}
\put(-1,-6){\line(0,-1){2}}
\put(-1,-6){\line(5,-2){4.5}}

\put(-5.5,-8){\line(-1,-4){0.5}}
\put(-5.5,-8){\line(1,-4){0.5}}

\put(-1,-8){\line(-1,-2){1}}
\put(-1,-8){\line(0,-1){2}}
\put(-1,-8){\line(1,-2){1}}

\put(-1,-8){\line(3,-2){6}}
\put(-1,-8){\line(2,-1){8}}

\put(-6,-10){\line(-1,-4){0.5}}
\put(-6,-10){\line(0,-1){2}}
\put(-6,-10){\line(1,-4){0.5}}

\put(-1,-10){\line(-1,-1){2}}
\put(-1,-10){\line(-1,-2){1}}
\put(-1,-10){\line(0,-1){2}}
\put(-1,-10){\line(1,-2){1}}

\multiput(-6.5,-12)(0.5,0){3}{\line(0,-1){2}}

\multiput(-2,-12)(1,0){2}{\line(0,-1){2}}

\put(5,-12){\line(-1,-2){1}}
\put(5,-12){\line(0,-1){2}}
\put(5,-12){\line(1,-2){1}}



\put(-7.5,-15.1){\makebox(0,0)[lc]{Legend:}}
\put(-6,-15.1){\makebox(0,0)[lc]{\(\bullet\) \(\ldots\) metabelian with action by \(S_3\)}}
\put(-6,-15.5){\makebox(0,0)[lc]{\(\circ\) \(\ldots\) other metabelian}}
\put(0,-15.2){\framebox(0.2,0.2){\vrule height3pt width 3pt}}
\put(0.3,-15.1){\makebox(0,0)[lc]{\(\ldots\) non-metabelian with action by \(S_3\)}}
\put(0,-15.5){\makebox(0,0)[lc]{\(\square\) \(\ldots\) other non-metabelian}}

\end{picture}

}

\end{figure}

\vskip 0.5in

\begin{figure}[hb]
\caption{Descendant tree of \(\langle 729,20\rangle\) with stable \(\varkappa=(132;4)\), \(\tau=\lbrack(27,3)^3;(9,3,3)\rbrack\)}
\label{fig:Ord729Id20}



{\tiny

\setlength{\unitlength}{0.8cm}
\begin{picture}(16,10)(-8,-15)

\put(-10,-5.5){\makebox(0,0)[cb]{order}}

\put(-10,-6){\line(0,-1){10}}
\multiput(-10.1,-6)(0,-2){6}{\line(1,0){0.2}}

\put(-10.2,-6){\makebox(0,0)[rc]{\(729\)}}
\put(-9.8,-6){\makebox(0,0)[lc]{\(3^6\)}}
\put(-10.2,-8){\makebox(0,0)[rc]{\(2\,187\)}}
\put(-9.8,-8){\makebox(0,0)[lc]{\(3^7\)}}
\put(-10.2,-10){\makebox(0,0)[rc]{\(6\,561\)}}
\put(-9.8,-10){\makebox(0,0)[lc]{\(3^8\)}}
\put(-10.2,-12){\makebox(0,0)[rc]{\(19\,683\)}}
\put(-9.8,-12){\makebox(0,0)[lc]{\(3^9\)}}
\put(-10.2,-14){\makebox(0,0)[rc]{\(59\,049\)}}
\put(-9.8,-14){\makebox(0,0)[lc]{\(3^{10}\)}}
\put(-10.2,-16){\makebox(0,0)[rc]{\(177\,147\)}}
\put(-9.8,-16){\makebox(0,0)[lc]{\(3^{11}\)}}

\put(-10,-16){\vector(0,-1){1}}

\put(-1.1,-5.9){\makebox(0,0)[lb]{\(\langle 20\rangle\)}}
\put(-1,-6){\circle*{0.2}}


\put(-5.6,-7.9){\makebox(0,0)[rb]{\(\langle 187\rangle\)}}
\put(-0.9,-7.9){\makebox(0,0)[lb]{\(\langle 188\rangle\)}}
\multiput(-5.5,-8)(4.5,0){2}{\circle*{0.2}}
\put(3.6,-7.9){\makebox(0,0)[lb]{\(\langle 189\rangle\)}}
\put(3.5,-8){\circle{0.2}}

\put(-6.1,-9.9){\makebox(0,0)[rb]{\(\langle 1769\rangle\)}}
\put(-6,-10){\circle*{0.2}}
\put(-4.9,-9.9){\makebox(0,0)[lb]{\(\langle 1770\rangle\)}}
\put(-5,-10){\circle{0.2}}

\put(-2.1,-9.9){\makebox(0,0)[rb]{\(\langle 1772\rangle\)}}
\put(-2,-10){\circle{0.2}}
\put(-1,-9.9){\makebox(0,0)[cb]{\(\langle 1771\rangle\)}}
\put(-1,-10){\circle*{0.2}}
\put(0.1,-9.9){\makebox(0,0)[lb]{\(\langle 1773\rangle\)}}
\put(-0.1,-10.1){\framebox(0.2,0.2){}}

\put(-6.6,-11.9){\makebox(0,0)[rb]{\(1\)}}
\put(-6.1,-11.9){\makebox(0,0)[rb]{\(2\)}}
\put(-5.6,-11.9){\makebox(0,0)[rb]{\(3\)}}
\multiput(-6.5,-12)(0.5,0){3}{\circle{0.2}}

\put(-3.1,-11.9){\makebox(0,0)[rb]{\(1\)}}
\put(-3,-12){\circle{0.2}}
\put(-2.1,-11.9){\makebox(0,0)[rb]{\(2\)}}
\put(-1.1,-11.9){\makebox(0,0)[rb]{\(3\)}}
\multiput(-2,-12)(1,0){2}{\circle*{0.2}}
\put(0.1,-11.9){\makebox(0,0)[lb]{\(4\)}}
\put(-0.1,-12.1){\framebox(0.2,0.2){\vrule height3pt width 3pt}}

\put(5.1,-11.8){\makebox(0,0)[lb]{\(1\)}}
\put(4.9,-12.1){\framebox(0.2,0.2){\vrule height3pt width 3pt}}
\put(7.1,-11.8){\makebox(0,0)[lb]{\(2\)}}
\put(6.9,-12.1){\framebox(0.2,0.2){}}

\multiput(-6.5,-14.2)(0.5,0){3}{\makebox(0,0)[ct]{\(1\)}}
\multiput(-6.6,-14.1)(0.5,0){3}{\framebox(0.2,0.2){}}

\multiput(-2,-14.2)(1,0){2}{\makebox(0,0)[ct]{\(1\)}}
\multiput(-2.1,-14.1)(1,0){2}{\framebox(0.2,0.2){}}

\put(4,-14.2){\makebox(0,0)[ct]{\(1\)}}
\put(3.9,-14.1){\framebox(0.2,0.2){}}
\put(5,-14.2){\makebox(0,0)[ct]{\(2\)}}
\put(6,-14.2){\makebox(0,0)[ct]{\(3\)}}
\multiput(4.9,-14.1)(1,0){2}{\framebox(0.2,0.2){\vrule height3pt width 3pt}}



\put(-1,-6){\line(-5,-2){4.5}}
\put(-1,-6){\line(0,-1){2}}
\put(-1,-6){\line(5,-2){4.5}}

\put(-5.5,-8){\line(-1,-4){0.5}}
\put(-5.5,-8){\line(1,-4){0.5}}

\put(-1,-8){\line(-1,-2){1}}
\put(-1,-8){\line(0,-1){2}}
\put(-1,-8){\line(1,-2){1}}

\put(-1,-8){\line(3,-2){6}}
\put(-1,-8){\line(2,-1){8}}

\put(-6,-10){\line(-1,-4){0.5}}
\put(-6,-10){\line(0,-1){2}}
\put(-6,-10){\line(1,-4){0.5}}

\put(-1,-10){\line(-1,-1){2}}
\put(-1,-10){\line(-1,-2){1}}
\put(-1,-10){\line(0,-1){2}}
\put(-1,-10){\line(1,-2){1}}

\multiput(-6.5,-12)(0.5,0){3}{\line(0,-1){2}}

\multiput(-2,-12)(1,0){2}{\line(0,-1){2}}

\put(5,-12){\line(-1,-2){1}}
\put(5,-12){\line(0,-1){2}}
\put(5,-12){\line(1,-2){1}}



\put(-7.5,-15.1){\makebox(0,0)[lc]{Legend:}}
\put(-6,-15.1){\makebox(0,0)[lc]{\(\bullet\) \(\ldots\) metabelian with action by \(S_3\)}}
\put(-6,-15.5){\makebox(0,0)[lc]{\(\circ\) \(\ldots\) other metabelian}}
\put(0,-15.2){\framebox(0.2,0.2){\vrule height3pt width 3pt}}
\put(0.3,-15.1){\makebox(0,0)[lc]{\(\ldots\) non-metabelian with action by \(S_3\)}}
\put(0,-15.5){\makebox(0,0)[lc]{\(\square\) \(\ldots\) other non-metabelian}}

\end{picture}

}

\end{figure}

\newpage

\begin{figure}[ht]
\caption{Descendant tree of \(\langle 729,18\rangle\)}
\label{fig:Ord729Id18}



{\tiny

\setlength{\unitlength}{0.8cm}
\begin{picture}(16,10)(-8,-15)


\put(-10,-5.5){\makebox(0,0)[cb]{order}}

\put(-10,-6){\line(0,-1){10}}
\multiput(-10.1,-6)(0,-2){6}{\line(1,0){0.2}}

\put(-10.2,-6){\makebox(0,0)[rc]{\(729\)}}
\put(-9.8,-6){\makebox(0,0)[lc]{\(3^6\)}}
\put(-10.2,-8){\makebox(0,0)[rc]{\(2\,187\)}}
\put(-9.8,-8){\makebox(0,0)[lc]{\(3^7\)}}
\put(-10.2,-10){\makebox(0,0)[rc]{\(6\,561\)}}
\put(-9.8,-10){\makebox(0,0)[lc]{\(3^8\)}}
\put(-10.2,-12){\makebox(0,0)[rc]{\(19\,683\)}}
\put(-9.8,-12){\makebox(0,0)[lc]{\(3^9\)}}
\put(-10.2,-14){\makebox(0,0)[rc]{\(59\,049\)}}
\put(-9.8,-14){\makebox(0,0)[lc]{\(3^{10}\)}}
\put(-10.2,-16){\makebox(0,0)[rc]{\(177\,147\)}}
\put(-9.8,-16){\makebox(0,0)[lc]{\(3^{11}\)}}

\put(-10,-16){\vector(0,-1){1}}


\put(-1.1,-5.9){\makebox(0,0)[lb]{\(\langle 18\rangle\)}}
\put(-1,-6){\circle{0.2}}


\put(-5.6,-7.9){\makebox(0,0)[rb]{\(\langle 180\rangle\)}}
\put(-5.5,-8){\circle*{0.2}}
\put(-0.9,-7.9){\makebox(0,0)[lb]{\(\langle 181\rangle\)}}
\put(3.6,-7.9){\makebox(0,0)[lb]{\(\langle 182\rangle\)}}
\multiput(-1,-8)(4.5,0){2}{\circle{0.2}}

\put(-7.1,-9.9){\makebox(0,0)[rb]{\(\langle 1737\rangle\)}}
\put(-6.1,-9.9){\makebox(0,0)[rb]{\(\langle 1738\rangle\)}}
\put(-5.1,-9.9){\makebox(0,0)[rb]{\(\langle 1739\rangle\)}}
\multiput(-7,-10)(1,0){3}{\circle*{0.2}}
\put(-4.1,-9.9){\makebox(0,0)[rb]{\(\langle 1740\rangle\)}}
\multiput(-4,-10)(1,0){1}{\circle{0.2}}
\put(-3.1,-10.1){\makebox(0,0)[rt]{\(\langle 1743\rangle\)}}
\put(-2.1,-10.1){\makebox(0,0)[rt]{\(\langle 1742\rangle\)}}
\put(-1,-10.1){\makebox(0,0)[ct]{\(\langle 1741\rangle\)}}
\put(0.1,-10.1){\makebox(0,0)[lt]{\(\langle 1744\rangle\)}}
\multiput(-3,-10)(1,0){4}{\circle{0.2}}
\put(1.1,-10.1){\makebox(0,0)[lt]{\(\langle 1745\rangle\)}}
\put(0.9,-10.1){\framebox(0.2,0.2){}}

\multiput(-7.6,-11.9)(1,0){3}{\makebox(0,0)[rb]{\(1\)}}
\multiput(-7.5,-12)(1,0){3}{\circle*{0.2}}
\multiput(-7.1,-11.9)(1,0){3}{\makebox(0,0)[rb]{\(2\)}}
\multiput(-7,-12)(1,0){3}{\circle{0.2}}
\put(4.9,-12.1){\makebox(0,0)[rt]{\(3\)}}
\put(5.5,-12.1){\makebox(0,0)[rt]{\(2\)}}
\put(6.1,-12.1){\makebox(0,0)[rt]{\(1\)}}
\put(6.7,-12.1){\makebox(0,0)[rt]{\(4\)}}
\multiput(4.9,-12.1)(0.6,0){4}{\framebox(0.2,0.2){}}

\multiput(-8.6,-13.9)(1.5,0){3}{\makebox(0,0)[rb]{\(1\)}}
\multiput(-8.1,-13.9)(1.5,0){3}{\makebox(0,0)[rb]{\(2\)}}
\multiput(-7.6,-13.9)(1.5,0){3}{\makebox(0,0)[rb]{\(3\)}}
\multiput(-8.5,-14)(0.5,0){9}{\circle{0.2}}

\multiput(-8.5,-16.2)(0.5,0){9}{\makebox(0,0)[ct]{\(1\)}}
\multiput(-8.6,-16.1)(0.5,0){9}{\framebox(0.2,0.2){}}



\put(-1,-6){\line(-5,-2){4.5}}
\put(-1,-6){\line(0,-1){2}}
\put(-1,-6){\line(5,-2){4.5}}

\put(-5.5,-7.2){\makebox(0,0)[cc]{\(\varkappa_4\)}}
\put(-0.9,-7.2){\makebox(0,0)[lc]{\(\varkappa_0\)}}
\put(3.5,-7.2){\makebox(0,0)[cc]{\(\varkappa_2\)}}

\put(-5.5,-8){\line(-3,-4){1.5}}
\put(-5.5,-8){\line(-1,-4){0.5}}
\put(-5.5,-8){\line(1,-4){0.5}}
\put(-5.5,-8){\line(3,-4){1.5}}

\put(-1,-8){\line(-1,-1){2}}
\put(-1,-8){\line(-1,-2){1}}
\put(-1,-8){\line(0,-1){2}}
\put(-1,-8){\line(1,-2){1}}
\put(-1,-8){\line(1,-1){2}}

\put(-3,-10.8){\makebox(0,0)[cc]{\(\varkappa_4\)}}
\put(-2,-10.8){\makebox(0,0)[cc]{\(\varkappa_4\)}}
\put(-1,-10.8){\makebox(0,0)[cc]{\(\varkappa_0\)}}
\put(0,-10.8){\makebox(0,0)[cc]{\(\varkappa_1\)}}
\put(1,-10.8){\makebox(0,0)[cc]{\(\varkappa_0\)}}

\put(-1,-8){\line(3,-2){6}}
\put(-1,-8){\line(2,-1){7.8}}

\put(5,-12.8){\makebox(0,0)[cc]{\(\varkappa_4\)}}
\put(5.6,-12.8){\makebox(0,0)[cc]{\(\varkappa_4\)}}
\put(6.2,-12.8){\makebox(0,0)[cc]{\(\varkappa_0\)}}
\put(6.8,-12.8){\makebox(0,0)[cc]{\(\varkappa_1\)}}

\multiput(-7,-10)(1,0){3}{\line(-1,-4){0.5}}
\multiput(-7,-10)(1,0){3}{\line(0,-1){2}}

\put(-7.5,-12){\line(-1,-2){1}}
\multiput(-7.5,-12)(1,0){2}{\line(-1,-4){0.5}}
\multiput(-7.5,-12)(1,0){3}{\line(0,-1){2}}
\multiput(-6.5,-12)(1,0){2}{\line(1,-4){0.5}}
\put(-5.5,-12){\line(1,-2){1}}

\multiput(-8.5,-14)(0.5,0){9}{\line(0,-1){2}}



\put(-2,-12.1){\makebox(0,0)[lc]{Legend:}}

\put(-2,-12.6){\makebox(0,0)[lc]{\(\tau=\lbrack(27,3),(27,3),(27,3);(9,9,3)\rbrack\)}}

\put(-2,-13){\makebox(0,0)[lc]{\(\varkappa_0=(132;0)\)}}
\put(-2,-13.4){\makebox(0,0)[lc]{\(\varkappa_4=(132;4)\)}}
\put(-2,-13.8){\makebox(0,0)[lc]{\(\varkappa_1=(132;1)\)}}
\put(-2,-14.2){\makebox(0,0)[lc]{\(\varkappa_2=(132;2)\)}}

\put(-2,-14.7){\makebox(0,0)[lc]{\(\bullet\) \(\ldots\) metabelian with \(\varkappa_4\), \(\tau\) and action by \(S_3\)}}
\put(-2,-15.1){\makebox(0,0)[lc]{\(\circ\) \(\ldots\) other metabelian}}
\put(-2,-15.5){\makebox(0,0)[lc]{\(\square\) \(\ldots\) non-metabelian}}

\end{picture}

}

\end{figure}

\vskip 0.5in

\begin{figure}[hb]
\caption{Descendant tree of \(\langle 729,21\rangle\)}
\label{fig:Ord729Id21}



{\tiny

\setlength{\unitlength}{0.8cm}
\begin{picture}(16,10)(-8,-15)


\put(-10,-5.5){\makebox(0,0)[cb]{order}}

\put(-10,-6){\line(0,-1){10}}
\multiput(-10.1,-6)(0,-2){6}{\line(1,0){0.2}}

\put(-10.2,-6){\makebox(0,0)[rc]{\(729\)}}
\put(-9.8,-6){\makebox(0,0)[lc]{\(3^6\)}}
\put(-10.2,-8){\makebox(0,0)[rc]{\(2\,187\)}}
\put(-9.8,-8){\makebox(0,0)[lc]{\(3^7\)}}
\put(-10.2,-10){\makebox(0,0)[rc]{\(6\,561\)}}
\put(-9.8,-10){\makebox(0,0)[lc]{\(3^8\)}}
\put(-10.2,-12){\makebox(0,0)[rc]{\(19\,683\)}}
\put(-9.8,-12){\makebox(0,0)[lc]{\(3^9\)}}
\put(-10.2,-14){\makebox(0,0)[rc]{\(59\,049\)}}
\put(-9.8,-14){\makebox(0,0)[lc]{\(3^{10}\)}}
\put(-10.2,-16){\makebox(0,0)[rc]{\(177\,147\)}}
\put(-9.8,-16){\makebox(0,0)[lc]{\(3^{11}\)}}

\put(-10,-16){\vector(0,-1){1}}


\put(-1.1,-5.9){\makebox(0,0)[lb]{\(\langle 21\rangle\)}}
\put(-1,-6){\circle{0.2}}


\put(-5.6,-7.9){\makebox(0,0)[rb]{\(\langle 190\rangle\)}}
\put(-5.5,-8){\circle*{0.2}}
\put(-0.9,-7.9){\makebox(0,0)[lb]{\(\langle 191\rangle\)}}
\put(3.6,-7.9){\makebox(0,0)[lb]{\(\langle 192\rangle\)}}
\multiput(-1,-8)(4.5,0){2}{\circle{0.2}}

\put(-7.1,-9.9){\makebox(0,0)[rb]{\(\langle 1775\rangle\)}}
\put(-6.1,-9.9){\makebox(0,0)[rb]{\(\langle 1776\rangle\)}}
\put(-5.1,-9.9){\makebox(0,0)[rb]{\(\langle 1777\rangle\)}}
\multiput(-7,-10)(1,0){3}{\circle*{0.2}}
\put(-4.1,-9.9){\makebox(0,0)[rb]{\(\langle 1778\rangle\)}}
\multiput(-4,-10)(1,0){1}{\circle{0.2}}
\put(-3.1,-10.1){\makebox(0,0)[rt]{\(\langle 1781\rangle\)}}
\put(-2.1,-10.1){\makebox(0,0)[rt]{\(\langle 1780\rangle\)}}
\put(-1,-10.1){\makebox(0,0)[ct]{\(\langle 1779\rangle\)}}
\put(0.1,-10.1){\makebox(0,0)[lt]{\(\langle 1782\rangle\)}}
\multiput(-3,-10)(1,0){4}{\circle{0.2}}
\put(1.1,-10.1){\makebox(0,0)[lt]{\(\langle 1783\rangle\)}}
\put(0.9,-10.1){\framebox(0.2,0.2){}}

\multiput(-7.6,-11.9)(1,0){3}{\makebox(0,0)[rb]{\(1\)}}
\multiput(-7.5,-12)(1,0){3}{\circle*{0.2}}
\multiput(-7.1,-11.9)(1,0){3}{\makebox(0,0)[rb]{\(2\)}}
\multiput(-7,-12)(1,0){3}{\circle{0.2}}
\put(4.9,-12.1){\makebox(0,0)[rt]{\(3\)}}
\put(5.5,-12.1){\makebox(0,0)[rt]{\(2\)}}
\put(6.1,-12.1){\makebox(0,0)[rt]{\(1\)}}
\put(6.7,-12.1){\makebox(0,0)[rt]{\(4\)}}
\multiput(4.9,-12.1)(0.6,0){4}{\framebox(0.2,0.2){}}

\multiput(-8.6,-13.9)(1.5,0){3}{\makebox(0,0)[rb]{\(1\)}}
\multiput(-8.1,-13.9)(1.5,0){3}{\makebox(0,0)[rb]{\(2\)}}
\multiput(-7.6,-13.9)(1.5,0){3}{\makebox(0,0)[rb]{\(3\)}}
\multiput(-8.5,-14)(0.5,0){9}{\circle{0.2}}

\multiput(-8.5,-16.2)(0.5,0){9}{\makebox(0,0)[ct]{\(1\)}}
\multiput(-8.6,-16.1)(0.5,0){9}{\framebox(0.2,0.2){}}



\put(-1,-6){\line(-5,-2){4.5}}
\put(-1,-6){\line(0,-1){2}}
\put(-1,-6){\line(5,-2){4.5}}

\put(-5.5,-7.2){\makebox(0,0)[cc]{\(\varkappa_4\)}}
\put(-0.9,-7.2){\makebox(0,0)[lc]{\(\varkappa_0\)}}
\put(3.5,-7.2){\makebox(0,0)[cc]{\(\varkappa_2\)}}

\put(-5.5,-8){\line(-3,-4){1.5}}
\put(-5.5,-8){\line(-1,-4){0.5}}
\put(-5.5,-8){\line(1,-4){0.5}}
\put(-5.5,-8){\line(3,-4){1.5}}

\put(-1,-8){\line(-1,-1){2}}
\put(-1,-8){\line(-1,-2){1}}
\put(-1,-8){\line(0,-1){2}}
\put(-1,-8){\line(1,-2){1}}
\put(-1,-8){\line(1,-1){2}}

\put(-3,-10.8){\makebox(0,0)[cc]{\(\varkappa_4\)}}
\put(-2,-10.8){\makebox(0,0)[cc]{\(\varkappa_4\)}}
\put(-1,-10.8){\makebox(0,0)[cc]{\(\varkappa_0\)}}
\put(0,-10.8){\makebox(0,0)[cc]{\(\varkappa_1\)}}
\put(1,-10.8){\makebox(0,0)[cc]{\(\varkappa_0\)}}

\put(-1,-8){\line(3,-2){6}}
\put(-1,-8){\line(2,-1){7.8}}

\put(5,-12.8){\makebox(0,0)[cc]{\(\varkappa_4\)}}
\put(5.6,-12.8){\makebox(0,0)[cc]{\(\varkappa_4\)}}
\put(6.2,-12.8){\makebox(0,0)[cc]{\(\varkappa_0\)}}
\put(6.8,-12.8){\makebox(0,0)[cc]{\(\varkappa_1\)}}

\multiput(-7,-10)(1,0){3}{\line(-1,-4){0.5}}
\multiput(-7,-10)(1,0){3}{\line(0,-1){2}}

\put(-7.5,-12){\line(-1,-2){1}}
\multiput(-7.5,-12)(1,0){2}{\line(-1,-4){0.5}}
\multiput(-7.5,-12)(1,0){3}{\line(0,-1){2}}
\multiput(-6.5,-12)(1,0){2}{\line(1,-4){0.5}}
\put(-5.5,-12){\line(1,-2){1}}

\multiput(-8.5,-14)(0.5,0){9}{\line(0,-1){2}}



\put(-2,-12.1){\makebox(0,0)[lc]{Legend:}}

\put(-2,-12.6){\makebox(0,0)[lc]{\(\tau=\lbrack(27,3),(27,3),(27,3);(9,9,3)\rbrack\)}}

\put(-2,-13){\makebox(0,0)[lc]{\(\varkappa_0=(123;0)\)}}
\put(-2,-13.4){\makebox(0,0)[lc]{\(\varkappa_4=(123;4)\)}}
\put(-2,-13.8){\makebox(0,0)[lc]{\(\varkappa_1=(123;1)\)}}
\put(-2,-14.2){\makebox(0,0)[lc]{\(\varkappa_2=(123;2)\)}}

\put(-2,-14.7){\makebox(0,0)[lc]{\(\bullet\) \(\ldots\) metabelian with \(\varkappa_4\), \(\tau\) and action by \(S_3\)}}
\put(-2,-15.1){\makebox(0,0)[lc]{\(\circ\) \(\ldots\) other metabelian}}
\put(-2,-15.5){\makebox(0,0)[lc]{\(\square\) \(\ldots\) non-metabelian}}

\end{picture}

}

\end{figure}

\newpage

The following distribution of capitulation types \(\varkappa\) arises in Table
\ref{tbl:Experiments}:
\begin{enumerate}
\item
\(61\) (\(64\%\)) with \(\varkappa=(444;4)\) (distinguished capitulation),
\item
\(14\) (\(15\%\)) with \(\varkappa=(123;4)\), \(\tau=\lbrack (27,3),(27,3),(27,3);(9,3,3)\rbrack\) (\(1^{\text{st}}\) variant),
\item
\(14\) (\(15\%\)) with \(\varkappa=(123;4*)\), \(\tau=\lbrack (27,3),(27,3),(27,3);(9,9,3)\rbrack\) (\(2^{\text{nd}}\) variant),
\item
\(6\) (\(6\%\)) with \(\varkappa=(000;4)\) (total capitulation).
\end{enumerate} 

\renewcommand{\arraystretch}{1.1}

\begin{table}[ht]
\caption{Capitulation types \(\varkappa\) of \(\mathrm{k}=\mathbb{Q}(\sqrt[3]{p},\zeta_3)\) in the range \(p<20\,000\)}
\label{tbl:Experiments}
\begin{center}
\begin{tabular}{|rrc||rrc||rrc||rrc|}
\hline
 No.  & \(p\) & \(\varkappa\) & No.  & \(p\) & \(\varkappa\) & No.   & \(p\) & \(\varkappa\) & No.   & \(p\) & \(\varkappa\) \\
\hline
  \(1\) &  \(199\) & \(4444\) & \(25\) & \(4951\) & \(4444\) & \(49\) &  \(9829\) & \(4444\) & \(73\) & \(14293\) & \(4444\) \\
  \(2\) &  \(271\) & \(4444\) & \(26\) & \(5059\) & \(4444\) & \(50\) & \(10243\) & \(4444\) & \(74\) & \(14419\) & \(4444\) \\
  \(3\) &  \(487\) & \(4444\) & \(27\) & \(5077\) & \(1234*\)& \(51\) & \(10459\) & \(0004\) & \(75\) & \(14563\) & \(4444\) \\
  \(4\) &  \(523\) & \(4444\) & \(28\) & \(5347\) & \(4444\) & \(52\) & \(10531\) & \(1234*\)& \(76\) & \(14779\) & \(4444\) \\
  \(5\) & \(1297\) & \(1234*\)& \(29\) & \(5437\) & \(1234\) & \(53\) & \(10657\) & \(4444\) & \(77\) & \(14923\) & \(4444\) \\
  \(6\) & \(1621\) & \(4444\) & \(30\) & \(5527\) & \(1234*\)& \(54\) & \(10837\) & \(0004\) & \(78\) & \(15121\) & \(0004\) \\
  \(7\) & \(1693\) & \(4444\) & \(31\) & \(5851\) & \(4444\) & \(55\) & \(10909\) & \(4444\) & \(79\) & \(15319\) & \(4444\) \\
  \(8\) & \(1747\) & \(1234\) & \(32\) & \(6067\) & \(4444\) & \(56\) & \(11251\) & \(4444\) & \(80\) & \(15427\) & \(4444\) \\
  \(9\) & \(1999\) & \(4444\) & \(33\) & \(6247\) & \(4444\) & \(57\) & \(11287\) & \(4444\) & \(81\) & \(16381\) & \(1234*\)\\
 \(10\) & \(2017\) & \(4444\) & \(34\) & \(6481\) & \(1234\) & \(58\) & \(11467\) & \(4444\) & \(82\) & \(16417\) & \(4444\) \\
 \(11\) & \(2143\) & \(1234*\)& \(35\) & \(6949\) & \(4444\) & \(59\) & \(11503\) & \(4444\) & \(83\) & \(16633\) & \(4444\) \\
 \(12\) & \(2377\) & \(4444\) & \(36\) & \(7219\) & \(0004\) & \(60\) & \(11593\) & \(4444\) & \(84\) & \(16993\) & \(1234*\)\\
 \(13\) & \(2467\) & \(1234\) & \(37\) & \(7507\) & \(1234*\)& \(61\) & \(11701\) & \(4444\) & \(85\) & \(17137\) & \(1234\) \\
 \(14\) & \(2593\) & \(1234\) & \(38\) & \(7687\) & \(4444\) & \(62\) & \(11719\) & \(4444\) & \(86\) & \(17209\) & \(1234*\)\\
 \(15\) & \(2917\) & \(4444\) & \(39\) & \(8011\) & \(1234\) & \(63\) & \(12097\) & \(4444\) & \(87\) & \(17497\) & \(1234*\)\\
 \(16\) & \(3511\) & \(4444\) & \(40\) & \(8209\) & \(4444\) & \(64\) & \(12511\) & \(1234\) & \(88\) & \(17569\) & \(4444\) \\
 \(17\) & \(3673\) & \(4444\) & \(41\) & \(8677\) & \(1234\) & \(65\) & \(12637\) & \(4444\) & \(89\) & \(18379\) & \(0004\) \\
 \(18\) & \(3727\) & \(4444\) & \(42\) & \(8821\) & \(4444\) & \(66\) & \(12853\) & \(4444\) & \(90\) & \(18451\) & \(1234\) \\
 \(19\) & \(3907\) & \(4444\) & \(43\) & \(9001\) & \(4444\) & \(67\) & \(12907\) & \(0004\) & \(91\) & \(18523\) & \(4444\) \\
 \(20\) & \(4159\) & \(4444\) & \(44\) & \(9109\) & \(4444\) & \(68\) & \(13159\) & \(4444\) & \(92\) & \(18541\) & \(4444\) \\
 \(21\) & \(4519\) & \(1234*\)& \(45\) & \(9343\) & \(1234\) & \(69\) & \(13177\) & \(4444\) & \(93\) & \(19387\) & \(1234\) \\
 \(22\) & \(4591\) & \(4444\) & \(46\) & \(9613\) & \(1234*\)& \(70\) & \(13339\) & \(4444\) & \(94\) & \(19441\) & \(1234*\)\\
 \(23\) & \(4789\) & \(1234\) & \(47\) & \(9631\) & \(4444\) & \(71\) & \(13411\) & \(4444\) & \(95\) & \(19927\) & \(1234*\)\\
 \(24\) & \(4933\) & \(4444\) & \(48\) & \(9721\) & \(4444\) & \(72\) & \(13807\) & \(1234\) &   \(\) &      \(\) &     \(\) \\
\hline
\end{tabular}
\end{center}
\end{table}


\section{Acknowledgements}
\label{s:Thanks}

\noindent
The third author gratefully acknowledges financial research subventions by the Austrian Science Fund (FWF): projects J0497-PHY and P26008-N25.



\begin{quote}
Siham AOUISSI \\
\'Ecole Normale Sup\'erieure, Department of Sciences, \\
Algebraic Theory and Application Research Group (ATA), \\
Moulay Ismail University, \\
B.P. 3104, Toulal, Mekn\`es - Morocco, \\
aouissi.siham@gmail.com.\\
\\
Mohamed TALBI \\
Regional Center of Professions of Education and Training, \\
60000 Oujda - Morocco, \\
ksirat1971@gmail.com. \\
\\
Daniel C. MAYER \\
Naglergasse 53, \\
8010 Graz - Austria, \\
quantum.algebra@icloud.com.\\
\\
Moulay Chrif ISMAILI \\
Department of Mathematics,
Mohammed First University, \\
60000 Oujda - Morocco, \\
mcismaili@yahoo.fr.  
\end{quote}


\end{document}